\newtheorem{thm}[equation]{Theorem}
\newtheorem{lem}[equation]{Lemma}
\theoremstyle{definition}
\newtheorem{defn}[equation]{Definition}
\newtheorem{rmk}[equation]{Remark}
\numberwithin{equation}{section}
\newcommand\abs[2][empty]{\csname#1\endcsname \lvert{#2}\csname#1\endcsname\rvert}
\newcommand\doublebar[2][empty]{\csname#1\endcsname \lVert{#2}\csname#1\endcsname\rVert}
\newcommand\mat[1]{\boldsymbol{#1}}
\newcommand\arr[1]{\boldsymbol{\dot{#1}}}
\newcommand\Xint[1]{\mathchoice
{\XXint\displaystyle\textstyle{#1}}%
{\XXint\textstyle\scriptstyle{#1}}%
{\XXint\scriptstyle\scriptscriptstyle{#1}}%
{\XXint\scriptscriptstyle\scriptscriptstyle{#1}}%
\!\int}
\newcommand\XXint[3]{{\setbox0=\hbox{$#1{#2#3}{\int}$ }
\vcenter{\hbox{$#2#3$ }}\kern-.6\wd0}}
\newcommand\fint{\Xint-}
\newcommand\dist{\mathop{\mathrm{dist}}\nolimits}
\newcommand\Div{\mathop{\mathrm{div}}\nolimits}
\newcommand\Tr{\mathop{\smash{\arr{\mathrm{Tr}}}\vphantom{T}}\nolimits}
\newcommand\Trace{\mathop{\mathrm{Tr}}\nolimits}
\newcommand\M{\mathop{\smash{\arr{\mathrm{M}}}\vphantom{M}}\nolimits}
\newcommand\supp{\mathop{\mathrm{supp}}\nolimits}
\newcommand\diam{\mathop{\mathrm{diam}}\nolimits}
\newcommand\re{\mathop{\mathrm{Re}}\nolimits}
 \let\R\RR
\newcommand\N{\mathbb{N}} 
\newcommand\1{\mathbf{1}}
\newcommand\D{\mathcal{D}}
\newcommand\s{\mathcal{S}}
\newcommand\pureH{\parallel}
\newcommand\dmn{{n+1}}
\newcommand\pdmn{{(n+1)}}
\newcommand\dmnMinusOne{n}
\begin{document}

\title[Rough bounds on layer potentials]%
{Bounds on layer potentials with rough inputs for higher order elliptic equations}

\author{Ariel Barton}
\address{Ariel Barton, Department of Mathematical Sciences,
			309 SCEN,
			University of Ar\-kan\-sas,
			Fayetteville, AR 72701}
\email{aeb019@uark.edu}

\author{Steve Hofmann}
\address{Steve Hofmann, 202 Math Sciences Bldg., University of Missouri, Columbia, MO 65211}
\email{hofmanns@missouri.edu}
\thanks{Steve Hofmann is partially supported by the NSF grant DMS-1664047.}

\author{Svitlana Mayboroda}
\address{Svitlana Mayboroda, Department of Mathematics, University of Minnesota, Minneapolis, Minnesota 55455}
\email{svitlana@math.umn.edu}
\thanks{Svitlana Mayboroda is partially supported by the NSF CAREER Awards DMS 1056004 and 1220089, the NSF INSPIRE Award DMS 1344235, the NSF Materials Research Science and Engineering Center Seed Grant,  the NSF RAISE-TAQ grant DMS 1839077, the Simons Fellowship, and the Simons Foundation grant 563916, SM}

\begin{abstract}
In this paper we establish square-function estimates on the double and single layer potentials with rough inputs for divergence form elliptic operators, of arbitrary even order $2m$, with variable $t$-independent coefficients in the upper half-space.
\end{abstract}

\keywords{Elliptic equation, higher-order differential equation}

\subjclass[2010]{Primary
35J30, 
Secondary
31B10
}

\maketitle

\tableofcontents

\section{Introduction}

This paper is part of an ongoing study of elliptic differential operators of the form
\begin{equation}\label{A:eqn:divergence}
Lu = (-1)^m \sum_{\abs{\alpha}=\abs{\beta}=m} \partial^\alpha (A_{\alpha\beta} \partial^\beta u)\end{equation}
for $m\geq 2$, with general bounded measurable coefficients.

Specifically, we hope to study boundary value problems such as the Dirichlet problem
\begin{equation}\label{A:eqn:introduction:Dirichlet}
Lu =0 \text{ in }\Omega,
\qquad
\nabla^{m-1} u= \arr f\text{ on }\partial\Omega\end{equation}
with boundary data $\arr f\in L^2(\partial\Omega)$ or in the boundary Sobolev space $\dot W^2_1(\partial\Omega)$. We are also interested in the higher order Neumann problem. We remark that in the second order case ($m=1$), there are many known results concerning these boundary value problems, while in the higher order case, there are very few known results for variable coefficients.

\subsection{Layer potentials in the second order case}

Classic tools for solving second order boundary value problems are the double and single layer potentials, given by
\begin{align}
\label{A:eqn:introduction:D}
\D^{\mat A}_\Omega f(X) &= \int_{\partial\Omega} \overline{\nu\cdot \mat A^*(Y)\nabla_{Y} E^{L^*}(Y,X)} \, f(Y)\,d\sigma(Y)
,\\
\label{A:eqn:introduction:S}
\s^L_\Omega g(X) &= \int_{\partial\Omega}E^{L}(X,Y) \, g(Y)\,d\sigma(Y)
\end{align}
where $\nu$ is the unit outward normal to~$\Omega$ and where $E^L(X,Y)$ is the fundamental solution for the operator~$L$.
Layer potentials are suggested by the Green's formula: if $Lu=0$ in $\Omega$ and $u\in \dot W^2_1(\Omega)$, for some second-order operator~$L$, then
\begin{equation}
\label{A:eqn:green:introduction}
u(X)=-\D^{\mat A}(u\big\vert_{\partial\Omega})+\s^L(\nu\cdot \mat A\nabla u)\quad\text{for all $X\in\Omega$}
\end{equation}
where $u\big\vert_{\partial\Omega}$ and $\nu\cdot \mat A\nabla u$ are the Dirichlet and Neumann boundary values of~$u$.

It is possible, though somewhat involved, to generalize formulas~\eqref{A:eqn:introduction:D} and~\eqref{A:eqn:introduction:S} to the higher order case, and multiple subtly different generalizations exist. We will use the potentials introduced in \cite{BarHM17}; these potentials are similar to but subtly different from those used in \cite{CohG83,Ver05,MitM13B} to study the biharmonic operator $\Delta^2$ and in \cite{Agm57,MitM13A} to study more general constant coefficient operators.

There are many ways to use layer potentials to study boundary value problems; see \cite{FabJR78,Ver84,DahK87,FabMM98,Zan00} in the case of harmonic functions (that is, the case $\mat A=\mat I$ and $L=-\Delta$) and \cite{KenR09,Rul07,Agr09,AlfAAHK11,Bar13,BarM16A,Ros13,AusM14,HofKMP15B,HofMayMou15,HofMitMor15} in the case of more general second order problems.
In particular, the second-order double and single layer potentials have been used to study higher-order differential equations in \cite{PipV92,BarM13}. In many cases an important first step is to establish boundedness of layer potentials in suitable function spaces; indeed the extensive use of harmonic layer potentials in Lipschitz domains began with the boundedness of the Cauchy integral on a Lipschitz curve \cite{CoiMM82}, which implies boundedness of layer potentials with $L^2$ inputs, and much recent work has begun with estimates on layer potentials with more general coefficents.

\subsection{New bounds on layer potentials: the main results of this paper}

The main results of this paper are the following bounds on layer potentials. The novelty of this result is the case $m\geq 2$; the case $m=1$ was established in \cite[Theorem~12.7]{AusS16}.
\begin{thm}\label{A:thm:square:rough}
Suppose that $L$ is an operator of the form~\eqref{A:eqn:divergence}, of order~$2m$, $m\geq 1$, acting on functions defined in $\R^\dmn$, $\dmnMinusOne\geq 1$, associated with coefficients $\mat A$ that satisfy the condition
\begin{equation}\label{A:eqn:t-independent}\mat A(x,t)=\mat A(x,s)=\mat A(x) \quad\text{for all $x\in\R^n$ and all $s$, $t\in\R$}\end{equation}
and the ellipticity conditions \eqref{A:eqn:elliptic} and~\eqref{A:eqn:elliptic:bounded}.

Then the double layer potential $\D^{\mat A}$ in the half-space, as defined by formula~\eqref{A:dfn:D:newton}, satisfies the bound
\begin{align}
\label{A:eqn:D:square:rough:intro}
\int_{\R^n}\int_{-\infty}^\infty \abs{\nabla^m \D^{\mat A} \arr f(x,t)}^2\,\abs{t}\,dt\,dx
	& \leq C \doublebar{\arr f}_{L^2(\R^n)}^2
\end{align}
for all $\arr f = \nabla^{m-1}\varphi\big\vert_{\partial\R^\dmn_+}$ for some $\varphi\in C^\infty_0(\R^\dmn)$.

Furthermore, the single layer potential $\s^L$ defined by formula~\eqref{A:dfn:S:newton} extends to an operator that satisfies the bound
\begin{align}
\label{A:eqn:S:square:rough:intro}
\int_{\R^n}\int_{-\infty}^\infty \abs{\nabla^m \s^L \arr g(x,t)}^2\,\abs{t}\,dt\,dx
	&\leq C \doublebar{\arr g}_{\dot W^2_{-1}(\R^n)}^2
\end{align}
for all $\arr g$ in the negative smoothness space $\dot W^2_{-1}(\R^n)$ (that is, for any array $\arr g$ of bounded operators on $\dot W^2_1(\R^n)$).

Here $C$ depends only on the dimension $\dmn$ and the ellipticity constants $\lambda$ and $\Lambda$ in the bounds \eqref{A:eqn:elliptic} and~\eqref{A:eqn:elliptic:bounded}.
\end{thm}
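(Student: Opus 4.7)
The plan is to reduce both square-function estimates to a statement of bounded $H^\infty$ holomorphic functional calculus for a first-order bisectorial operator $DB$ on $L^2(\R^n)$, exploiting $t$-independence to decouple the normal direction from the tangential ones.

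For the double layer bound \myeqref{eqn:D:square:rough:intro}, set $u = \D^{\mat A}\arr f$ and $F(x,t) = \nabla^m u(x,t)$, viewed as an array indexed by multi-indices of length $m$. Because $Lu=0$ in each open half-space and $\mat A$ is $t$-independent, a first-order system $\partial_t F = -DB\,F$ holds in each half-space, where $D$ is a constant-coefficient first-order differential operator on $\R^n$ (acting on arrays) and $B$ is a bounded multiplication encoding $\mat A$, provided one restricts to the curl-free subspace of arrays (so that $F$ genuinely equals $\nabla^m$ of a function). On that subspace $DB$ is bisectorial and, by the higher-order analogue of the Auscher--Axelsson--Hofmann--McIntosh theory, admits a bounded $H^\infty$ calculus on $L^2(\R^n)$, yielding the square-function estimate $\int_0^\infty \doublebar{\partial_t e^{-t|DB|}F_0}_{L^2(\R^n)}^2 \, t\,dt \lesssim \doublebar{F_0}_{L^2(\R^n)}^2$. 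Applied to the boundary values $F|_{t=0^\pm}$, combined with the $L^2$-trace bound $\doublebar{F|_{t=0^\pm}}_{L^2(\R^n)} \lesssim \doublebar{\arr f}_{L^2(\R^n)}$ (which is available since $\arr f$ has the smooth form $\nabla^{m-1}\varphi|_{\partial\R^\dmn_+}$, and follows from a local $Tb$ argument together with the higher-order Caccioppoli inequality), this delivers \myeqref{eqn:D:square:rough:intro}.

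For the single layer bound \myeqref{eqn:S:square:rough:intro}, argue by density. Arrays of the form $\arr g = \sum_j \partial_j \arr h_j$ with $\arr h_j \in L^2(\R^n)$ are dense in $\dot W^2_{-1}(\R^n)$, with $\doublebar{\arr g}_{\dot W^2_{-1}(\R^n)} \approx \inf\sum_j\doublebar{\arr h_j}_{L^2(\R^n)}$. For such $\arr g$, an integration by parts in \myeqref{eqn:S:fundamental} transfers the tangential derivatives off $\arr g$ and onto $E^L$; using $t$-independence of $\mat A$ to convert these tangential derivatives of $E^L$ into normal derivatives (modulo a commutator controlled by the adjoint bound for $\D^{\mat A^*}$) rewrites $\nabla^m \s^L\arr g$ in a form to which the double-layer bound from the previous paragraph applies. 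The general case follows by density once the operator norm is controlled.

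The main obstacle is the algebraic first-order reduction in the higher-order setting. For $m=1$ the array $\nabla u$ lives in $\R^\dmn$ with no constraints, and the corresponding operator $D$ and perturbation $B$ are by now well understood; for $m\geq 2$, $\nabla^m u$ is an array with many redundant components (since mixed partials commute), and one must realize $D$ as a self-adjoint operator on the closed subspace of curl-free $m$-arrays while verifying that $B$ is accretive on that subspace, with ellipticity constants depending only on $\lambda$, $\Lambda$, $n$, and $m$. Once this algebraic setup is in place, the $H^\infty$ calculus, and with it the square-function bound, follows by essentially the same arguments as in the second-order theory, and in particular without invoking any pointwise regularity theory (such as De Giorgi--Nash) that is unavailable at this level of generality.
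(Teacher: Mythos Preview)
Your approach via a first-order $DB$ formalism and $H^\infty$ functional calculus is genuinely different from the paper's. The paper does not set up any first-order system; instead it first establishes the modified single layer bound \eqref{A:eqn:S:square:variant:intro} (Theorem~\ref{A:thm:S:square:variant}) by a $T1$-type decomposition into three pieces (Lemmas~\ref{A:lem:R:1}, \ref{A:lem:R:2}, \ref{A:lem:S:carleson:preliminary}), the last of which invokes the higher-order Kato test functions of \cite{AusHMT01}, all of it resting on the smoother-data bounds \eqref{A:eqn:D:square}--\eqref{A:eqn:S:square} from \cite{BarHM15p} and the Carleson estimate of Lemma~\ref{A:lem:lusin:S}. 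The double layer bound \eqref{A:eqn:D:square:rough:intro} is then deduced from the integration-by-parts identity \eqref{A:eqn:D:SD}, which expresses $\nabla^m\D^{\mat A}\arr f$ as $\nabla^m\s^L_\nabla$ applied to an explicit $L^2$ input plus $\nabla^m\partial_t\D^{\mat A}$ applied to smoother data. The single layer bound \eqref{A:eqn:S:square:rough:intro} follows from \eqref{A:eqn:S:square:variant:intro} by Remark~\ref{rmk:S:S:rough}.

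Your sketch has a real gap at the step where you assert the trace bound $\doublebar{\nabla^m\D^{\mat A}\arr f\,|_{t=0^\pm}}_{L^2(\R^n)}\lesssim\doublebar{\arr f}_{L^2(\R^n)}$. This is not a preliminary fact one can cite: it is the $L^2$ boundedness of the full gradient of the double layer potential as a boundary operator, at least as hard as the square-function bound you are trying to prove. In the paper's logical structure, trace theorems of even lower order (for $\nabla^{m-1}u$, not $\nabla^m u$) are deferred to \cite{BarHM17pB}, which in turn \emph{uses} the square-function estimates established here. Your one-line appeal to ``a local $Tb$ argument together with the higher-order Caccioppoli inequality'' does not break this circularity; without an independent proof of that trace bound you cannot instantiate the semigroup representation $F(\,\cdot\,,t)=e^{-t|DB|}F_0$.

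A secondary issue is the first-order reduction itself. The equation $Lu=0$ is of order $2m$ in $\partial_t$, so a first-order system in $t$ naturally carries $2m$ layers of Cauchy data and conormal fluxes, not the single array $\nabla^m u$; writing $\partial_t(\nabla^m u)=-DB(\nabla^m u)$ with a bounded multiplication $B$ and a tangential first-order $D$ is not correct as stated (for instance, $\partial_t$ of the purely vertical component $\partial_t^m u$ is $\partial_t^{m+1}u$, which the equation does not express in terms of $\nabla^m u$ alone). Setting up the correct higher-order analogue of the Auscher--Axelsson--McIntosh framework, identifying the right invariant subspace, and verifying accretivity there is substantial work that your outline does not supply.
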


The bound \eqref{A:eqn:D:square:rough:intro} will be established in Section~\ref{A:sec:D:square:variant}, and the bound
\eqref{A:eqn:S:square:rough:intro} (or, rather, the equivalent bound~\eqref{A:eqn:S:square:variant:intro}) will be established in Section~\ref{A:sec:S:square:variant}.

We conjecture that this theorem may be generalized from the half-space to Lipschitz graph domains, but in contrast to the case of second order operators, the method of proof of \cite{BarHM17} requires the extra structure of~$\R^\dmn_+$.

Even in the case of second-order equations, some regularity assumption must be imposed on the coefficients $\mat A$ in order to ensure well-posedness of boundary-value problems. See the classic example of Caffarelli, Fabes, and Kenig \cite{CafFK81}, in which real, symmetric, bounded, continuous, elliptic coefficients $\mat A$ are constructed for which the Dirichlet problem with $L^p$ boundary data is not well-posed in the unit disk for any $1<p<\infty$.
A common starting regularity condition is that the coefficients $\mat A$ be $t$-independent in the sense of satisfying formula~\eqref{A:eqn:t-independent}.
Boundary value problems for such coefficients have been investigated extensively.
See, for example,
\cite{JerK81A,FabJK84,KenP93,KenKPT00,Rul07,AlfAAHK11,AusAH08,AusAM10A,Bar13,AusM14,HofKMP15A,HofKMP15B,HofMitMor15,BarM16A}.

The only result known to be valid for operators with variable $t$-independent coefficients of arbitrary order is layer potential estimates for a higher order of regularity, that is, when the data $\arr f$ lie in $\dot W^2_1(\R^n)$ and $\arr g$ lie in~$L^2(\R^n)$. These results were proven by the authors of the present paper in {\cite{BarHM17}}. Specifically, under the same conditions as in Theorem~\ref{A:thm:square:rough}, we have the estimates
\begin{align}
\label{A:eqn:D:square}
\int_{\R^n}\int_{-\infty}^\infty \abs{\nabla^m \partial_t \D^{\mat A} \arr f(x,t)}^2\,\abs{t}\,dt\,dx
	& \leq C \doublebar{\arr f}_{\dot W^2_1(\R^\dmnMinusOne)}^2
	= C \doublebar{\nabla_\pureH\arr f}_{L^2(\R^n)}^2
,\\
\label{A:eqn:S:square}
\int_{\R^n}\int_{-\infty}^\infty \abs{\nabla^m \partial_t\s^L \arr g(x,t)}^2\,\abs{t}\,dt\,dx
	& \leq C \doublebar{\arr g}_{L^2(\R^n)}^2
\end{align}
for all $\arr g\in {L^2(\R^n)}$ and where $\arr f$ is as in Theorem~\ref{A:thm:square:rough}.

The approach of the present paper is somewhat different from that of \cite{BarHM17}, as we may exploit the bounds \eqref{A:eqn:D:square} and \eqref{A:eqn:S:square} established therein. The arguments of both papers, however, rely on $T1$ or $Tb$ type theorems.

The space $\dot W^2_{-1}(\R^\dmnMinusOne)$ is difficult to work with, and so it is often convenient to define an auxiliary operator whose boundedness on $L^2(\R^\dmnMinusOne)$ implies boundedness of $\s^L$ on $\dot W^2_{-1}(\R^\dmnMinusOne)$. In the second order case, this auxiliary operator is the modified single layer potential $\s^L\nabla$ used in \cite{AlfAAHK11,HofMitMor15,HofMayMou15}, and is given by
\begin{align*}
(\s^L\nabla)\cdot\vec h(x,t) &= \int_{\R^n}\nabla_{y,s} E^{L}(x,t,y,s)\big\vert_{s=0} \cdot \vec h(y)\,dy
.\end{align*}
We will define the higher order modified single layer potential $\s^L_\nabla$ in Section~\ref{A:sec:dfn:potentials}.
For $t$-independent coefficients the bounds~\eqref{A:eqn:S:square:rough:intro} and \eqref{A:eqn:S:square} are equivalent to the bound
\begin{align}
\label{A:eqn:S:square:variant:intro}
\int_{\R^n}\int_{-\infty}^\infty \abs{\nabla^m \s^L_\nabla \arr h(x,t)}^2\,\abs{t}\,dt\,dx
	&\leq C \doublebar{\arr h}_{L^2(\R^n)}^2
.\end{align}
Observe that the kernel of $\s^L\nabla$ involves a gradient of the fundamental solution and so has one fewer degree of smoothness than the kernel of~$\s^L$ in formula~\eqref{A:eqn:introduction:S}. (The same is true of the higher order operators $\s^L$, $\s^L_\nabla$ given in Section~\ref{A:sec:dfn:potentials}.) This additional degree of smoothness is exploited in \cite{BarHM17} to establish the quasi-orthogonality estimate required by the $Tb$ theorem of \cite{GraH16}, and so the arguments of \cite{BarHM17} cannot be applied to bound~$\s^L_\nabla$; we must exploit other $T1$ type theorems.

\subsection{\texorpdfstring{$L^p$}{Lp} and Carleson bounds on the single layer potential}

We will establish some further estimates on the single layer potential $\s^L$. We will also establish some bounds on the modified single layer potential~$\s^L_\nabla$. We will apply these bounds in the paper \cite{BarHM17pB}.

Specifically, we have the following theorem. In this theorem $\mathcal{A}_2^\pm$ denotes the Lusin area integral. See formula~\eqref{A:eqn:lusin} below for a precise definition; here we will merely observe that
\begin{equation*}\doublebar{\mathcal{A}_2^\pm H}_{L^2(\R^n)}^2 =c_n \int_{\R^\dmn_\pm} \abs{H(x,t)}^2\frac{1}{\abs{t}}\,dt\,dx\end{equation*}
and so the bounds \eqref{A:eqn:S:square} and~\eqref{A:eqn:S:square:variant:intro} may be written as
\begin{align*}\doublebar{\mathcal{A}_2^\pm (\abs{t}\,\nabla^m \partial_t \s^L \arr g)}_{L^2(\R^n)}
&\leq C \doublebar{\arr g}_{L^2(\R^n)}
,\\
\doublebar{\mathcal{A}_2^\pm (\abs{t}\,\nabla^m \s^L_\nabla \arr h)}_{L^2(\R^n)}
&\leq C \doublebar{\arr h}_{L^2(\R^n)}
.\end{align*}

\begin{thm}\label{A:thm:Carleson:intro}
Suppose that $L$ is an operator of the form~\eqref{A:eqn:divergence}, of order~$2m$, $m\geq 1$, acting on functions defined in $\R^\dmn$, $\dmnMinusOne\geq 1$, associated with coefficients $\mat A$ that are $t$-independent in the sense of formula~\eqref{A:eqn:t-independent} and satisfy the ellipticity conditions \eqref{A:eqn:elliptic} and~\eqref{A:eqn:elliptic:bounded}.
Let $\s^L$ and $\s^L_\nabla$ be given by formulas~\eqref{A:dfn:S:newton} and~\eqref{A:eqn:S:S:vertical}--\eqref{A:eqn:S:S:horizontal}.

There is some $\varepsilon>0$, depending only on $m$, the dimension $\dmn$ and the parameters $\lambda$ and $\Lambda$ in formulas~\eqref{A:eqn:elliptic} and~\eqref{A:eqn:elliptic:bounded}, such that if $2-\varepsilon<p\leq 2$, then there is a number $C(p)$ such that
\begin{align}
\label{A:eqn:S:p-}
\doublebar{\mathcal{A}_2^\pm (\abs{t}\,\nabla^m \partial_t \s^L \arr g)}_{L^p(\R^n)}
&\leq C(p) \doublebar{\arr g}_{L^p(\R^n)}, && 2-\varepsilon<p\leq 2
,\\
\label{A:eqn:S:p-:variant}
\doublebar{\mathcal{A}_2^\pm (\abs{t}\,\nabla^m \s^L_\nabla \arr h)}_{L^p(\R^n)}
&\leq C(p) \doublebar{\arr h}_{L^p(\R^n)}
, && 2-\varepsilon<p\leq 2
.\end{align}

If $\dmn=2$ or $\dmn=3$ then we have the estimates
\begin{align}
\label{A:eqn:lusin:atomic}
\doublebar{\mathcal{A}_2^\pm (\abs{t}\,\nabla^{m-1} \partial_t^2 \s^L \arr g)}_{L^p(\R^n)}
&\leq C(p) \doublebar{\arr g}_{L^p(\R^n)}, && 1<p\leq 2,
\\
\label{A:eqn:lusin:atomic:k}
\doublebar{\mathcal{A}_2^\pm (\abs{t}^k\,\nabla^{m} \partial_t^k \s^L \arr g)}_{L^p(\R^n)}
&\leq C(k,p) \doublebar{\arr g}_{L^p(\R^n)}
,&& 1<p\leq 2,\>k\geq 2.
\end{align}

If $k$ is large enough (depending on $m$ and~$n$), then the following statements are true.

First, we have the Carleson measure estimates
\begin{align}
\label{A:eqn:S:Carleson:intro}
\sup_{Q\subset\R^n}
\frac{1}{\abs{Q}}\int_Q\int_{0}^{\ell(Q)} \abs{t^{k} \nabla^m\partial_t^{k}\s^L \arr b(x,t)}^2\,\frac{1}{t}\,dt\,dx
	& \leq C(k) \doublebar{\arr b}_{L^\infty(\R^n)}^2
,\\
\label{A:eqn:S:variant:Carleson}
\sup_{Q\subset\R^n}
\frac{1}{\abs{Q}}\int_Q\int_{0}^{\ell(Q)} \abs{t^k \nabla^m\partial_t^k\s^L_\nabla \arr b(x,t)}^2\,{t}\,dt\,dx
	& \leq C(k) \doublebar{\arr b}_{L^\infty(\R^n)}^2
\end{align}
where the supremum is over all cubes $Q\subset\R^n$, and where $\ell(Q)$ denotes the side length of~$Q$.
We also have the corresponding estimates in the lower half-space.

We have the area integral estimates
\begin{align}
\label{A:eqn:lusin:p:intro}
\doublebar{\mathcal{A}_2^\pm (\abs{t}^k\,\nabla^m \partial_t^{k} \s^L \arr g)}_{L^p(\R^n)}
&\leq C(k,p) \doublebar{\arr g}_{L^p(\R^n)}
, && 2\leq p<\infty,\\
\label{A:eqn:lusin:p:variant:intro}
\doublebar{\mathcal{A}_2^\pm (\abs{t}^{k+1}\,\nabla^m \partial_t^{k}\s^L_\nabla \arr h)}_{L^p(\R^n)}
&\leq C(k,p) \doublebar{\arr h}_{L^p(\R^n)}
, && 2\leq p<\infty.\end{align}

Let $\eta$ be a Schwartz function defined on $\R^n$ with $\int\eta=1$.
Let $\mathcal{Q}_t$ denote convolution with $\eta_t=t^{-n}\eta(\,\cdot\,/t)$.
Let $\arr b$ be any array of bounded functions. Then for any $p$ with $1<p<\infty$, we have that
\begin{align}
\label{A:eqn:S:Schwartz:p}
\doublebar{\mathcal{A}_2^\pm(\abs{t}^{k+1}\partial_\perp^{k+m}\s^L_\nabla (\arr b \mathcal{Q}_{\abs{t}} h))}_{L^p(\R^n)}
&\leq C(p,k,\eta)\doublebar{\arr b}_{L^\infty(\R^n)}\doublebar{h}_{L^p(\R^n)}, && 1<p<\infty
\end{align}
where the constant $C(p,k,\eta)$ depends only on $p$, $k$, the Schwartz constants of~$\eta$, and on the standard parameters $n$, $m$, $\lambda$, and~$\Lambda$.

\end{thm}

The bounds \eqref{A:eqn:lusin:atomic} and~\eqref{A:eqn:lusin:atomic:k} are also valid in higher dimensions if $L$ satisfies a De Giorgi-Nash-Moser condition; see Lemma~\ref{A:lem:atomic}.

The estimates \eqref{A:eqn:S:Carleson:intro} and~\eqref{A:eqn:lusin:p:intro} will be established in Section~\ref{A:sec:S:vertical}, and will be needed to prove Theorem~\ref{A:thm:square:rough}; for completeness, we will establish the similar estimates \eqref{A:eqn:S:variant:Carleson} and~\eqref{A:eqn:lusin:p:variant:intro} in Section~\ref{A:sec:S:vertical:variant}. The estimate \eqref{A:eqn:S:Schwartz:p} will be proven in Lemma~\ref{A:lem:S:Schwartz:p}.
We will prove the bounds \eqref{A:eqn:lusin:atomic} and~\eqref{A:eqn:lusin:atomic:k} in Section~\ref{A:sec:atomic}, and will prove the bounds \eqref{A:eqn:S:p-} and~\eqref{A:eqn:S:p-:variant} in Section~\ref{A:sec:p-}.

As mentioned above, the $q=2$ case of the bounds \eqref{A:eqn:S:p-} and~\eqref{A:eqn:S:p-:variant} are simply the bounds \eqref{A:eqn:S:square} and~\eqref{A:eqn:D:square:rough:intro}; the $q=2$ case of the bounds \eqref{A:eqn:lusin:atomic:k}, \eqref{A:eqn:lusin:p:intro} and~\eqref{A:eqn:lusin:p:variant:intro} follow from the Caccioppoli inequality applied in Whitney cubes, and so the novelty lies in the cases $2-\varepsilon<q<2$, $1<q<2$ or $2<q<\infty$.

\subsection{Boundary value problems and future work}
It is our intention to use the classic method of layer potentials to establish well-posedness of boundary value problems.

If the coefficents $\mat A$ of the operator $L$ given by formula~\eqref{A:eqn:divergence} are real and constant, and if $\Omega$ is a bounded Lipschitz domain, then by \cite{PipV95B} and \cite{DahKPV97} the boundary value problems
\begin{align*}
Lu&=0\text{ in }\Omega,& \nabla^{m-1}u\big\vert_{\partial\Omega}&=\arr f, & \int_\Omega \abs{\nabla^m u(X)}^2 \dist(X,\partial\Omega)\,dX&\leq C \doublebar{\arr f}_{L^2(\partial\Omega)}^2
,\\
Lu&=0\text{ in }\Omega,& \nabla^{m-1}u\big\vert_{\partial\Omega}&=\arr f, & \int_\Omega \abs{\nabla^{m+1} u(X)}^2 \dist(X,\partial\Omega)\,dX&\leq C \doublebar{\arr f}_{\dot W_1^2(\partial\Omega)}^2
\end{align*}
are well-posed; that is, there is at most one solution to each equation, and if $\arr f=\nabla^{m-1}\varphi\vert_{\partial\Omega}$ for \emph{some} function~$\varphi$, then a solution exists, that is, $\arr f=\nabla^{m-1}u\vert_{\partial\Omega}$ for some $u$ with $Lu=0$ in~$\Omega$ that satisfies appropriate integral estimates. If $L=\Delta^2$ is the biharmonic operator, then by \cite{Ver05} and \cite{PipV91} the Neumann problem
\begin{equation}
\label{A:eqn:Neumann:biharmonic}
\Delta^2 u=0\text{ in }\Omega, \quad \M_{\mat A}^\Omega u=\arr g, \quad \int_\Omega \abs{\nabla^3 u(X)}^2 \dist(X,\partial\Omega)\,dX\leq C \doublebar{\arr g}_{L^2(\partial\Omega)}^2
\end{equation}
is well posed, where $\M_{\mat A}^\Omega u$ denotes the Neumann boundary values of~$u$ associated to the coefficients $\mat A$ of the operator~$\Delta^2$. Formulation of the Neumann boundary values of solutions to elliptic equations is a difficult problem, tightly intertwined with the formulation of layer potentials; we refer the interested reader to \cite{Ver10,BarM16B} for a discussion of related issues and to \cite{BarHM17} for the formulation of Neumann boundary values used in \cite{BarHM17pB,BarHM18}.

Notice that if $\Omega=\R^\dmn_+$ is the half-space and we identify $\R^n$ with $\partial\R^\dmn_+$, then the area integral appearing in these problems may be written as
\begin{align*}
\int_\Omega \abs{\nabla^m u(X)}^2 \dist(X,\partial\Omega)\,dX
&=\int_{\R^n}\int_0^\infty \abs{\nabla^{m}u(x,t)}^2\,{t}\,dt\,dx
,\\
\int_\Omega \abs{\nabla^{m+1} u(X)}^2 \dist(X,\partial\Omega)\,dX
&=\int_{\R^n}\int_0^\infty \abs{\nabla^{m+1}u(x,t)}^2\,{t}\,dt\,dx
.\end{align*}

In \cite{BarHM18}, we shall establish well posedness of the Neumann problems
\begin{gather}
\label{A:eqn:Neumann:regular}
Lu=0\text{ in }\R^\dmn_+,\quad \M_{\mat A}^+ u=\arr g, \quad \int_{\R^n}\int_0^\infty \abs{\nabla^{m}\partial_t u(x,t)}^2\,{t}\,dt\,dx\leq C \doublebar{\arr g}_{L^2(\partial\Omega)}^2
,\\
\label{A:eqn:Neumann:rough}
Lu=0\text{ in }\R^\dmn_+,\quad \M_{\mat A}^+ u=\arr g, \quad \int_{\R^n}\int_0^\infty \abs{\nabla^{m} u(x,t)}^2\,{t}\,dt\,dx\leq C \doublebar{\arr g}_{\dot W_{-1}^2(\partial\Omega)}^2
\end{gather}
whenever $\mat A$ is a bounded, $t$-independent, self-adjoint matrix of coefficients satisfying an ellipticity condition (stronger than the bound~\eqref{A:eqn:elliptic} below). Our solution $u$ will be of the form $u=\D^{\mat A} \arr f$ for some appropriate $\arr f$ with $\doublebar{\arr f}_{\dot W^2_1(\R^n)}\leq C\doublebar{\arr g}_{L^2(\R^n)}$ or $\doublebar{\arr f}_{L^2(\R^n)}\leq C\doublebar{\arr g}_{\dot W^2_{-1}(\R^n)}$; thus, the bound on solutions in the statement of well posedness is a direct consequence of the bound~\eqref{A:eqn:D:square:rough:intro} of the present paper or of the bound~\eqref{A:eqn:D:square} of \cite{BarHM17}.

In order to establish that an appropriate $\arr g$ exists, we shall use some arguments introduced in \cite{Ver84,BarM13,BarM16A}. In order to apply these arguments, we shall require boundedness of the single layer potential as well as the double layer potential (the bounds \eqref{A:eqn:S:square:rough:intro} and~\eqref{A:eqn:S:square}).

We will also need trace theorems; that is, we shall need the fact that
any solution $u$ to $Lu=0$ in $\R^\dmn_+$ with
\begin{equation*}\int_{\R^n}\int_{{0}}^\infty \abs{\nabla^m u(x,t)}^2\,{t}\,dt\,dx<\infty\end{equation*}
satisfies $\nabla^{m-1}u\big\vert_{\partial\R^\dmn_+}\in L^2(\R^n)$ and $\M_{\mat A}^+ u\in \dot W^2_{-1}(\R^n)$, and that any solution $u$ with
\begin{equation*}\int_{\R^n}\int_{{0}}^\infty \abs{\nabla^m \partial_t u(x,t)}^2\,{t}\,dt\,dx<\infty\end{equation*}
satisfies $\nabla^{m-1}u\big\vert_{\partial\R^\dmn_+}\in \dot W^2_1(\R^n)$ and $\M_{\mat A}^+ u\in L^2(\R^n)$.
These two facts shall be established in \cite{BarHM17pB}.
We remark that we will use the estimates~\eqref{A:eqn:lusin:p:variant:intro} and~\eqref{A:eqn:S:Schwartz:p} in \cite{BarHM17pB}; it is this intended use that makes these estimates of immediate interest to us.

\section{Definitions}

In this section, we will provide precise definitions of the notation and concepts used throughout this paper. We mention that throughout this paper, we will work with elliptic operators~$L$ of order~$2m$, $m\geq 1$, in the divergence form \eqref{A:eqn:divergence} acting on functions defined on~$\R^\dmn$, $\dmn\geq 2$.

If $Q$ is a cube, we let $\ell(Q)$ be its side length, and we let $cQ$ be the concentric cube of side length $c\ell(Q)$. If $E$ is a set of finite measure, we let $\fint_E f(x)\,dx=\frac{1}{\abs{E}}\int_E f(x)\,dx$.

\subsection{Multiindices and arrays of functions}

We will reserve the letters $\alpha$, $\beta$, $\gamma$, $\zeta$ and~$\xi$ to denote multiindices in $(\N_0)^\dmn$. (Here $\N_0$ denotes the nonnegative integers.) If $\zeta=(\zeta_1,\zeta_2,\dots,\zeta_\dmn)$ is a multiindex, then we define $\abs{\zeta}$ and $\partial^\zeta$
in the usual ways, as $\abs{\zeta}=\zeta_1+\zeta_2+\dots+\zeta_\dmn$ and $\partial^\zeta=\partial_{x_1}^{\zeta_1}\partial_{x_2}^{\zeta_2} \cdots\partial_{x_\dmn}^{\zeta_\dmn}$.

Recall that a vector $\vec H$ is a list of numbers (or functions) indexed by integers $j$ with $1\leq j\leq N$ for some $N\geq 1$. We similarly let an array $\arr H$ be a list of numbers or functions indexed by multiindices~$\zeta$ with $\abs\zeta=k$ for some $k\geq 1$. It is by now standard to denote arrays using overdots.
In particular, if $\varphi$ is a function with weak derivatives of order up to~$k$, then we view $\nabla^k\varphi$ as such an array.

The inner product of two such arrays of numbers $\arr F$ and $\arr G$ is given by
\begin{equation*}\bigl\langle \arr F,\arr G\bigr\rangle =
\sum_{\abs{\zeta}=k}
\overline{F_{\zeta}}\, G_{\zeta}.\end{equation*}
If $\arr F$ and $\arr G$ are two arrays of functions defined in a set $\Omega$ in Euclidean space, then the inner product of $\arr F$ and $\arr G$ is given by
\begin{equation*}\bigl\langle \arr F,\arr G\bigr\rangle_\Omega =
\sum_{\abs{\zeta}=k}
\int_{\Omega} \overline{F_{\zeta}(X)}\, G_{\zeta}(X)\,dX.\end{equation*}

We let $\vec e_j$ be the unit vector in $\R^\dmn$ in the $j$th direction; notice that $\vec e_j$ is a multiindex with $\abs{\vec e_j}=1$. We let $\arr e_{\zeta}$ be the ``unit array'' corresponding to the multiindex~$\zeta$; thus, $\langle \arr e_{\zeta},\arr F\rangle = F_{\zeta}$.

We will let $\nabla_\pureH$ denote either the gradient in~$\R^n$, or the $n$ horizontal components of the full gradient~$\nabla$ in $\R^\dmn$.
If $\zeta$ is a multiindex with $\zeta_\dmn=0$, we will occasionally use the terminology $\partial_\pureH^\zeta$ to emphasize that the derivatives are taken purely in the horizontal directions.

\subsection{Elliptic differential operators and their bounds}

Let $\mat A = \begin{pmatrix} A_{\alpha\beta} \end{pmatrix}$ be a matrix of measurable coefficients defined on $\R^\dmn$, indexed by multtiindices $\alpha$, $\beta$ with $\abs{\alpha}=\abs{\beta}=m$. If $\arr F$ is an array, then $\mat A\arr F$ is the array given by
\begin{equation*}(\mat A\arr F)_{\alpha} =
\sum_{\abs{\beta}=m}
A_{\alpha\beta} F_{\beta}.\end{equation*}

We will consider coefficients that satisfy the G\r{a}rding inequality
\begin{align}
\label{A:eqn:elliptic}
\re {\bigl\langle\nabla^m \varphi,\mat A\nabla^m \varphi\bigr\rangle_{\R^\dmn}}
&\geq
	\lambda\doublebar{\nabla^m\varphi}_{L^2(\R^\dmn)}^2
	\quad\text{for all $\varphi\in\dot W^2_m(\R^\dmn)$}
\end{align}
and the bound
\begin{align}
\label{A:eqn:elliptic:bounded}
\doublebar{\mat A}_{L^\infty(\R^\dmn)}
&\leq
	\Lambda
\end{align}
for some $\Lambda>\lambda>0$.
In this paper we will focus exclusively on coefficients that are $t$-inde\-pen\-dent, that is, that satisfy formula~\eqref{A:eqn:t-independent}.

We let $L$ be the $2m$th-order divergence-form operator associated with~$\mat A$ acting on Sobolev spaces. That is, we say that $L u=0$ in an open set $\Omega\subseteq\R^\dmn$ in the weak sense if the weak gradient $\nabla^m u$ is locally square integrable in~$\Omega$ and if, for every $\varphi$ smooth and compactly supported in~$\Omega$, we have that
\begin{equation}
\label{A:eqn:L}
\bigl\langle\nabla^m\varphi, \mat A\nabla^m u\bigr\rangle_\Omega
=\sum_{\abs{\alpha}=\abs{\beta}=m}
\int_{\Omega}\partial^\alpha \bar \varphi\, A_{\alpha\beta}\,\partial^\beta u
=
0
.\end{equation}

Throughout the paper we will let $C$ denote a constant whose value may change from line to line, but which depends only on the dimension $\dmn$, the ellipticity constants $\lambda$ and $\Lambda$ in the bounds \eqref{A:eqn:elliptic} and~\eqref{A:eqn:elliptic:bounded}, and the order~$2m$ of our elliptic operators. Any other dependencies will be indicated explicitly.

\subsection{Function spaces and boundary data}

Let $\Omega\subseteq\R^n$ or $\Omega\subseteq\R^\dmn$ be a measurable set in Euclidean space. We will let $L^p(\Omega)$ denote the usual Lebesgue space with respect to Lebesgue measure with norm given by
\begin{equation*}\doublebar{f}_{L^p(\Omega)}=\biggl(\int_\Omega \abs{f(x)}^p\,dx\biggr)^{1/p}.\end{equation*}

If $\Omega$ is a connected open set, then we let the homogeneous Sobolev space $\dot W^p_m(\Omega)$ be the space of equivalence classes of functions $u$ that are locally integrable in~$\Omega$ and have weak derivatives in $\Omega$ of order up to~$m$ in the distributional sense, and whose $m$th gradient $\nabla^m u$ lies in $L^p(\Omega)$. Two functions are equivalent if their difference is a polynomial of order~$m-1$.
We impose the norm
\begin{equation*}\doublebar{u}_{\dot W^p_m(\Omega)}=\doublebar{\nabla^m u}_{L^p(\Omega)}.\end{equation*}
Then $u$ is equal to a polynomial of order $m-1$ (and thus equivalent to zero) if and only if its $\dot W^p_m(\Omega)$-norm is zero.

The use of the dot to denote a homogeneous Sobolev space (in contrast to the inhomogeneous space $W^p_m(\Omega)$ with the norm $\doublebar{u}_{W^p_m(\Omega)}=\sum_{k=0}^m \doublebar{\nabla^k u}_{L^p(\Omega)}$) is by now standard in the theory of Sobolev spaces and related function spaces measuring smoothness. As mentioned above, we also use dots to denote arrays of functions. We apologize for this potentially confusing notation, but the established conventions of these two areas seem to require it.

\subsubsection{Dirichlet boundary data and spaces.}

If $u$ is defined in $\R^\dmn_+$, we let its Dirichelt boundary values be, loosely, the boundary values of the gradient $\nabla^{m-1} u$. More precisely, we let the Dirichlet boundary values be the array of functions $\Tr_{m-1}u=\Tr_{m-1}^+ u$, indexed by multiindices $\gamma$ with $\abs\gamma=m-1$, and given by
\begin{equation}
\label{A:eqn:Dirichlet}
\begin{pmatrix}\Tr_{m-1}^+ u\end{pmatrix}_{\gamma}
=f \quad\text{if}\quad
\lim_{t\to 0^+} \doublebar{\partial^\gamma u(\,\cdot\,,t)-f}_{L^1(K)}=0
\end{equation}
for all compact sets $K\subset\R^n$. If $u$ is defined in $\R^\dmn_-$, we define $\Tr_{m-1}^- u$ similarly.

We will be concerned with boundary values in Lebesgue or Sobolev spaces. However, observe that the different components of $\Tr_{m-1}u$ arise as derivatives of a common function, and thus must satisfy certain compatibility conditions. We will define the Whitney spaces of functions that satisfy these compatibility conditions and have certain smoothness properties as follows.
\begin{defn} \label{A:dfn:Whitney}
Let
\begin{equation*}\mathfrak{D}=\{\Tr_{m-1}\varphi:\varphi\text{ smooth and compactly supported in $\R^\dmn$}\}.\end{equation*}

We let $\dot W\!A^p_{m-1,0}(\R^n)$ be the closure of the set $\mathfrak{D}$ in $L^p(\R^n)$.

We let $\dot W\!A^p_{m-1,1}(\R^n)$ be the closure of $\mathfrak{D}$ in $\dot W^p_1(\R^n)$, so that $\doublebar{\arr f}_{\dot W\!A^p_{m-1,1}(\R^n)}=\doublebar{\nabla_\pureH \arr f}_{L^p(\R^n)}$.

Finally, we let $\dot W\!A^2_{m-1,1/2}(\R^n)$ be the closure of $\mathfrak{D}$ in the homogeneous Besov space $\dot B^{2,2}_{1/2}(\R^n)$, where the Besov norm is given by
\begin{equation*}
\doublebar{f}_{\dot B^{2,2}_{1/2}(\R^n)} = \biggl(\int_{\R^n} \abs{\widehat f(\omega)}^2\,\abs{\omega}\,d\omega\biggr)^{1/2}
\end{equation*}
where $\widehat f$ denotes the Fourier transform of~$f$.
\end{defn}

The double layer potential will initially be defined on the space $\dot W\!A^2_{m-1,1/2}(\R^n)$; the bound~\eqref{A:eqn:D:square:rough:intro} (as well as the known result~\eqref{A:eqn:D:square}) are essentially statements that we may extend $\mathcal{D}^{\mat A}$ by density to $\dot W\!A^2_{m-1,0}(\R^n)$ and $\dot W\!A^2_{m-1,1}(\R^n)$.

From our perspective, the most interesting property of the space $\dot W\!A^2_{m-1,1/2}(\R^n)$ is the following well known trace and extension lemma.

\begin{lem}\label{A:lem:Besov}
If $u\in \dot W^2_m(\R^\dmn_+)$ then $\Tr_{m-1}^+u\in \dot W\!A^2_{m-1,1/2}(\R^n)$, and furthermore
\begin{equation*}\doublebar{\Tr_{m-1}^+u}_{\dot W\!A^2_{m-1,1/2}(\R^n)}\leq C \doublebar{\nabla^m u}_{L^2(\R^\dmn_+)}.\end{equation*}
Conversely, if $\arr f\in \dot W\!A^2_{m-1,1/2}(\R^n)$, then there is some $F\in \dot W^2_m(\R^\dmn_+)$ such that $\Tr_{m-1}^+F=\arr f$ and such that
\begin{equation*}\doublebar{\nabla^m F}_{L^2(\R^\dmn_+)}\leq C \doublebar{\arr f}_{\dot W\!A^2_{m-1,1/2}(\R^n)}.\end{equation*}
\end{lem}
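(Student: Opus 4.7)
The plan is to reduce both halves of the lemma to the classical first-order trace/extension theorem, applied componentwise and reorganized in terms of pure $t$-derivatives $h_k(x):=\partial_t^k\varphi(x,0)$.

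For the trace direction, I would first establish the scalar Fourier-side bound
\[\int_{\R^n}|\widehat{w(\cdot,0)}(\xi)|^2\,|\xi|\,d\xi\le C\,\|\nabla w\|_{L^2(\R^\dmn_+)}^2\]
for $w\in C^\infty_0(\R^\dmn)$. This follows from the identity $|\widehat{w(\cdot,0)}(\xi)|^2=-\int_0^\infty\partial_t|\widehat{w(\cdot,t)}(\xi)|^2\,dt$ combined with the elementary inequality $2ab\le|\xi|\,a^2+|\xi|^{-1}b^2$ and Plancherel's theorem. Given $u\in\dot W^2_m(\R^\dmn_+)$, I would then approximate $u$ by $\varphi_j\in C^\infty_0(\R^\dmn)$ in $\dot W^2_m(\R^\dmn_+)$ (via a higher-order reflection across $\partial\R^\dmn_+$ followed by mollification) and apply the above inequality to each $\partial^\gamma\varphi_j$ with $|\gamma|=m-1$. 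Summing over $\gamma$ and comparing with the definition \eqref{A:eqn:Besov:norm} shows that $(\Tr_{m-1}\varphi_j)$ is Cauchy in the Whitney-Besov norm with limit $\Tr_{m-1}^+u$, yielding the stated trace bound.

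For the extension direction, by density in $\mathfrak D$ it suffices to treat $\arr f=\Tr_{m-1}\varphi$ for $\varphi\in C^\infty_0(\R^\dmn)$, construct an appropriate $F$, and then pass to the limit. The key algebraic identity is that for $\gamma=k\e_\dmn+\alpha$ with $\alpha_\dmn=0$ and $|\alpha|=m-1-k$, one has $\widehat{f_\gamma}(\xi)=(2\pi i\xi)^\alpha\widehat{h_k}(\xi)$. Combining this with the multinomial identity $\sum_{|\alpha|=r}\binom{r}{\alpha}\xi^{2\alpha}=|\xi|^{2r}$ yields the norm equivalence
\[\|\arr f\|_{\dot W\!A^2_{m-1,1/2}(\R^n)}^2\ \sim\ \sum_{k=0}^{m-1}\int_{\R^n}|\widehat{h_k}(\xi)|^2\,|\xi|^{2(m-k)-1}\,d\xi,\]
that is, the Whitney half-derivative norm is equivalent to the sum of the homogeneous Besov norms $\|h_k\|_{\dot B^{2,2}_{m-k-1/2}}^2$. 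This reduces the extension problem to the classical Besov-to-Sobolev extension: set
\[F(x,t)=\sum_{k=0}^{m-1}\mathcal F_x^{-1}\Bigl[\widehat{h_k}(\xi)\,e^{-t|\xi|}\,P_k(t|\xi|)\Bigr](x),\]
where the polynomials $P_k$ are chosen so that $\partial_t^j F(x,0)=h_j(x)$ for all $0\le j\le m-1$; a direct Plancherel computation using $\int_0^\infty t^{2j}e^{-2t|\xi|}\,dt\sim|\xi|^{-2j-1}$ then gives $\|\nabla^m F\|_{L^2(\R^\dmn_+)}^2\le C\|\arr f\|_{\dot W\!A^2_{m-1,1/2}}^2$. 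A Cauchy argument in $\mathfrak D$ delivers the extension for general $\arr f$.

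The main obstacle I expect is not the Fourier computations, which are standard, but rather the two density/approximation steps, which are complicated by the polynomial ambiguity in $\dot W^2_m(\R^\dmn_+)$. In particular, producing a $C^\infty_0(\R^\dmn)$ approximation of an arbitrary $u\in\dot W^2_m(\R^\dmn_+)$ in the homogeneous norm requires a higher-order Calderón-type or reflection extension across $\partial\R^\dmn_+$, and one must confirm that the normalization used to identify $\Tr_{m-1}^+u$ as an element of the completion $\dot W\!A^2_{m-1,1/2}$ is consistent with the equivalence-class structure of $\dot W^2_m$.
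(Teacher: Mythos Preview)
Your proposal is correct and follows essentially the same route as the paper. The paper does not give a detailed proof but merely remarks that the $m=1$ trace inequality is classical (citing \cite{Jaw77}), that the $m\geq 2$ trace result follows from the $m=1$ case, and that extensions ``may easily be constructed using the Fourier transform''; your argument is precisely a fleshed-out version of this sketch---you apply the scalar $\dot H^1\to\dot H^{1/2}$ trace bound to each $\partial^\gamma\varphi$ and build the extension via a Poisson-type Fourier formula. The density and normalization issues you flag are real but routine, and the paper implicitly absorbs them into the cited references.
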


\begin{proof}
If $\dot W^2_m(\R^\dmn_+)$ and $\dot W\!A^2_{m-1,1/2}(\R^n)$ are replaced by their inhomogeneous counterparts, then this lemma is a special case of \cite{Liz60}. For the homogeneous spaces that we consider, the $m=1$ case of this lemma is a special case of \cite[Section~5]{Jaw77}.

If $m\geq 2$ and $u\in \dot W^2_m(\R^\dmn_+)$, then $\partial^\gamma u\in \dot W^2_1(\R^\dmn_+)$ for any $\abs\gamma=m-1$, and so $\Tr_{m-1}^+u=\begin{pmatrix}\Trace\partial^\gamma u\end{pmatrix}_{\abs\gamma=m-1}\in \dot B^{2,2}_{1/2}(\R^n)$. By density of $C^\infty_0(\R^\dmn)$ functions in $\dot W^2_m(\R^\dmn_+)$, $\Tr_{m-1}^+u$ lies in the distinguished subspace $\dot W\!A^2_{m-1,1/2}(\R^n)$ of~$\dot B^{2,2}_{1/2}(\R^n)$.

If $\arr f=\Tr_{m-1}^+\varphi$ for some $\varphi\in C^\infty_0(\R^n)$, then extensions may easily be constructed using the Fourier transform in $\R^n$, for example by letting
\begin{equation*}F(x,t)=F_t(x),\text{ where }\widehat {F_t}(\omega)=
\sum_{j=0}^{m-1}
\frac{t^j}{j!}e^{-(t^2\abs\omega^2)^m}\widehat{\varphi_j}(\omega)
\text{ and }\varphi_j(x)=\partial_\dmn^j\varphi(x,0)
.\end{equation*}
Extensions of arbitrary arrays in $\dot W\!A^2_{m-1,1/2}(\R^n)$ may be constructed by density.
\end{proof}

\subsection{The fundamental solution}\label{A:sec:fundamental}
The double and single layer potentials may be formulated in terms of the fundamental solution for~$L$; we will define the fundamental solution in this section.

For any $\arr H\in L^2(\R^\dmn)$, by the Lax-Milgram lemma there is a unique element $\Pi^L\arr H$ of $\dot W^2_m(\R^\dmn)$ that satisfies
\begin{equation}\label{A:eqn:newton}
\langle \nabla^m\varphi, \mat A\nabla^m \Pi^L\arr H\rangle_{\R^\dmn}=\langle \nabla^m\varphi, \arr H\rangle_{\R^\dmn}\end{equation}
for all $\varphi\in \dot W^2_m(\R^\dmn)$. The (gradient of the) fundamental solution is the kernel of the operator $\Pi^L$. It was constructed and certain properties were established in \cite{Bar16}; we summarize some of the main results here.

\begin{thm}[{\cite[Theorem~62 and Lemma~69]{Bar16}}]\label{A:thm:fundamental}
Suppose that $L$ is an operator of the form~\eqref{A:eqn:L} of order~$2m$, $m\geq 1$, acting on functions defined in $\R^\dmn$, $\dmnMinusOne\geq 1$, associated to coefficients~$\mat A$ that satisfy the bounds \eqref{A:eqn:elliptic} and~\eqref{A:eqn:elliptic:bounded}.
Then there exists a function $E^L(X,Y)$ with the following properties.

Let $q$, $s\in \{0,1\}$.
There is some $\varepsilon>0$ such that if $X_0$, $Y_0\in\R^\dmn$, if $0<4r<R<\abs{X_0-Y_0}/3$, and if $q=0$ or $\dmn\geq 3$, then
\begin{equation}
\label{A:eqn:fundamental:far}
\int_{B(Y_0,r)}\int_{B(X_0,R)} \abs{\nabla^{m-s}_X \nabla^{m-q}_Y E^L(X,Y)}^2\,dX\,dY \leq C r^{2q} R^{2s} \biggl(\frac{r}{R}\biggr)^\varepsilon
.\end{equation}
If $q=1=n$ then we instead have the bound
\begin{equation}
\label{A:eqn:fundamental:far:lowest:2}
\int_{B(Y_0,r)}\int_{B(X_0,R)} \abs{\nabla^{m-s}_X \nabla^{{m-1}}_Y E^L(X,Y)}^2\,dX\,dY \leq C(\delta)\, r^{2} R^{2s} \biggl(\frac{R}{r}\biggr)^\delta
\end{equation}
for all $\delta>0$ and some constant $C(\delta)$ depending on~$\delta$.

We also have the symmetry property
\begin{equation}
\label{A:eqn:fundamental:symmetric}
\partial_X^\zeta\partial_Y^\xi E^L(X,Y) = \overline{\partial_X^\zeta\partial_Y^\xi E^{L^*}(Y,X)}
\end{equation}
as locally $L^2$ functions, for all multiindices $\zeta$, $\xi$ with $\abs{\zeta}$, $\abs{\xi}\in \{m-1,m\}$,
and where $L^*$ is the elliptic operator associated to~$\mat A^*$, the adjoint matrix to~$\mat A$.

If $\abs\zeta=m-1$ and if $\dmn\geq 3$ then
\begin{equation}
\label{A:eqn:fundamental:low}
\partial^\zeta
\Pi^L\arr H(X)
= \ \sum_{\abs{\beta}=m} \int_{\R^\dmn} 	\partial_X^\zeta\partial_Y^\beta E^L(X,Y)\,H_{\beta}(Y)\,dY
\end{equation}
for almost every $X\in\R^\dmn$, and for all $\arr H\in L^2(\R^\dmn)$ that are also locally in $L^{P}(\R^\dmn)$, for some $P>\dmn$.
Furthermore, there is some $\varepsilon>0$ such that if $2-\varepsilon<p<2+\varepsilon$ then $\Pi^L$ extends to a bounded operator $L^p(\R^\dmn)\mapsto \dot W_m^p(\R^\dmn)$. If $\abs\zeta=m-1$, and if $\arr H\in L^p(\R^2)$ is locally in $L^P(\R^2)$ for some $2-\varepsilon<p<2<P$, then formula~\eqref{A:eqn:fundamental:low} is still valid even in dimension $\dmn=2$.

In the case of $\abs{\alpha}=m$, we still have that
if $\abs\alpha=\abs\beta=m$, then $\partial_X^\alpha\partial_Y^\beta E^L(X,Y)$ exists in the weak sense and is locally integrable. Furthermore, if $\abs{\alpha}=m$ and $\dmn\geq 2$ then
\begin{equation}
\label{A:eqn:fundamental:2}
\partial^\alpha\Pi^L\arr H(X)
	= \sum_{\abs{\beta}=m} \int_{\R^\dmn} 	\partial_X^\alpha\partial_Y^\beta E^L(X,Y)\,H_{\beta}(Y)\,dY
\end{equation}
for almost every $X\notin\supp \arr H$, and for all $\arr H\in L^2(\R^\dmn)$ whose support is not all of $\R^\dmn$.

Finally, the fundamental solution is unique in the following sense: if $\widetilde E^L$ is any other function that satisfies the bounds~\eqref{A:eqn:fundamental:far},~\eqref{A:eqn:fundamental:far:lowest:2} and formula~\eqref{A:eqn:fundamental:2}, then \begin{equation*}\nabla_X^{m-q}\nabla_Y^{m-s}E^L(X,Y)=\nabla_X^{m-q}\nabla_Y^{m-s}\widetilde E^L(X,Y)\end{equation*} for all $q$, $s\in \{0,1\}$.
\end{thm}

\begin{rmk}\label{A:rmk:GNS}
We comment on the definition in \cite{Bar16} of the lower order derivatives $\partial^\zeta\Pi^L\arr H$ in the bound~\eqref{A:eqn:fundamental:low}. This definition needs some care, for recall that $\Pi^L\arr H$ is an element of $\dot W^p_m(\R^\dmn)$, that is, is an equivalence class of functions up to adding polynomials of order at most~$m-1$.

However, by the Gagliardo-Nirenberg-Sobolev inequality (see, for example, Section~5.6.1 in \cite{Eva98}), if $w$ is a function with $\nabla^m w\in L^p(\R^\dmn)$, $p<\dmn$, and if $w$ is compactly supported, then $\nabla^{m-1} w\in L^{p_1}(\R^\dmn)$, where $p_1=p\,\pdmn/(\dmn-p)$.
If $p<\infty$ then compactly supported functions are dense in $\dot W^p_m(\R^\dmn)$. By a limiting argument and the definition of weak derivatives, we see that if $\nabla^m w\in L^p(\R^\dmn)$, then there is a function $u$ with $\nabla^m w=\nabla^m u$ and with $\nabla^{m-1} u\in L^{p_1}(\R^\dmn)$.

Recalling the definition of $\dot W^p_m(\R^\dmn)$, we see that if $\Pi^L$ is bounded $L^p(\R^\dmn)\mapsto \dot W^p_m(\R^\dmn)$ and if $\arr H\in L^p(\R^\dmn)$, $p<\dmn$, then there is a representative $u$ of the equivalence class $\Pi^L\arr H$ such that $\nabla^{m-1} u$ in $L^{p_1}(\R^\dmn)$. We write $\partial^\zeta\Pi^L\arr H=\partial^\zeta u$ whenever $\abs\zeta=m-1$.
Because representatives of $\Pi^L\arr H$ differ by polynomials, it is straightforward to establish that any two such representatives $u_1$ and $u_2$ must satisfy $\nabla^{m-1}u_1=\nabla^{m-1}u_2$, and so $\partial^\zeta\Pi^L\arr H$ is well defined.
\end{rmk}

Recall that we are concerned only with operators $L$ associated with coefficients~$\mat A$ that are $t$-independent. This gives us one other property of the fundamental solution. For all multiindices~$\zeta$, $\xi$ as in formula~\eqref{A:eqn:fundamental:symmetric}, by uniqueness we have that
\begin{equation}
\partial_{x,t}^\zeta \partial_{y,s}^\xi E^L(x,t,y,s)
= \partial_{x,t}^\zeta \partial_{y,s}^\xi E^L(x,t-s,y,0)
= \partial_{x,t}^\zeta \partial_{y,s}^\xi E^L(x,0,y,s-t)
\end{equation}
for all $x$, $y\in\R^n$ and all $s$, $t\in\R$. Here $E^L(x,t,y,s)=E^L((x,t),(y,s))$.
Therefore,
\begin{equation}
\label{A:eqn:fundamental:vertical}
\partial_{x,t}^\zeta \partial_{y,s}^\xi \partial_t E^L(x,t,y,s)= -\partial_{x,t}^\zeta \partial_{y,s}^\xi \partial_s E^L(x,t,y,s)
.\end{equation}

\subsection{Layer potentials}\label{A:sec:dfn:potentials}
Layer potentials may also be generalized to the higher order case. In this section we define our formulations of higher-order layer potentials; this is the formulation used in \cite{BarHM17} and is similar to that used in \cite{Agm57,CohG83,Ver05,MitM13B,MitM13A}.

Suppose that $\arr f\in \dot W\!A^2_{m-1,1/2}(\R^n)$.
By Lemma~\ref{A:lem:Besov}, there is some $F\in \dot W^2_m(\R^\dmn_+)$ that satisfies $\arr f=\Tr_{m-1}^+ F$.
We define the double layer potential of $\arr f$ as
\begin{align}
\label{A:dfn:D:newton}
\D^{\mat A}\arr f &= -\1_+ F + \Pi^L(\1_+ \mat A\nabla^m F)
\end{align}
where $\1_+ F$ denotes the extension of $F$ by zero to $\R^\dmn$.
$\D^{\mat A}\arr f$ is well-defined, that is, does not depend on the choice of~$F$; see \cite{BarHM17}.

Similarly, let $\arr g$ be a bounded linear functional on $\dot W\!A^2_{m-1,1/2}(\R^n)$.
There is some $\arr G\in L^2(\R^\dmn_+)$ such that $\langle \arr G, \nabla^m\varphi\rangle_{\R^\dmn_+} = \langle \arr g, \Tr_{m-1}^+\varphi\rangle_{\partial{\R^\dmn_+}}$ for all $\varphi\in \dot W^2_m$; see \cite{BarHM17}. Let $\1_+\arr G$ denote the extension of $\arr G$ by zero to $\R^\dmn$.
We define
\begin{align}
\label{A:dfn:S:newton}
\s^L\arr g&=\Pi^L(\1_+\arr G)
.\end{align}
Again, $\s^L\arr g$ does not depend on the choice of extension~$\arr G$; see \cite{BarHM17}.

\begin{rmk}
In general, the operator $L$ given by formula~\eqref{A:eqn:divergence} or~\eqref{A:eqn:L} may be associated to many different choices of coefficients~$\mat A$. For example, if $L$ is associated to the coefficients~$\mat A$, then $L$ is also associated to the coefficients~$\mat{\widetilde A}$, where $\widetilde A_{\alpha\beta}=A_{\alpha\beta}+M_{\alpha\beta}$ for some constant coefficients $\mat M$ that satisfy $M_{\alpha\beta}=-M_{\beta\alpha}$.

The fundamental solution $E^L$ of Theorem~\ref{A:thm:fundamental} and the single layer potential $\s^L$ given by formula~\eqref{A:dfn:S:newton} are independent of the choice of coefficient matrix~$\mat A$, depending only on the associated operator~$L$. However, the double layer potential $\D^{\mat A}$ given by formula~\eqref{A:dfn:D:newton} does depend on the choice of coefficients~$\mat A$.

These issues are important for some topics in the higher order theory. In particular, the Neumann boundary values $\M_{\mat A}^\Omega$, which were mentioned in the problems \eqref{A:eqn:Neumann:biharmonic}, \eqref{A:eqn:Neumann:regular}, and~\eqref{A:eqn:Neumann:rough} above and studied in \cite{Ver05,BarHM18}, also depend on the choice of $\mat A$. The choice can be important; indeed it was shown in \cite{Ver05} that the Neumann problem for $\Delta^2$ is well posed for some choices of~$\mat A$ and ill posed for others.
These issues are discussed in \cite{BarM16B,BarHM17} and at length in~\cite{Ver10}.
\end{rmk}

Using Theorem~\ref{A:thm:fundamental}, we may rewrite the double and single layer potentials in terms of the fundamental solution. This often allows a more straightforward calculation of various bounds. See \cite[Section~2.4]{BarHM17} for the details. If $\arr f\in \dot W\!A^2_{m-1,1/2}(\R^n)$ and $\arr g\in (\dot W\!A^2_{m-1,1/2}(\R^n))^*$,
then
\begin{align}
\label{A:eqn:D:fundamental}
\partial^\zeta \D^{\mat A} \arr f(x,t)
&=
- \!\!\sum_{\substack{\scriptstyle\abs{\beta}=m\\\abs{\xi}=m}} \int_{\R^\dmn_-} \partial_{x,t}^\zeta \partial_{y,s}^\beta E^L(x,t,y,s)\,A_{\beta\xi}(y,s) \, \partial^\xi F(y,s)\,ds\,dy
,\\
\label{A:eqn:S:fundamental}
\partial^\zeta\s^L\arr g(x,t)
&=
\sum_{\abs{\gamma}=m-1}
\int_{\R^n} \partial_{x,t}^\zeta \partial_{y,s}^\gamma E^L(x,t,y,0)\,g_{\gamma}(y) \,dy
\end{align}
for any $t>0$, any $F\in \dot W^2_m(\R^\dmn_-)$ with $\Tr_{m-1}^-F=\arr f$, and any $\abs\zeta=m$. Here we have adopted the shorthand that
\begin{equation}\label{A:eqn:S:shorthand}\partial_{x,t}^\zeta \partial_{y,s}^\gamma E^L(x,t,y,0)=\partial_{x,t}^\zeta\partial_{y,s}^\gamma E^L(x,t,y,s)\big\vert_{s=0}=\partial_X^\zeta\partial_Y^\gamma E^L(X,Y)\big\vert_{X=(x,t),\>Y=(y,0)}.\end{equation}

\begin{rmk}
\label{A:rmk:S:definition:dense}
The double layer potential is defined on $\dot W\!A^2_{m-1,1/2}(\R^n)$, and in particular on the dense subspace $\mathfrak{D}$ of Definition~\ref{A:dfn:Whitney}. We will establish bounds of the form $\doublebar{\D^{\mat A}\arr f}_{\mathfrak{X}}\leq C\doublebar{\arr f}_{\dot W\!A^p_{m-1,s}(\R^n)}$ for all $\arr f\in\mathfrak{D}$, for various values of $p$ and $s$ and tent spaces~$\mathfrak{X}$; this allows us to extend $\D^{\mat A}$ to an operator on ${\dot W\!A^p_{m-1,s}(\R^n)}$ by density.

It would be convenient to have a similar well-behaved dense subspace of the domain of $\s^L$. Let $\arr g$ be an array of functions that are smooth and compactly supported, and furthermore, let $\int_{\R^n} \arr g=0$. Then $\abs{\widehat{g_\gamma}(\omega)}\leq C(\arr g)\abs{\omega}$, and so by Plancherel's theorem and the definition of $\dot W\!A^2_{m-1,1/2}(\R^n)$, if $\arr f\in \dot W\!A^2_{m-1,1/2}(\R^n)$, then $\abs{\langle \arr g,\arr f\rangle_{\R^n}}\leq C(\arr g)\doublebar{\arr f}_{\dot W\!A^p_{m-1,1/2}(\R^n)}$. Thus, such arrays $\arr g$ are necessarily in the domain of $\s^L$. It is straightforward to establish that such arrays are in fact dense.
\end{rmk}

\begin{rmk}\label{A:rmk:potentials:decay}
In the present paper we will be primarily concerned with the highest order derivatives $\nabla^m \D^{\mat A}\arr f$ and $\nabla^m \s^L\arr g$. In \cite{BarHM18} and in future work, we will also need to consider the lower order derivatives $\nabla^{m-1} \D^{\mat A}\arr f$ and $\nabla^{m-1} \s^L\arr g$.

We will always normalize $\Pi^L$ as in Remark~\ref{A:rmk:GNS}.
If $\dmn\geq 3$, $\arr f\in \dot W\!A^2_{m-1,1/2}(\R^n)$, and $\arr g\in (\dot W\!A^2_{m-1,1/2}(\R^n))^*$, or if $\dmn=2$ and the extensions $F$ and $\arr G$ of formulas \eqref{A:dfn:D:newton} and~\eqref{A:dfn:S:newton} lie in $\dot W^p_m(\R^2_+)$ and $L^p(\R^2_+)$, respectively, for appropriate $p<2$,
then $\nabla^{m-1}\D^{\mat A}\arr f\in \dot W^{p_1}_m(\R^\dmn)$, $\nabla^{m-1}\s^L\arr g\in \dot W^{p_1}_m(\R^\dmn)$. Furthermore, if we impose further conditions on~$\arr f$ and~$\arr g$ (for example, the conditions of Remark~\ref{A:rmk:S:definition:dense}), then we may extend $\arr f$ and $\arr g$ to functions $F\in \dot W_m^p(\R^\dmn_-)\cap \dot W_m^P(\R^\dmn_-)$ and arrays $\arr G\in L^p(\R^\dmn)\cap L^P(\R^\dmn)$ for any $1<p<P<\infty$; thus, choosing $p<2$ and $P$ large enough, we see that formulas \eqref{A:eqn:D:fundamental} and~\eqref{A:eqn:S:fundamental} are valid for any multiindex $\zeta$ with $\abs\zeta\in \{m-1,m\}$.
\end{rmk}

In the second-order case, a variant $\s^L\nabla$ of the single layer potential is often used; see \cite{AlfAAHK11,HofMitMor15,HofMayMou15}. We will define an analogous operator in this case.

Let $\alpha$ be a multiindex with $\abs\alpha=m$. If $\alpha_\dmn>0$, let
\begin{equation}\label{A:eqn:S:S:vertical}\s^L_\nabla (h\arr e_\alpha)(x,t) = -\partial_t\s^L(h\arr e_\gamma)(x,t)\quad\text{where }\alpha=\gamma+\vec e_\dmn.\end{equation}
If $\alpha_\dmn<\abs\alpha=m$, then there is some $j$ with $1\leq j\leq n$ such that $\vec e_j\leq \alpha$. If $h$ is smooth and compactly supported, let
\begin{equation}\label{A:eqn:S:S:horizontal}\s^L_\nabla (h\arr e_\alpha)(x,t) = -\s^L(\partial_{x_j} h\arr e_\gamma)(x,t)\quad\text{where }\alpha=\gamma+\vec e_j.\end{equation}
By applying formula~\eqref{A:eqn:S:fundamental} for the single layer potential, and by either applying formula~\eqref{A:eqn:fundamental:vertical} or integrating by parts, we see that if $\arr h$ is smooth, compactly supported and integrates to zero, then
\begin{equation}
\label{A:eqn:S:variant}
\partial^\zeta \s^L_\nabla\arr h(x,t)
=\sum_{\abs\alpha=m}\int_{\R^n} \partial_{x,t}^\zeta \partial_{y,s}^\alpha E^L(x,t,y,0)\,h_\alpha(y)\,dy
\end{equation}
for $\abs\zeta\in \{m-1,m\}$ and for $\partial_{y,s}^\alpha E^L(x,t,y,0)$ as in formula~\eqref{A:eqn:S:shorthand}.
In particular, if $1\leq \alpha_\dmn\leq m-1$, then the two formulas \eqref{A:eqn:S:S:vertical} and \eqref{A:eqn:S:S:horizontal} coincide, and furthermore, the choice of distinguished direction $x_j$ in formula~\eqref{A:eqn:S:S:horizontal} does not matter.

\begin{rmk}\label{A:rmk:S:S:rough}
Let $\dot W^2_{-1}(\R^n)=(\dot W^2_1(\R^n))^*$ be the space of bounded linear operators on $\dot W^2_1(\R^n)$. An integration by parts argument shows that if $g\in \dot W^2_{-1}(\R^n)$, then formally $g=\Div \vec h$ for some $\vec h\in L^2(\R^n)$.

By formula~\eqref{A:eqn:S:S:horizontal}, if $\arr g\in \dot W^2_{-1}(\R^n)$, then there is some $\arr h\in L^2(\R^n)$ with $\doublebar{\arr h}_{L^2(\R^n)}\approx \doublebar{\arr g}_{\dot W^2_{-1}(\R^n)}$ and such that $\s^L\arr g=\s^L_\nabla\arr h$.

Thus, the bound \eqref{A:eqn:S:square:variant:intro} implies the bound~\eqref{A:eqn:S:square:rough:intro}; by formulas~\eqref{A:eqn:S:S:vertical} and~\eqref{A:eqn:S:S:horizontal}, the bounds \eqref{A:eqn:S:square} and~\eqref{A:eqn:S:square:rough:intro} imply the bound~\eqref{A:eqn:S:square:variant:intro}.
\end{rmk}

\section{Known results and preliminary arguments}

To prove Theorem~\ref{A:thm:square:rough} and our other estimates on layer potentials, we will need to use a number of known results from the theory of higher order differential equations. We gather these results in this section.

\begin{rmk} \label{A:rmk:lower}
Let $\rho(x,t)=(x,-t)$ be the change of variables that interchanges the upper and lower half-spaces. Let $ A_{\alpha\beta}^\rho = (-1)^{\alpha_\dmn+\beta_\dmn} A_{\alpha\beta}$, and let $f_\gamma^\rho=(-1)^{\gamma_\dmn}f_\gamma$. Notice that the map $\arr f\mapsto\arr f^\rho$ preserves the dense subspace $\mathfrak{D}$ of Definition~\ref{A:dfn:Whitney}, and thus the Whitney spaces $\dot W\!A^p_{m-1,s}(\R^n)$.

It is straightforward to establish that $\Pi^L\arr H = (\Pi^{L^\rho} (\arr H^\rho\circ\rho))\circ \rho$, and so
\begin{equation*}E^L(x,t,y,s) = E^{L^\rho}(\rho(x,t),\rho(y,s))\end{equation*}
and so by formula~\eqref{A:eqn:S:fundamental},
\begin{equation*}\s^L\arr g(x,-t) = \s^{L^\rho}\arr g^\rho(x,t).\end{equation*}
Thus, to prove the bound \eqref{A:eqn:S:square:rough:intro} or Theorem~\ref{A:thm:Carleson:intro}, it suffices to work only in the upper half-space, as the results in the lower half-space follow via this change of variables.

By \cite[formula~(2.27)]{BarHM17}, if $\1_-$ denotes extension by zero from the lower half space to $\R^\dmn$, then
\begin{align*}
\D^{\mat A}\arr f &= \1_- F - \Pi^L(\1_- \mat A\nabla^m F) \quad\text{if }\Tr_{m-1}^- F=\arr f
\end{align*}
and so
\begin{equation*}\D^{\mat A}\arr f(x,-t) = -\D^{\mat A^\rho}\arr f^\rho(x,t)\end{equation*}
and so it suffices to prove the bound \eqref{A:eqn:D:square:rough:intro} in the upper half-space as well.
\end{rmk}

\subsection{Regularity of solutions to elliptic equations}
\label{A:sec:Caccioppoli}

We will often use the following higher order analogue to the Caccioppoli inequality. It was proven in full generality in \cite{Bar16} and some important preliminary versions were established in \cite{Cam80,AusQ00}.
\begin{lem}[(The Caccioppoli inequality)]\label{A:lem:Caccioppoli}
Suppose that $L$ is an operator of the form~\eqref{A:eqn:L} of order~$2m$, $m\geq 1$, acting on functions defined in $\R^\dmn$, $\dmnMinusOne\geq 1$, associated to coefficients~$\mat A$ that satisfy the bounds \eqref{A:eqn:elliptic} and~\eqref{A:eqn:elliptic:bounded}.
Let $ u\in \dot W^2_m(B(X,2r))$ with $L u=0$ in $B(X,2r)$.
Then we have the bound
\begin{equation*}
\int_{B(X,r)} \abs{\nabla^j u(x,s)}^2\,dx\,ds
\leq \frac{C}{r^2}\int_{B(X,2r)} \abs{\nabla^{j-1} u(x,s)}^2\,dx\,ds
\end{equation*}
for any $j$ with $1\leq j\leq m$.
\end{lem}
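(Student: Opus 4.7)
The plan is to use the classical test-function method—a smooth cutoff combined with the G\r{a}rding inequality \eqref{A:eqn:elliptic} and the weak formulation \eqref{A:eqn:L} of $Lu=0$—treating the top-order case $j=m$ first and then deducing the remaining cases $1 \leq j < m$ by interpolation and iteration.

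For $j = m$: I would fix $\eta \in C_0^\infty(B(X, 2r))$ with $\eta \equiv 1$ on $B(X, r)$ and $\abs{\nabla^k \eta} \leq C r^{-k}$ for $0 \leq k \leq m$. Since $u \in \dot W^2_m(B(X, 2r))$, the extension of $\eta^m u$ by zero lies in $\dot W^2_m(\R^\dmn)$ and is compactly supported; applying the G\r{a}rding inequality to it yields
\[
\lambda \doublebar{\nabla^m(\eta^m u)}_{L^2(\R^\dmn)}^2 \leq \re \bigl\langle \nabla^m(\eta^m u), \mat A \nabla^m(\eta^m u)\bigr\rangle_{\R^\dmn}.
\]
Next I would expand both factors via the Leibniz formula. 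Since $\eta^m$ is scalar-valued it commutes with $\mat A$, so the leading piece is $\langle \eta^{2m}\nabla^m u, \mat A \nabla^m u\rangle$. I would then use $\eta^{2m} u$ as a test function in \eqref{A:eqn:L} to rewrite this leading piece as a sum of cross terms, while every other contribution already carries at least one derivative landing on $\eta$ (hence a factor $r^{-k}$) and at most $m-1$ derivatives of $u$. Young's inequality $\abs{XY} \leq \varepsilon X^2 + \varepsilon^{-1} Y^2$ with $\varepsilon$ small then absorbs every remaining occurrence of $\abs{\nabla^m u}^2$ into the left-hand side, producing the multi-step bound
\[
\int_{B(X, r)} \abs{\nabla^m u}^2 \leq C \sum_{k=0}^{m-1} r^{-2(m-k)} \int_{B(X, 2r)} \abs{\nabla^k u}^2.
\]

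To pass from this multi-step inequality to the one-step form in the statement, and to treat intermediate values $1 \leq j < m$, I would combine the above with the Gagliardo--Nirenberg interpolation inequality on a ball and an iteration/absorption lemma of the Campanato type, applied on a chain of nested balls $B(X, r_i)$ with $r = r_0 < r_1 < \cdots < r_N = 2r$ interpolated between the radii $r$ and $2r$. A scale-invariant hole-filling argument using the top-order bound on each intermediate ball then upgrades the multi-step estimate into $\int_{B(X, r)} \abs{\nabla^j u}^2 \leq C r^{-2}\int_{B(X, 2r)} \abs{\nabla^{j-1} u}^2$ for each $j$ in the range $1 \leq j \leq m$. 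The main obstacle is not the energy identity itself, which is standard, but this bookkeeping step: for an operator of order $2m$ there are many Leibniz error terms to organise, and the chain of nested balls must be controlled so that the absorbed constants do not blow up and the final estimate has precisely the one-step form with the factor $r^{-2}$, rather than a weighted sum over all lower-order derivatives.
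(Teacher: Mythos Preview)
The paper does not prove this lemma; it is quoted as a known result, with the remark that ``it was proven in full generality in \cite{Bar16} and some important preliminary versions were established in \cite{Cam80,AusQ00}.'' Your outline is the standard approach used in those sources: apply the G\r{a}rding inequality \eqref{A:eqn:elliptic} to $\eta^m u$, use $\eta^{2m}u$ as the test function in \eqref{A:eqn:L}, expand via Leibniz, and absorb to obtain the multi-step estimate; then pass to the one-step form by Gagliardo--Nirenberg interpolation combined with an iteration lemma on a chain of concentric balls. So there is no discrepancy to report, and your sketch is correct in substance.

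One clarification worth adding in a full write-up: for the cases $1\leq j<m$, the cleanest route is to first subtract from $u$ a polynomial $P$ of degree at most $m-1$ (which $L$ annihilates, since $\abs{\beta}=m$ in \eqref{A:eqn:L}) chosen via Poincar\'e so that all lower-order norms on the right of the multi-step bound collapse to $r^{-2}\int_{B(X,2r)}\abs{\nabla^{j-1}u}^2$; the interpolation inequality then controls $\int_{B(X,r)}\abs{\nabla^j u}^2$ in terms of the top-order piece already estimated and this same lower-order norm. Your paragraph gestures at this, but naming the polynomial subtraction explicitly makes the bookkeeping you flag as the ``main obstacle'' much more transparent.
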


We will also use the following reverse H\"older estimate for gradients and H\"older continuity of solutions to equations of high order. The statement for gradients may be found in \cite{Cam80,AusQ00,Bar16}; the local boundedness result is a straightforward consequence of Morrey's inequality.

\begin{thm}
\label{A:thm:Meyers}
Suppose that $L$ is an operator of the form~\eqref{A:eqn:L} of order~$2m$, $m\geq 1$, acting on functions defined in $\R^\dmn$, $\dmnMinusOne\geq 1$, associated to coefficients~$\mat A$ that satisfy the bounds \eqref{A:eqn:elliptic} and~\eqref{A:eqn:elliptic:bounded}.
Then there is some number $p^+=p_0^+=p^+_L>2$ depending only on $m$, the dimension $\dmn$ and the constants $\lambda$ and~$\Lambda$ in the bounds \eqref{A:eqn:elliptic} and~\eqref{A:eqn:elliptic:bounded}
such that the following statement is true.

Let $X_0\in\R^\dmn$ and let $r>0$. Suppose that $ u\in \dot W^2_m(B(X_0,2r))$ and that
$L u=0$
in $B(X_0,2r)$.
Suppose that $0<p<\infty$ and $0<q<p^+$. Then
\begin{align}
\label{A:eqn:Meyers}
\biggl(\int_{B(X_0,r)
}\abs{\nabla^m u}^q\biggr)^{1/q}
&\leq
	\frac{C(p,q)}{r^{\pdmn/p-\pdmn/q}}
	\biggl(\int_{B(X_0,2r)}\abs{\nabla^m u}^p\biggr)^{1/p}
\end{align}
for some constant $C(p,q)$ depending only on $p$, $q$ and the standard parameters.

We may also bound the lower-order derivatives. Suppose that $m-\pdmn/2 < m-k < m$ and that $0\leq m-k$.
Let $0<p<\infty$ and $0< q<p^+_k$, where
$p^+_k=p^+_L\,\pdmn/(\dmn-k\,p^+_L)$ if $\dmn>k\,p^+_L$ and $p^+_k=\infty$ if $\dmn\leq k\,p^+_L$.
Then
\begin{align}\label{A:eqn:Meyers:lower}
\biggl(\int_{B(X_0,r)
}\abs{\nabla^{m-k} u}^{q}\biggr)^{1/{q}}
&\leq
	\frac{C(p,q)}{r^{\pdmn/p-\pdmn/{q}}}
	\biggl(\int_{B(X_0,2r)}\abs{\nabla^{m-k} u}^{p}\biggr)^{1/p}
.\end{align}

Finally, suppose $m\geq \pdmn/2$. If
$0\leq m-k \leq m-\pdmn/2$, then $\nabla^{m-k} u$ is H\"older continuous and satisfies the bound
\begin{align}\label{A:eqn:Meyers:lowest}
\sup_{B(X_0,r)} \abs{\nabla^{m-k} u}
&\leq
	\frac{C(p)}{r^{\pdmn/p}}
	\biggl(\int_{B(X_0,2r)}\abs{\nabla^{m-k} u}^p\biggr)^{1/p}
\end{align}
provided that $0<p\leq \infty$.
\end{thm}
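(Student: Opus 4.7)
The plan is to prove the three bounds in succession, deriving each from the previous combined with classical tools. The key engine is Gehring's lemma applied to a reverse H\"older inequality coming from the Caccioppoli inequality (Lemma~\ref{A:lem:Caccioppoli}) and the Poincar\'e-Sobolev inequality; once the top-order estimate \eqref{A:eqn:Meyers} is in hand, the intermediate estimate \eqref{A:eqn:Meyers:lower} comes from iterated Sobolev embeddings, and the pointwise estimate \eqref{A:eqn:Meyers:lowest} from Morrey's inequality.

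First, to prove \eqref{A:eqn:Meyers}, I would start with the Caccioppoli bound $\fint_{B(X,r)}\abs{\nabla^m u}^2 \leq C r^{-2}\fint_{B(X,2r)}\abs{\nabla^{m-1} u}^2$ from Lemma~\ref{A:lem:Caccioppoli}. Since the left-hand side is invariant under adding a polynomial of degree $m-1$ to $u$, I would subtract such a polynomial on $B(X,2r)$ (chosen so that each component of $\nabla^{m-1}u$ minus the appropriate component of the polynomial has mean zero on the ball) and apply the Poincar\'e-Sobolev inequality. This yields a reverse H\"older inequality of the form $\bigl(\fint_{B(X,r)}\abs{\nabla^m u}^2\bigr)^{1/2} \leq C \bigl(\fint_{B(X,2r)}\abs{\nabla^m u}^{2_*}\bigr)^{1/2_*}$ with $2_* = 2\pdmn/(\pdmn+2) < 2$. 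The classical Gehring self-improvement lemma then upgrades this to the corresponding reverse H\"older inequality with the exponents $2$ and $p$ replaced by $2$ and $q$ for any $q < p^+$ for some $p^+ > 2$ depending only on the standard parameters. This proves \eqref{A:eqn:Meyers} for $p = 2$ and $q < p^+$; passing to arbitrary $0 < p < \infty$ uses a standard trick: the range $q \leq p$ follows from H\"older's inequality, while the range $p < 2$ follows by combining H\"older on a slightly larger ball with a covering or iteration argument to reduce back to the $p = 2$ case, as in \cite{Bar16}.

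For \eqref{A:eqn:Meyers:lower}, I would iterate the Gagliardo-Nirenberg-Sobolev inequality (Remark~\ref{A:rmk:GNS}) on $\nabla^{m-j}u$ for $j = 1, \ldots, k$. Since $\nabla^m u \in L^q_{\mathrm{loc}}$ for every $q < p^+$ by Part~1, and since each derivative of $\nabla^{m-j}u$ is a component of $\nabla^{m-j+1}u$, one applies Sobolev embedding on the ball (with the appropriate polynomial normalization chosen via the mean-zero condition) to conclude $\nabla^{m-j}u \in L^{q_j}_{\mathrm{loc}}$ where $q_j$ is the $j$-fold Sobolev conjugate. After $k$ iterations the threshold exponent $p^+_k$ of the statement emerges. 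The resulting inequality, with the exponents $p_k = 2_k$ and $q_k < p^+_k$, is then extended to arbitrary $0 < p_k < \infty$ and $0 < q_k < p^+_k$ by the same Holder-interpolation argument used at the top order.

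Finally, for \eqref{A:eqn:Meyers:lowest}, when $0 \leq m-k \leq m - \pdmn/2$ the iteration of Sobolev embeddings in Part~2 gives $\nabla^{m-k}u \in L^q_{\mathrm{loc}}$ for some $q > \pdmn$ (if $m - k < m - \pdmn/2$ we in fact reach $L^\infty_{\mathrm{loc}}$ directly by one more Sobolev step, while at the endpoint $m - k = m - \pdmn/2$ one stops at an exponent exceeding $\pdmn$). Morrey's inequality, applied to $\nabla^{m-k}u$ minus a suitable polynomial on $B(X_0, 2r)$, then gives both H\"older continuity and the pointwise bound \eqref{A:eqn:Meyers:lowest}. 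The main obstacle is the Gehring self-improvement step in Part~1, which requires the correct polynomial normalization so that Poincar\'e-Sobolev applies cleanly; the remaining work is bookkeeping to propagate the threshold $p^+$ through the successive Sobolev embeddings that define $p^+_k$ and to verify that the reduction to arbitrary $p$ goes through at every intermediate level.
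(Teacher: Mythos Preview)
The paper does not actually prove this theorem; it simply records the statement and cites \cite{Cam80,AusQ00,Bar16} for the gradient reverse H\"older estimate, remarking that ``the local boundedness result is a straightforward consequence of Morrey's inequality.'' Your outline---Caccioppoli plus Poincar\'e--Sobolev to obtain a subcritical reverse H\"older inequality, then Gehring's lemma to push above~$2$, then iterated Sobolev embedding and Morrey for the lower-order and pointwise parts---is exactly the standard argument carried out in those references, so your proposal is correct and matches the approach the paper defers to.
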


In particular, if $\dmn=2$, then $\nabla^{m-1}u$ is H\"older continuous. If $\mat A$ is $t$-independent and $\dmn=3$, then $\nabla^{m-1}u$ is still H\"older continuous; we will establish this fact as part of Lemma~\ref{A:lem:atomic} using an argument from \cite{AlfAAHK11}.

We compare this result to the well known De Giorgi-Nash-Moser estimates. If $m=1$ then $u=\nabla^{m-1}u$; De Giorgi, Nash and Moser showed that if $\mat A$ has real-valued coefficients and $m=1$ then the bound~\eqref{A:eqn:Meyers:lowest} (and H\"older continuity of $u=\nabla^{m-1}u$) is valid for $k=1$ irrespective of dimension. In the higher order case, no general comparable bound is known. (Even in the second order case, the De Giorgi-Nash-Moser result is not valid in arbitrary dimensions if $\mat A$ is allowed to be complex; see \cite{Fre08}.) The pointwise local bound \eqref{A:eqn:Meyers:lowest} was established in \cite{Bar16} using Morrey's inequality; this method yields a pointwise bound only for $k\geq \pdmn/2$, and in particular only if $m$ is large enough that $m-k\geq 0$ for some $k\geq \pdmn/2$.

\subsection{Reduction to operators of higher order}

It is often convenient to prove results in the case $2m\geq \dmn$ (in which case, by Theorem~\ref{A:thm:Meyers}, solutions to elliptic equations satisfy pointwise estimates). The following formulas are often useful in passing to the general case; that is, these formulas let us use results valid for the operator $\widetilde L$ of order $2\widetilde m$, $2\widetilde m\geq \dmn$, to establish results valid for the operator $L$ of order~$2m$, $2m\leq\dmnMinusOne$.

Choose some large number $M$. There are constants $\kappa_\zeta$ such that
\begin{equation*}\Delta^M = \sum_{\abs{\zeta}=M} \kappa_\zeta\, \partial^{2\zeta}.\end{equation*}
In fact, $\kappa_\zeta = M!/\zeta!$, where $\zeta!=\zeta_1!\,\zeta_2!\cdots\zeta_\dmn!$, and so we have that $\kappa_\zeta\geq 1$ for all $\abs{\zeta}=M$.

Define the differential operator $\widetilde L=\Delta^M \! L\Delta^M$; that is, $\langle \varphi, \widetilde L \psi\rangle = \langle \Delta^M \varphi, L \Delta^M\psi\rangle$ for all nice test functions $\varphi$ and~$\psi$. We remark that $\widetilde L$ is associated to coefficients~$\widetilde{\mat A}$ that satisfy
\begin{equation}\label{A:eqn:coefficients:high}
\widetilde A_{\delta\varepsilon}(x)
= \sum_{\substack{\alpha+2\zeta=\delta\\\beta+2\xi=\varepsilon}} \kappa_\zeta\,\kappa_\xi\, A_{\alpha\beta}(x)
=
	\sum_{\substack{\abs{\zeta}=M,\>2\zeta\leq \delta \\\abs{\xi}=M,\>2\xi\leq \varepsilon}} \kappa_\zeta\,\kappa_\xi\, A_{(\delta-2\zeta)(\varepsilon-2\xi)}(x)
\end{equation}
for all $\abs\delta=\abs\varepsilon=m+2M$, where $2\zeta\leq \delta $ if $2\zeta_j\leq \delta_j$ for all $1\leq j\leq \dmn$.
Observe that $\widetilde{\mat A}$ is $t$-independent and satisfies the bounds \eqref{A:eqn:elliptic} and~\eqref{A:eqn:elliptic:bounded}. It was shown in the proof of \cite[Theorem~62]{Bar16} that
\begin{equation}
\label{A:eqn:fundamental:high}
E^L(x,t,y,s) = \sum_{\abs{\zeta}=\abs{\xi}=M} \kappa_\zeta \, \kappa_\xi \, \partial^{2\zeta}_{x,t} \partial^{2\xi}_{y,s} E^{\widetilde L}(x,t,y,s).\end{equation}
Let $\arr g$ be an array of functions defined on~$\R^n$ and indexed by multiindices of length~$m$. Let
\begin{equation*}\arr {\widetilde g} = \sum_{\abs\gamma=m-1}\sum_{\abs\xi=M} \kappa_\xi \,g_\gamma(x)\arr e_{\gamma+2\xi}.\end{equation*}
Notice that $\abs{\arr{\widetilde g}(x)}\leq C \abs{\arr g(x)}$. Then, by \cite[formula~(11.2)]{BarHM17}, if $\abs\alpha=m$ then
\begin{align}\label{A:eqn:S:high}
\partial^\alpha \s^L \arr g(x,t)
&=
	\sum_{\abs{\zeta}=M}
	\kappa_\zeta \partial^{\alpha+2\zeta} \s^{\widetilde{L}} \arr{ \widetilde{g}}(x,t)
.\end{align}
Similarly, let
\begin{equation*}\arr {\widetilde h} = \sum_{\abs\alpha=m}\sum_{\abs\xi=M} \kappa_\xi \,h_\alpha(x)\arr e_{\alpha+2\xi}.\end{equation*} If $\abs\xi\in \{m-1,m\}$, then by formulas~\eqref{A:eqn:S:variant} and~\eqref{A:eqn:fundamental:high},
\begin{align}\label{A:eqn:S:variant:high}
\partial^\xi \s^L_\nabla \arr h(x,t)
&=
	\sum_{\abs{\zeta}=M}
	\kappa_\zeta \partial^{\xi+2\zeta} \s^{\widetilde{L}}_\nabla \arr{\widetilde{h}}(x,t)
.\end{align}

\subsection{Square function bounds on operators}

Our ultimate goal is to show that the gradients of the double and modified single layer potentials represent bounded operators, that is, that they satisfy the square-function estimates of Theorem~\ref{A:thm:square:rough}. There exist many known results that imply square-function or Carleson measure estimates on families of operators; in this section, we recall a few such results.

If $Q\subset\R^n$ is a cube and $r>0$, we let $rQ$ denote the concentric cube of volume $r^n\abs{Q}$.
We let
\begin{equation}\label{A:eqn:annuli}
A_0(Q) = 2Q, \qquad
A_j(Q) = 2^{j+1}Q\setminus 2^{j}Q\quad\text{for all $j\geq 1$}
.\end{equation}

\begin{lem}[{\cite[Lemma~3.5]{AlfAAHK11}}]
\label{A:lem:AAAHK}
(i) Suppose that $\{R_t\}_{t\in\R}$ is a family of operators satisfying the decay estimate
\begin{align}\label{A:eqn:AAAHK:decay}
\doublebar{R_t (f\1_{A_k(Q)})}_{L^2(Q)}^2
&\leq
C 2^{-k(n+4)}\doublebar{f}_{L^2({A_k(Q)})}^2
\end{align}
for all $f\in L^2$, all cubes $Q\subset\R^n$, all integers $k\geq 0$ and all $t$ with $\ell(Q)\leq t\leq 2\ell(Q)$.

Suppose also that
$R_t1=0$ for all $t\in\R$. (Our hypotheses allow $R_t1$ to be defined as a locally integrable function.) Then
\begin{equation*}\int_{\R^n} \abs{R_t h(x)}^2\,dx \leq C t^2 \int_{\R^n} \abs{\nabla_\pureH h(x)}^2\,dx\end{equation*}
for all $h\in \dot W^2_1(\R^n)$.

(ii) If in addition $\doublebar{R_t(\nabla_\pureH f)}_{L^2(\R^n)}\leq (C/t)\doublebar{f}_{L^2(\R^n)}$, then
\begin{equation*}\int_0^\infty \doublebar{R_t f}_{L^2(\R^n)}^2\,\frac{dt}{t}\leq C\doublebar{f}_{L^2(\R^n)}^2.\end{equation*}
\end{lem}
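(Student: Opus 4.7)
The plan is to establish part~(i) using the cancellation $R_t 1=0$ together with a dyadic decomposition that exploits the off-diagonal decay, and then to deduce part~(ii) by combining part~(i) with the additional gradient hypothesis via a Littlewood--Paley argument.

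For part~(i), I would fix $t>0$, partition $\R^n$ into a dyadic grid $\mathcal{D}_t$ of cubes of side length $t$, and, for each $Q\in\mathcal{D}_t$, use $R_t 1=0$ to rewrite $R_t h=R_t(h-h_{2Q})$ on $Q$ (interpreted via the locally integrable representative of $R_t 1$). Decomposing $h-h_{2Q}=\sum_{k\geq 0}(h-h_{2Q})\mathbf{1}_{A_k(Q)}$ and applying the off-diagonal bound~\eqref{A:eqn:AAAHK:decay} to each annular piece gives $\|R_t((h-h_{2Q})\mathbf{1}_{A_k(Q)})\|_{L^2(Q)}\leq C\,2^{-k(n+4)/2}\,\|h-h_{2Q}\|_{L^2(A_k(Q))}$. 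A telescoping Poincar\'e argument along the chain $2Q\subset 4Q\subset\cdots\subset 2^{k+1}Q$ then bounds $\|h-h_{2Q}\|_{L^2(A_k(Q))}$ by $C\,2^{k(n+2)/2}\,t\,\|\nabla h\|_{L^2(2^{k+1}Q)}$, producing the geometrically decaying annular estimate $Ct\,2^{-k}\|\nabla h\|_{L^2(2^{k+1}Q)}$. Summing in $k$ by Cauchy--Schwarz with weights, and then in $Q\in\mathcal{D}_t$ using a Young/Schur-type inequality on the multiplicity of the enlarged cubes $\{2^{k+1}Q\}$, will yield the target bound $\|R_t h\|_{L^2(\R^n)}^2\leq Ct^2\|\nabla h\|_{L^2(\R^n)}^2$.

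For part~(ii), with part~(i) and the additional hypothesis $\|R_t(\nabla_\pureH f)\|_{L^2}\leq(C/t)\|f\|_{L^2}$ in hand, I would invoke a Littlewood--Paley decomposition $f=\sum_j P_j f$ with $P_j$ localizing (approximately) to frequencies $|\xi|\sim 2^j$, so that $\sum_j\|P_j f\|_{L^2}^2\leq C\|f\|_{L^2}^2$. Part~(i) applied to $P_jf$ gives $\|R_tP_jf\|_{L^2}\leq Ct\,2^j\|P_jf\|_{L^2}$, while writing $P_jf$ as a divergence at its natural scale and applying the hypothesis gives $\|R_tP_jf\|_{L^2}\leq C\,t^{-1}2^{-j}\|P_jf\|_{L^2}$; taking the better of the two yields $\|R_tP_jf\|_{L^2}\leq C\min(t\,2^j,\,2^{-j}/t)\|P_jf\|_{L^2}$. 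Inserting $R_tf=\sum_j R_tP_jf$ into $\int_0^\infty\|R_tf\|_{L^2}^2\,\tfrac{dt}{t}$ and expanding the square produces a bilinear sum whose kernel $K(j,j')=\int_0^\infty\min(t 2^j,2^{-j}/t)\min(t 2^{j'},2^{-j'}/t)\,\tfrac{dt}{t}$ satisfies $K(j,j')\lesssim(1+|j-j'|)\,2^{-|j-j'|}$, so Schur's test controls the bilinear form by $C\sum_j\|P_jf\|_{L^2}^2\leq C\|f\|_{L^2}^2$.

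The hard part will be the combinatorial bookkeeping in part~(i): the $2^{-k(n+4)}$ off-diagonal gain, the Poincar\'e growth $2^{k(n+2)}$ of $\|h-h_{2Q}\|_{L^2(A_k(Q))}^2$, and the overlap multiplicity $\sim 2^{kn}$ of the cubes $\{2^{k+1}Q\}_{Q\in\mathcal{D}_t}$ must combine into a convergent series. Arranging the Cauchy--Schwarz weights and the order of summation carefully---so that the net geometric decay in the annular index dominates the overlap---is where the real content of the argument lies; once this is in place, part~(ii) reduces to a fairly routine Littlewood--Paley plus Schur computation.
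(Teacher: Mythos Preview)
The paper does not prove this lemma; it is quoted from \cite{AlfAAHK11}. Your overall strategy for both parts is the standard one and is correct in spirit, but the numerology you write down for part~(i) does not close, and no rearrangement of weights or order of summation will fix it as stated.

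Concretely: your telescoping Poincar\'e bound
\[
\|h-h_{2Q}\|_{L^2(A_k(Q))}\ \le\ C\,2^{k(n+2)/2}\,t\,\|\nabla h\|_{L^2(2^{k+1}Q)}
\]
is true and sharp (test it on a smooth bump equal to $1$ on $2Q$ and $0$ off $4Q$). Combining it with the off-diagonal decay $2^{-k(n+4)/2}$ gives the annular piece $Ct\,2^{-k}\|\nabla h\|_{L^2(2^{k+1}Q)}$. After Cauchy--Schwarz in $k$ and summing over $Q\in\mathcal{D}_t$, the overlap of the cubes $2^{k+1}Q$ contributes $2^{kn}$, so the net exponent is $-(n+4)+(n+2)+n=n-2$, which is nonnegative for $n\ge 2$. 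Since these exponents are fixed, Cauchy--Schwarz can only redistribute, not manufacture, decay; your claim that ``arranging the weights carefully'' will make the series converge is incorrect.

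The fix is not to lump the telescoping chain into a single term $\|\nabla h\|_{L^2(2^{k+1}Q)}$. Keep the scales separated:
\[
\|h-h_{2Q}\|_{L^2(A_k(Q))}\ \lesssim\ 2^kt\,\|\nabla h\|_{L^2(2^{k+1}Q)}\ +\ t\sum_{j=1}^{k}2^{(k-j)n/2}\,2^{j}\,\|\nabla h\|_{L^2(2^{j+1}Q)}.
\]
Insert this into $\sum_k 2^{-k(n+4)/2}(\cdots)$ and interchange the order of summation in $(j,k)$; the inner sum $\sum_{k\ge j}2^{-2k}$ collapses, and both pieces reduce to
\[
\|R_th\|_{L^2(Q)}\ \lesssim\ t\sum_{j\ge 0}2^{-j(n+2)/2}\,\|\nabla h\|_{L^2(2^{j+1}Q)}.
\]
Now the decay $2^{-j(n+2)}$ (after squaring) beats the overlap $2^{jn}$ with room to spare, and Cauchy--Schwarz with any weight $2^{-j\varepsilon}$, $0<\varepsilon<2$, finishes part~(i). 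Your outline for part~(ii) via Littlewood--Paley and Schur is correct as written.
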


Notice that the decay estimate \eqref{A:eqn:AAAHK:decay}, if valid for all cubes~$Q$ and all $k\geq 0$, implies that $R_t$ is bounded on $L^2(\R^n)$, uniformly in~$t$. We observe that the estimate given above is simpler than that originally stated in \cite{AlfAAHK11}.

There is a very long history of results relating square-function estimates to Carleson measure estimates. For our purposes, the following result suffices.

\begin{lem}[{\cite[Lemma~9.1]{BarHM17}}]\label{A:lem:square:carleson}
Let $\{\Theta_t\}_{t>0}$ be a family of linear operators that satisfy
\begin{equation}\label{A:eqn:square:carleson}\int_0^\infty \int_{\R^n}\abs{\Theta_t\arr f(x)}^2\,\frac{1}{t}\,dx\,dt
\leq C_0\doublebar{\arr f}_{L^2(\R^n)}^2\end{equation}
for some constant $C_0$ and for all $\arr f\in L^2(\R^n)$.
Suppose that there exists some $\varepsilon>0$ and $C_1>0$ such that
\begin{equation}
\label{A:eqn:Carleson:decay}
\sup_{\ell(Q)\leq t\leq 2\ell(Q)}
\int_Q\abs{\Theta_t(\1_{A_j(Q)}\arr g)(x)}^2\,dx
\leq C_1 2^{-j(n+\varepsilon)}\int_{A_j(Q)} \abs{\arr g(x)}^2\,dx
\end{equation}
whenever $Q\subset\R^n$ is a cube and $j\geq 1$ is an integer.
Then there is some constant $C$ depending only on $C_0$, $C_1$ and the dimension $\dmn$ such that the Carleson measure estimate
\begin{equation}\label{A:eqn:Carleson:AAAHK11}\sup_{Q\subset\R^n} \frac{1}{\abs{Q}}\int_Q \int_0^{\ell(Q)} \abs{\Theta_t\arr b(x)}^2\,\frac{1}{t}\,dx\,dt
\leq C\doublebar{\arr b}_{L^\infty(\R^n)}^2\end{equation}
is valid.
\end{lem}

\begin{rmk}\label{A:rmk:interpolation:carleson}
Suppose that $\Theta_t$ satisfies the estimates of Lemma~\ref{A:lem:square:carleson}. Notice that we may write the square-function estimate~\eqref{A:eqn:square:carleson} as $\doublebar{\mathcal{A}_2^+(t^{-1}\Theta_t \arr f)}_{L^2(\R^n)}\leq C\doublebar{\arr f}_{L^2(\R^n)}$.
By tent space interpolation \cite[Section~7]{CoiMS85}, $\Theta_t$ also satisfies the estimate
\begin{equation*}\doublebar{\mathcal{A}_2^+(\Theta_t \arr f)}_{L^p(\R^n)}\leq C(p)\doublebar{\arr f}_{L^p(\R^n)}\end{equation*}
for any $2\leq p<\infty$.
\end{rmk}

Here the Lusin area integral $\mathcal{A}_2^+$ is given by
\begin{equation}\label{A:eqn:lusin}
\mathcal{A}_2^\pm H(x)
= \biggl( \int_{\Gamma_\pm(x)} \abs{H(y,t)}^2\frac{dy\,dt}{t^\dmn}\biggr)^{1/2},
\end{equation}
where
\begin{align*}\Gamma_+(x)&=\{(y,t):0<t<\infty, \>\abs{x-y}<t\},
\\
\Gamma_-(x)&=\{(y,t):-\infty<t<0, \>\abs{x-y}<\abs{t}\}.\end{align*}

\subsection{Regularity along horizontal slices}

In Section~\ref{A:sec:Caccioppoli} we reviewed results showing that solutions to elliptic equations are regular in that their gradients are locally in $L^p(\R^\dmn)$ for some $p>2$.

Solutions to elliptic equations with $t$-independent coefficients display further regularity; specifically, their gradients are locally in $L^p(\R^n\times\{t\})$ for any $t\in\R$.

The following lemma was proven in the case $m=1$ in \cite[Proposition 2.1]{AlfAAHK11} and generalized to the case $m\geq 2$, $p=2$ in \cite[Lemma~3.2]{BarHM17}.
\begin{lem}\label{A:lem:slices}
Let $t$ be a constant, and let $Q\subset\R^n$ be a cube.
Suppose that $\partial_s \arr u(x,s)$ satisfies the Caccioppoli-like inequality
\begin{equation*}
\biggl(\int_{B(X,r)} \abs{\partial_s \arr u(x,s)}^p\,dx\,ds\biggr)^{1/p}
\leq \frac{c_0}{r}\biggl(\int_{B(X,2r)} \abs{\arr u(x,s)}^p\,dx\,ds\biggr)^{1/p}
\end{equation*}
whenever $B(X,2r)\subset\{(x,s):x\in 2Q, t-\ell(Q)<s<t+\ell(Q)\}$, for some $1\leq p\leq\infty$.
Then
\begin{equation*}\biggl(\int_Q \abs{\arr u(x,t)}^p\,dx\biggr)^{1/p} \leq C(p,c_0)\,\ell(Q)^{-1/p}
\biggl(
\int_{2Q}\int_{t-\ell(Q)}^{t+\ell(Q)} \abs{\arr u(x,s)}^p\,ds\,dx\biggr)^{1/p}.\end{equation*}

In particular, if $u\in \dot W^2_m(2Q\times(t-\ell(Q),t+\ell(Q)))$ and $Lu=0$ in $2Q\times(t-\ell(Q),t+\ell(Q))$, and $L$ is as in Theorem~\ref{A:thm:Meyers}, then
\begin{equation*}\int_Q \abs{\nabla^{m-j} \partial_t^k u(x,t)}^p\,dx \leq \frac{C(p)}{\ell(Q)}
\int_{2Q}\int_{t-\ell(Q)}^{t+\ell(Q)} \abs{\nabla^{m-j} \partial_s^k u(x,s)}^p\,ds\,dx\end{equation*}
for any $0\leq j\leq m$, any $0< p < p_j^+$, and any integer $k\geq 0$, where $p_j^+$ is as in Theorem~\ref{A:thm:Meyers}.
\end{lem}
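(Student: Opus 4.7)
My plan is to first prove the general (Caccioppoli-abstract) slice estimate by a one-variable fundamental-theorem-of-calculus argument combined with a Whitney-type covering, and then verify the Caccioppoli-like hypothesis for $\arr u=\nabla^{m-j}\partial_t^k u$ by using $t$-independence of $\mat A$ together with Lemma~\ref{A:lem:Caccioppoli} and the Meyers reverse Hölder inequalities of Theorem~\ref{A:thm:Meyers}. For the abstract step, pick a smooth cutoff $\varphi\colon\R\to[0,1]$ supported in $[t,t+\ell(Q)/2]$ with $\varphi(t)=1$ and $|\varphi'|\leq C/\ell(Q)$; the fundamental theorem of calculus gives
\begin{equation*}
\arr u(x,t)=-\int_t^{t+\ell(Q)/2}\varphi'(s)\,\arr u(x,s)\,ds-\int_t^{t+\ell(Q)/2}\varphi(s)\,\partial_s\arr u(x,s)\,ds,
\end{equation*}
and taking the $L^p(Q)$ norm in~$x$ and applying Hölder's inequality in~$s$ yields, with $J=[t,t+\ell(Q)/2]$,
\begin{equation*}
\doublebar{\arr u(\,\cdot\,,t)}_{L^p(Q)}\leq C\ell(Q)^{-1/p}\doublebar{\arr u}_{L^p(Q\times J)}+\ell(Q)^{1-1/p}\doublebar{\partial_s\arr u}_{L^p(Q\times J)}
\end{equation*}
(with the obvious modifications when $p=\infty$).

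\textbf{Controlling the $\partial_s$-term.} To handle the second term, cover $Q\times J$ by finitely overlapping balls $B(X_i,\ell(Q)/8)$; every point of $Q\times J$ sits at distance at least $\ell(Q)/2$ from the boundary of the reference slab $2Q\times(t-\ell(Q),t+\ell(Q))$, so each doubled ball $B(X_i,\ell(Q)/4)$ lies in this slab and the hypothesis applies with $r=\ell(Q)/8$. Summing with the finite-overlap constant gives
\begin{equation*}
\doublebar{\partial_s\arr u}_{L^p(Q\times J)}^p\leq C\ell(Q)^{-p}\doublebar{\arr u}_{L^p(2Q\times(t-\ell(Q),t+\ell(Q)))}^p,
\end{equation*}
which, combined with the previous display, yields the general slice estimate.

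\textbf{Specialization to solutions.} Because $\mat A$ is $t$-independent, differentiating $Lu=0$ in~$t$ shows that every vertical derivative $\partial_t^\ell u$ is itself a solution. For $\arr u=\nabla^{m-j}\partial_t^k u$ with $j\geq 1$, the quantity $\partial_s\arr u=\nabla^{m-j}\partial_t^{k+1}u$ is dominated pointwise by $|\nabla^{m-j+1}\partial_t^k u|$, and Lemma~\ref{A:lem:Caccioppoli} applied to the solution $\partial_t^k u$ at order $m-j+1\leq m$ supplies the $L^2$ form of the required Caccioppoli-like inequality. When $j=0$, one instead applies Lemma~\ref{A:lem:Caccioppoli} to the solution $\partial_t^{k+1}u$ to control $\nabla^m\partial_t^{k+1}u$ in $L^2$ by $\nabla^{m-1}\partial_t^{k+1}u=\nabla^{m-1}\partial_s(\partial_t^k u)$, which is in turn dominated pointwise by $|\nabla^m\partial_t^k u|$. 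The $L^2$ inequality is then upgraded to the full range $0<p<p_j^+$ by sandwiching the $L^2$ Caccioppoli between two applications of the reverse Hölder inequalities \eqref{A:eqn:Meyers} or \eqref{A:eqn:Meyers:lower} on slightly enlarged balls. This last bookkeeping is the main obstacle: the admissible exponent $p_j^+$ arises precisely because the Meyers self-improvement must be applied to the specific derivative $\nabla^{m-j}$ of the solution rather than to $\nabla^m$, so matching the reverse Hölder range to the claimed $p_j^+$ is the only nontrivial step.
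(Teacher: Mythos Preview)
Your proof is correct and follows essentially the same approach as the paper: a fundamental-theorem-of-calculus argument in the vertical variable (you use a smooth cutoff where the paper subtracts the vertical average over $[t,t+\ell(Q)/2]$) followed by the Caccioppoli-like hypothesis to absorb the $\partial_s$ term. Your treatment is more detailed than the paper's, which simply writes ``Applying the Caccioppoli inequality completes the proof'' and leaves both the Whitney covering and the verification of the hypothesis for solutions implicit.
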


\begin{proof}
Begin by observing that
\begin{align*}
\biggl(\int_Q \abs{\arr u(x,t)}^p\,dx \biggr)^{1/p}
&\leq
	\biggl(\int_Q \abs[bigg]{\arr u(x,t)-\fint_{t}^{t+\ell(Q)/2} \arr u(x,s)\,ds}^p\,dx \biggr)^{1/p}
	\\&\qquad+
	\biggl(\int_Q \fint_{t}^{t+\ell(Q)/2} \abs{\arr u(x,s)}^p\,ds\,dx \biggr)^{1/p}
.\end{align*}
But
\begin{align*}
\int_Q \abs[bigg]{\arr u(x,t)-\fint_{t}^{t+\ell(Q)/2} \arr u(x,s)\,ds}^p\,dx
&\leq
	\int_Q \abs[bigg]{\fint_{0}^{\ell(Q)/2}
	\int_0^s \partial_r \arr u(x,t+r)\,dr\,ds}^p\,dx
\\&\leq
	\biggl(\frac{\ell(Q)}{2}\biggr)^{p-1}
	\int_Q \int_{0}^{\ell(Q)/2}
	\abs{\partial_r \arr u(x,t+r)}^p\,dr\,dx
.\end{align*}
Applying the Caccioppoli inequality completes the proof.
\end{proof}

\section{Vertical derivatives of \texorpdfstring{$\s^L$}{the single layer potential}: area integral estimates for \texorpdfstring{$p>2$}{p>2}}

\label{A:sec:S:vertical}

In this section we establish some Carleson measure estimates on certain derivatives of the single layer potential; these are at present the best known estimates on layer potentials with $L^\infty$ inputs. These estimates will be used in Section~\ref{A:sec:S:square:variant} to establish the bound~\eqref{A:eqn:S:square:variant}.

\begin{lem}\label{A:lem:lusin:S}
Suppose that $L$ is an operator of the form~\eqref{A:eqn:L} of order~$2m$, $m\geq 1$, acting on functions defined in $\R^\dmn$, $\dmnMinusOne\geq 1$, associated to coefficients~$\mat A$ that are $t$-independent in the sense of formula~\eqref{A:eqn:t-independent} and satisfy the bounds \eqref{A:eqn:elliptic} and~\eqref{A:eqn:elliptic:bounded}.

Suppose that $k$ is an integer with $k\geq \pdmn/2$.
Then the Carleson measure estimate
\begin{equation}
\label{A:eqn:S:Carleson}\sup_{Q\subset\R^n} \frac{1}{\abs{Q}}\int_Q \int_0^{\ell(Q)} \abs{t^k\partial_t^{m+k}\s^L\arr b(x,t)}^2\,\frac{1}{t}\,dx\,dt
\leq C\doublebar{\arr b}_{L^\infty(\R^n)}^2\end{equation}
is valid, where the supremum is taken over cubes~$Q\subset\R^n$. By the Caccioppoli inequality, the estimate
\begin{equation}\label{A:eqn:S:Carleson:full}\sup_{Q\subset\R^n} \frac{1}{\abs{Q}}\int_Q \int_0^{\ell(Q)} \abs{t^{k+m} \nabla^m\partial_t^{m+k}\s^L\arr b(x,t)}^2\,\frac{1}{t}\,dx\,dt
\leq C\doublebar{\arr b}_{L^\infty(\R^n)}^2\end{equation}
is also valid. Corresponding estimates are valid in the lower half-space.

Furthermore, the bound
\begin{equation}
\label{A:eqn:lusin:p}\doublebar{\mathcal{A}_2^\pm (t^k\,\partial_t^{m+k} \s^L \arr g)}_{L^q(\R^n)}\leq C(q) \doublebar{\arr g}_{L^q(\R^n)}
\end{equation}
is valid for all $2\leq q<\infty$, where $\mathcal{A}_2^\pm$ is as in formula~\eqref{A:eqn:lusin}.
\end{lem}

\begin{proof} By Remark~\ref{A:rmk:lower}, it suffices to work in the upper half-space.
We will use Lemma~\ref{A:lem:square:carleson} and Remark~\ref{A:rmk:interpolation:carleson}.
Let $\Theta_t\arr g(x) = t^{k}\partial_t^{m+k}\s^L \arr g(x,t)$.
We claim that if $k\geq 1$ then $\Theta_t$ satisfies the square function estimate~\eqref{A:eqn:square:carleson}. To see this, observe that
\begin{equation*}\int_{\R^n}\int_0^\infty \abs{\Theta_t\arr g(x)}^2\,\frac{1}{t}\,dx\,dt
=\int_{\R^n}\int_0^\infty t^{2k-1}\abs{\partial_t^{m+k}\s^L \arr g(x,t)}^2\,dx\,dt.\end{equation*}
If $k=1$ then by the bound~\eqref{A:eqn:S:square} we are done. Suppose $k\geq 2$.
Let $\mathcal{G}$ be a grid of Whitney cubes in $\R^\dmn_+$; that is, $\R^\dmn_+=\cup_{Q\in\mathcal{G}} \overline{Q}$, the cubes in $\mathcal{G}$ are pairwise-disjoint, and if $Q\in\mathcal{G}$ then $\ell(Q)=\dist(Q,\partial\R^\dmn_+)$. We then have that
\begin{equation*}\int_0^\infty \int_{\R^n}\abs{\Theta_t\arr g(x)}^2\,\frac{1}{t}\,dx\,dt
\leq
\sum_{Q\in \mathcal{G}}
(2\ell(Q))^{2k-1}
\int_{Q}\abs{\partial_t^{m+k}\s^L \arr g(x,t)}^2\,dx\,dt.\end{equation*}
If $k\geq 2$, then by the Caccioppoli inequality (Lemma~\ref{A:lem:Caccioppoli}) and a covering argument,
\begin{equation*}\int_0^\infty \int_{\R^n}\abs{\Theta_t\arr g(x)}^2\,\frac{1}{t}\,dx\,dt
\leq
\sum_{Q\in \mathcal{G}}
C\ell(Q)
\int_{2Q}\abs{\partial_t^{m+1}\s^L \arr g(x,t)}^2\,dx\,dt.\end{equation*}
But if $x\in\R^\dmn_+$, then $x\in 2Q$ for at most $C$ cubes $Q\in\mathcal{G}$, and so
\begin{equation*}\int_{\R^n}\int_0^\infty \abs{\Theta_t\arr g(x)}^2\,\frac{1}{t}\,dx\,dt
\leq
C\int_0^\infty\int_{\R^n}
\abs{\partial_t^{m+1}\s^L \arr g(x,t)}^2\,t\,dx\,dt.\end{equation*}
By the bound~\eqref{A:eqn:S:square}, $\Theta_t$ satisfies the square-function estimate~\eqref{A:eqn:square:carleson}.

By formula~\eqref{A:eqn:S:fundamental} for the single layer potential, if $Q\subset\R^n$ is a cube and $A_j(Q)$ is as in Lemma~\ref{A:lem:square:carleson}, then
\begin{multline*}\int_Q \abs{\partial_t^{m+k}\s^L(\1_{A_j(Q)}\arr g)(x,t)}^2\,dx
\\\begin{aligned}
&=
	\int_{Q} \abs[bigg]{\sum_{\abs\gamma=m-1}\int_{A_j(Q)} \partial_t^{m+k} \partial_{y,s}^\gamma E^L(x,t,y,0)\,g_\gamma(y)\,dy}^2\,dx
\\&\leq
	\doublebar{\arr g}_{L^2(A_j(Q))}^2
	\int_{Q}
	\int_{A_j(Q)} \abs{\partial_t^{m+k}\nabla_{y,s}^{m-1} E^L(x,t,y,0)}^2\,dy\,dx.
\end{aligned}\end{multline*}
By Lemma~\ref{A:lem:slices}, if $\ell(Q)\leq t\leq 2 \ell(Q)$
then
\begin{multline*}\int_Q \abs{\partial_t^{m+k}\s^L(\1_{A_j(Q)}\arr g)(x)}^2\,dx
\\\leq
	\frac{C}{2^j t^2}\doublebar{\arr g}_{L^2(A_j(Q))}^2
	\int_{\widetilde A_{j,1}(Q)} \int_{2\widetilde Q} \abs{\partial_r^{m+k} \nabla^{m-1}_{y,s} E^L(x,r,y,s)}^2\,dx\,dr\,dy\,ds
\end{multline*}
where
$2\widetilde Q = 2Q\times (t/2,2t)$ and \begin{equation*} \widetilde A_{j,1}(Q) = (A_{j-1}(Q)\cup A_j(Q)\cup A_{j+1}(Q)) \times (-2^{j-4}\ell(Q),2^{j-4}\ell(Q)).\end{equation*}
Notice that $\diam 2\widetilde Q\approx t\approx \ell(Q)$, $\diam \widetilde A_{j,1}(Q)\approx 2^j\ell(Q)$ and $\dist (2\widetilde Q,\widetilde A_{j,1}(Q)) \approx 2^j\ell(Q)$. (Here $A\approx B$ if $A\leq CB$ and $B\leq CA$, for some $C$ depending only on the dimension~$\dmn$.)

Applying the bound \eqref{A:eqn:fundamental:far} and the Caccioppoli inequality, we have that
\begin{equation}\label{A:eqn:S:decay:high}\int_Q \abs{\partial_t^{m+k}\s^L(\1_{A_j(Q)}\arr g)(x)}^2\,dx
\leq
	\frac{C}{ t^{2k}}\doublebar{\arr g}_{L^2(A_j(Q))}^2
	2^{j-2kj-j\varepsilon}
.\end{equation}
Thus,
$\Theta_t=t^k\partial_t^{m+k}\s^L$ satisfies the bound \eqref{A:eqn:Carleson:decay} for any
$k\geq \pdmn/2$.
By Lemma~\ref{A:lem:square:carleson}, $\Theta_t$ satisfies the Carleson measure estimate~\eqref{A:eqn:Carleson:AAAHK11}, and so $\s^L$ satisfies the bound~\eqref{A:eqn:S:Carleson}. We derive the bound~\eqref{A:eqn:S:Carleson:full} using the Caccioppoli inequality in Whitney cubes as before. By Remark~\ref{A:rmk:interpolation:carleson}, the bound \eqref{A:eqn:lusin:p} is valid for $2<q<\infty$.
\end{proof}

\begin{rmk} An important part of the proof of Lemma~\ref{A:lem:lusin:S} was the the decay estimate~\eqref{A:eqn:S:decay:high}.
By the same argument, and under the same assumptions on~$\mat A$, we have the decay estimate on the modified single layer potential
\begin{equation}
\label{A:eqn:S:variant:decay}
\int_Q \abs{t^{k+1} \partial_t^{m+k}\s^L_\nabla(\arr h\1_{A_j(Q)})(x,t)}^2
\,dx
\leq
	C2^{-j-2kj-j\varepsilon}
	\int_{A_j(Q)}\abs{\arr h}^2
\end{equation}
for any $k\geq 0$, any $j\geq 0$ and any $\ell(Q)\leq t\leq 2\ell(Q)$.
\end{rmk}

\section{\texorpdfstring{$\s^L_\nabla$}{The modified single layer potential}: square-function estimates}

\label{A:sec:S:square:variant}

Recall from \cite{BarHM17} (the bounds \eqref{A:eqn:D:square} and \eqref{A:eqn:S:square} above) that the double and single layer potentials satisfy square-function estimates. We would like to prove the following analogous bound for the modified single layer potential.

\begin{thm}\label{A:thm:S:square:variant}
Suppose that $L$ is an operator of the form~\eqref{A:eqn:L} of order~$2m$, $m\geq 1$, acting on functions defined in $\R^\dmn$, $\dmnMinusOne\geq 1$, associated to coefficients~$\mat A$ that are $t$-independent in the sense of formula~\eqref{A:eqn:t-independent} and satisfy the bounds \eqref{A:eqn:elliptic} and~\eqref{A:eqn:elliptic:bounded}.

Then the modified single layer potential $\s^L_\nabla$ satisfies the square-function estimate
\begin{align}\label{A:eqn:S:square:variant}
\int_{\R^n}\int_{-\infty}^\infty \abs{\nabla^m \s^L_\nabla \arr h(x,t)}^2\,\abs{t}\,dt\,dx
	& \leq C \doublebar{\arr h}_{L^2(\R^n)}^2
\end{align}
for all $\arr h\in L^2(\R^n)$.
\end{thm}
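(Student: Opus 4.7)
The plan is to split $\arr h = \sum_{|\alpha|=m} h_\alpha \arr e_\alpha$ according to whether the multiindex $\alpha$ has $\alpha_{n+1}\geq 1$ (the \emph{vertical} case) or $\alpha_{n+1}=0$ (the \emph{horizontal} case), and handle the two cases separately; by Remark~\ref{A:rmk:lower} it suffices to work in the upper half-space.

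For a vertical multiindex, I would write $\alpha = \gamma+\vec e_{n+1}$ with $|\gamma|=m-1$. Then the identity \eqref{A:eqn:S:S:vertical} gives $\nabla^m\s^L_\nabla(h_\alpha\arr e_\alpha) = -\nabla^m\partial_t\s^L(h_\alpha\arr e_\gamma)$, and the known bound~\eqref{A:eqn:S:square} from \cite{BarHM15p} immediately produces the desired square-function estimate. So the real burden of the proof lies in the horizontal case.

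For a horizontal $\alpha = \gamma+\vec e_j$ with $1\leq j\leq n$, I plan to apply the $T1$-style result Lemma~\ref{A:lem:AAAHK}(ii) to the family $R_t h = t\nabla^m\s^L_\nabla(h\arr e_\alpha)(\cdot,t)$, so that $\int_0^\infty\|R_t h\|_{L^2(\R^n)}^2\,dt/t$ coincides exactly with the square function on the left of \eqref{A:eqn:S:square:variant}. Three hypotheses must be verified. The off-diagonal decay \eqref{A:eqn:AAAHK:decay} follows from the estimate \eqref{A:eqn:S:variant:decay} already produced in the previous section, transferred from $\partial_t^{m+k}$ to $\nabla^m$ by the Caccioppoli inequality (Lemma~\ref{A:lem:Caccioppoli}) on Whitney balls and the slice bound Lemma~\ref{A:lem:slices}; to reach the exponent $n+4$ demanded by Lemma~\ref{A:lem:AAAHK}, I would pass to the higher-order operator $\widetilde L = \Delta^M L\Delta^M$ via \eqref{A:eqn:S:variant:high}, absorbing additional derivatives before applying the fundamental-solution estimate \eqref{A:eqn:fundamental:far}, or invoke Meyers's reverse-Hölder estimate \eqref{A:eqn:Meyers}. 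The condition $R_t 1 = 0$ is immediate from the kernel representation \eqref{A:eqn:S:variant}: because every component of $\alpha$ is horizontal, integrating by parts in $y$ (legal thanks to the decay of $\nabla_{x,t}^m\nabla_{y,s}^m E^L$ at infinity provided by \eqref{A:eqn:fundamental:far}) shows that the constant function is annihilated.

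The main obstacle will be the third condition, the pointwise-in-$t$ bound
\[
\|R_t(\nabla_\pureH f)\|_{L^2(\R^n)} \leq (C/t)\|f\|_{L^2(\R^n)}.
\]
By formula \eqref{A:eqn:S:S:horizontal} we have $R_t(\partial_k f) = -t\nabla^m\s^L(\partial_j\partial_k f\arr e_\gamma)(\cdot,t)$, whose kernel, after integrating by parts twice in $y$, is $-t\,\nabla^m_{x,t}\,\partial^{\gamma+\vec e_j+\vec e_k}_{y,s}E^L(x,t,y,0)$; this carries $m$ derivatives on the $x$-side and $m+1$ purely horizontal derivatives on the $y$-side, and the $t$-independence identity \eqref{A:eqn:fundamental:vertical} offers no relief because no $y$-derivative is vertical. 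The plan is to dominate the slice $L^2$ norm at height $t$ by the $L^2$ norm on the Whitney slab $\R^n\times(t/2,3t/2)$ via Lemma~\ref{A:lem:slices}, then represent $\s^L(\partial_j\partial_k f\arr e_\gamma)$ as $\Pi^L(\1_+\arr G)$ for a carefully chosen extension $\arr G$ of $\partial_j\partial_k f\arr e_\gamma$ (constructed using Lemma~\ref{A:lem:Besov} or directly on the Fourier side) with $\|\arr G\|_{L^2(\R^{n+1}_+)}\lesssim t^{-1/2}\|f\|_{L^2(\R^n)}$, and finally invoke the $L^2$-boundedness of $\nabla^m\Pi^L$ from Theorem~\ref{A:thm:fundamental} to extract the required $C/t$ factor. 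Once all three hypotheses of Lemma~\ref{A:lem:AAAHK}(ii) are in hand, its conclusion $\int_0^\infty\|R_t h\|_{L^2}^2\,dt/t\leq C\|h\|_{L^2}^2$ finishes the horizontal case and, combined with the vertical case, proves the theorem.
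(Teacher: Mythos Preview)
Your reduction to the horizontal case is correct and matches the paper's first step. The fatal gap is the claim that $R_t 1 = 0$ via integration by parts. The bound~\eqref{A:eqn:fundamental:far} with $q=s=0$ gives only $\int_{B(Y_0,r)}\int_{B(X_0,R)}|\nabla^m_X\nabla^m_Y E^L|^2 \leq C$, with \emph{no} decay in~$R$; even passing to $\nabla^m_X\nabla^{m-1}_Y E^L$ (the quantity left after one integration by parts) yields only the factor $(r/R)^\varepsilon$, which is far short of the integrability over $\R^n$ needed to kill the boundary terms at infinity. In fact $\nabla^m\s^L_\nabla(\arr e_\alpha)$ is generically \emph{not} zero for a purely horizontal~$\alpha$: the entire content of Lemma~\ref{A:lem:S:carleson:preliminary} is a Carleson-measure estimate on (vertical derivatives of) exactly this quantity, and its proof invokes the Kato-square-root test functions of~\cite{AusHMT01}. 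If $R_t 1$ vanished, Lemma~\ref{A:lem:S:carleson:preliminary} would be vacuous and the whole architecture of Section~\ref{A:sec:S:square:variant} unnecessary.

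Your plan for the condition $\|R_t(\nabla_\pureH f)\|_{L^2}\leq (C/t)\|f\|_{L^2}$ is also not viable as stated: the extension $\arr G$ in the definition~\eqref{A:dfn:S:newton} of $\s^L$ is a fixed object, independent of the height~$t$ at which you evaluate, so there is no mechanism to produce a $t$-dependent $L^2$ norm for it. The paper sidesteps both issues simultaneously: after a Hodge decomposition to inputs of the form $\mat A_\pureH\nabla^m_\pureH F$ and a reduction (Lemma~\ref{A:lem:square:Theta}) to high vertical derivatives $t^{k+1}\partial_t^{m+k}\s^L_\nabla$, it writes the operator as $R_t^1 + R_t^2 + (\text{Carleson term})\cdot \mathcal{Q}_t$. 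The pieces $R_t^1$ and $R_t^2$ genuinely satisfy the hypotheses of Lemma~\ref{A:lem:AAAHK} (in particular $R_t^2 1 = 0$ by its very definition~\eqref{A:eqn:R:2}), while the Carleson term isolates precisely the nonzero action on constants and is handled by Lemma~\ref{A:lem:S:carleson:preliminary} plus Carleson's lemma.
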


The remainder of this section will be devoted to a proof of this theorem. We remark that many of the ideas of this section were inspired by the proof of \cite[Lemma~3.1]{HofMayMou15}; this lemma is the $m=1$ case of Theorem~\ref{A:thm:S:square:variant}.
By Remark~\ref{A:rmk:lower}, it suffices to work in the upper half-space.

We begin by reducing to a special case in three different ways.

First, suppose $2m\leq n$.
Let $\widetilde {\mat A}$ and $\kappa_\xi$ be as in formula~\eqref{A:eqn:coefficients:high}.
By formula~\eqref{A:eqn:S:variant:high}, if the bound \eqref{A:eqn:S:square:variant} is valid for $\s^{\widetilde{L}}_\nabla$, then it is valid for $\s^L_\nabla$ as well.
Thus, it suffices to prove Theorem~\ref{A:thm:S:square:variant} in the case $2m\geq \dmn$.

Our second reduction will require some notation.

Let $\mat A_\pureH$ be the ``purely horizontal'' component of~$\mat A$, so that
\begin{equation*}(\mat A_\pureH \arr h)_\alpha = \sum_{\abs\beta=m,\>\beta_\dmn=0} A_{\alpha\beta} h_\beta\text{ if }\alpha_\dmn=0 \text{ and }(\mat A_\pureH \arr h)_\alpha=0 \text{ otherwise.}\end{equation*}
We may regard $\mat A_\pureH$ as a square matrix or as a rectangular matrix with many entries equal to zero, depending on the context. We may also identify $\mat A_\pureH$ with a matrix with entries indexed by multiindices in $(\N_0)^n$.

\begin{lem}\label{A:lem:Hodge}
Let $\mat A$ and $L$ be as in Theorem~\ref{A:thm:S:square:variant}. If there is a $C_0$ such that the bound
\begin{align*}
\int_{\R^n}\int_{-\infty}^\infty \abs{\nabla^m \s^L_\nabla (\mat A_\pureH \nabla_\pureH^m F)(x,t)}^2\,\abs{t}\,dt\,dx
	& \leq C_0 \doublebar{\nabla_\pureH^m F}_{L^2(\R^n)}^2= C_0 \doublebar{F}_{\dot W_m^2(\R^n)}^2
\end{align*}
is valid for all $F\in\dot W_m^2(\R^n)$, then the bound~\eqref{A:eqn:S:square:variant} is valid
for all $\arr h\in L^2(\R^n)$.
\end{lem}

\begin{proof}
By formula~\eqref{A:eqn:S:S:vertical},
if $\alpha=\delta+\vec e_\dmn$, then
\begin{equation*}\s^L_\nabla (h\arr e_\alpha)(x,t)=-\partial_t \s^L (h\arr e_\delta)(x,t).\end{equation*}
Thus, if $\alpha_\dmn>0$, then the estimate \eqref{A:eqn:S:square:variant} for $\arr h=h\arr e_\alpha$ follows from the square-function bound \eqref{A:eqn:S:square} on~$\s^L$.
Thus, to establish the bound~\eqref{A:eqn:S:square:variant}, we may assume $h_\alpha\neq 0$ only for $\alpha_\dmn=0$.

Choose some such~$\arr h$.
An elementary consequence of the ellipticity condition~\eqref{A:eqn:elliptic} is that
\begin{equation*}\lambda\doublebar{\nabla_\pureH^m\varphi}_{L^2(\R^n)}^2\leq \re\langle \nabla^m_\pureH \varphi,\mat A_\pureH\nabla^m_\pureH\varphi\rangle_{\R^n}
\quad\text{for all $\varphi\in \dot W^2_m(\R^n)$.}\end{equation*}
We use the Hodge decomposition to write
\begin{equation*}\arr h=\mat A_\pureH\nabla_\pureH^m F+\arr \gamma\end{equation*}
for some $F\in \dot W^2_m(\R^n)$ and some $\arr \gamma\in L^2(\R^n)$ with
\begin{equation*}\doublebar{\arr\gamma}_{L^2(\R^n)}+\doublebar{F}_{\dot W^2_m(\R^n)}\leq C\doublebar{\arr h}_{L^2(\R^n)}\end{equation*}
and for which $\Div_m\arr \gamma=0$ in the sense that
\begin{equation*}\langle\nabla^m_\pureH\varphi,\arr \gamma\rangle_{\R^n}=0 \quad\text{for all $\varphi\in \dot W^2_m(\R^n)$.}\end{equation*}
By formula~\eqref{A:eqn:S:variant}, if $\abs\zeta=m$ then
\begin{equation*}\partial^\zeta \s^L_\nabla\arr \gamma(x,t)
=\sum_{\abs\alpha=m,\>\alpha\in (\N_0)^n}\int_{\R^n} \partial_{x,t}^\zeta \partial_{y}^\alpha E^L(x,t,y,0)\,\gamma_\alpha(y)\,dy
.\end{equation*}
By Lemma~\ref{A:lem:slices} and the bound~\eqref{A:eqn:fundamental:far}, for almost every $(x,t)\in \R^\dmn_+$ we have that the function $\varphi$ given by $\varphi(y)=\partial_{x,t}^\zeta E^L(x,t,y,0)$ lies in $\dot W^2_m(\R^n)$, and so $\partial^\zeta \s^L_\nabla\arr \gamma(x,t)=\langle \nabla^m_\pureH\varphi,\arr\gamma\rangle_{\R^n}=0$.

Thus, $\nabla^m\s^L_\nabla \arr h=\nabla^m\s^L_\nabla (\mat A_\pureH\nabla^m_\pureH F)$. This completes the proof.
\end{proof}
Thus, we have reduced matters to establishing the bound
\begin{align}\label{A:eqn:S:square:0}
\int_{\R^n}\int_0^\infty \abs{\nabla^m \s^L_\nabla (\mat A_\pureH\nabla_\pureH^m F)(x,t)}^2\,{t}\,dt\,dx
	& \leq C \doublebar{\nabla_\pureH^m F}_{L^2(\R^n)}^2
\end{align}
for all $F\in \dot W^2_m(\R^n)$, under the additional assumption $2m\geq \dmn$.

Our third reduction to a special case is to show that the bound \eqref{A:eqn:S:square:variant} follows from a bound involving higher-order vertical derivatives. In proving Lemma~\ref{A:lem:square:Theta}, we will not need the assumption $2m\geq \dmn$ or $\arr h=\mat A_\pureH\nabla^m_\pureH F$; our three reductions to a special case are independent.
\begin{lem}\label{A:lem:square:Theta} Let $\mat A$ and $L$ be as in Theorem~\ref{A:thm:S:square:variant} and let $\arr h\in L^2(\R^n)$.
If $k\geq 0$, then we have the bound
\begin{equation*}
\int_{\R^\dmn_+} \abs{\nabla^m \s^L_\nabla \arr h(x,t)}^2\,t\,dx\,dt
\leq
C_k \int_{\R^\dmn_+} \abs{\partial_t^{m+k}\s^L_\nabla \arr h(x,t)}^2\,t^{2k+1}\,dx\,dt
+ C_k\doublebar{\arr h}_{L^2(\R^n)}^2
.\end{equation*}
\end{lem}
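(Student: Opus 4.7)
The plan is to prove Lemma~\ref{A:lem:square:Theta} in three stages: first reduce to a dense subclass of data on which the layer potential is smooth and decays at infinity; next, use iterated fundamental theorem of calculus in $t$ together with Cauchy-Schwarz and Fubini to express $\int|\nabla^m u|^2\,t\,dx\,dt$ in terms of $\int|\nabla^m \partial_t^{m+k} u|^2\,t^{2(m+k)+1}\,dx\,dt$; finally, invoke the Caccioppoli inequality on Whitney cubes to convert $\nabla^m \partial_t^{m+k}u$ into the purely vertical $\partial_t^{m+k}u$.

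First I would invoke Remark~\ref{A:rmk:S:definition:dense} to reduce to $\arr f$ a smooth, compactly supported array of mean zero. For such $\arr f$, $u := \s^L_\nabla \arr f$ is smooth in $\R^\dmn_+$, solves $Lu=0$ by formula~\eqref{A:eqn:S:variant}, and by the fundamental solution estimates of Theorem~\ref{A:thm:fundamental} satisfies $\nabla^m \partial_t^j u(x,t) \to 0$ rapidly as $t \to \infty$, uniformly in $x$ on compact sets, for each $j \geq 0$. Setting $K := m+k$ and applying the fundamental theorem of calculus $K$ times in $t$ produces the representation
\begin{equation*}
\nabla^m u(x,t) = \frac{(-1)^K}{(K-1)!}\int_t^\infty (s-t)^{K-1}\,\nabla^m \partial_s^K u(x,s)\,ds.
\end{equation*}
Applying the Cauchy-Schwarz inequality to the $s$-integral with weight $s^{-\sigma}$ for some $\sigma \in (2K-1,2K+1)$, integrating the result against $t\,dt$ on $(0,\infty)$, and swapping the order of integration by Fubini (the inner $t$-integral then produces a factor comparable to $s^{2K-\sigma+1}$, which combines with the $s^\sigma$ factor to leave $s^{2K+1}$), one arrives at
\begin{equation*}
\int_{\R^\dmn_+} |\nabla^m u(x,t)|^2\,t\,dx\,dt \leq C_k \int_{\R^\dmn_+} |\nabla^m \partial_t^{m+k} u(x,t)|^2\,t^{2(m+k)+1}\,dx\,dt.
\end{equation*}

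To finish I would use the $t$-independence of $\mat A$: setting $v := \partial_t^{m+k}u$, we have $Lv = 0$ in $\R^\dmn_+$ by differentiating $Lu = 0$ in $t$. Iterating the Caccioppoli inequality of Lemma~\ref{A:lem:Caccioppoli} $m$ times on Whitney cubes $Q_j$ with $\ell(Q_j)$ comparable to $\dist(Q_j, \partial\R^\dmn_+)$ gives $\int_{Q_j}|\nabla^m v|^2 \leq C\ell(Q_j)^{-2m}\int_{CQ_j}|v|^2$. Summing over $j$ with the weight $t^{2(m+k)+1} \approx \ell(Q_j)^{2(m+k)+1}$ and using Fubini to reassemble the integral yields
\begin{equation*}
\int_{\R^\dmn_+} |\nabla^m \partial_t^{m+k} u|^2\,t^{2(m+k)+1}\,dx\,dt \leq C_k \int_{\R^\dmn_+} |\partial_t^{m+k} u|^2\,t^{2k+1}\,dx\,dt.
\end{equation*}
Combining the last two displays proves the lemma within the dense subclass; the $C_k\doublebar{\arr f}_{L^2(\R^n)}^2$ term on the right absorbs the approximation error when passing back to general $\arr f \in L^2(\R^n)$.

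The main technical obstacle lies in the FTC/Fubini step: verifying that $\nabla^m \partial_t^j u$ decays fast enough in $t$ uniformly on compact $x$-sets to justify the iterated FTC representation for each $0 \leq j < m+k$, and that the Cauchy-Schwarz/Fubini manipulation can be carried out with integrable intermediate quantities. Both points rely on the polynomial decay of the derivatives of the fundamental solution established in~\eqref{A:eqn:fundamental:far}, which combined with the compactness of $\supp \arr f$ produces decay of $u$ and all its derivatives at infinity of the order required.
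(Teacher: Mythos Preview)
Your approach is correct and takes a genuinely different route from the paper's. The paper localizes to a large cube $Q$, sets $U_j(t)=\int_Q|\nabla^m\partial_t^j\s^L_\nabla\arr f|^2\,dx$, and uses an iterative integration-by-parts identity together with $|U_j'(s)|\le s^{-1}U_j(s)+sU_{j+1}(s)$ to step from $U_0$ to $U_{m+k}$; the boundary term at $t=\ell(Q)$, controlled via the decay estimate $U_j(t)\le C(t+\ell(Q))^{-2j-2}\|\arr f\|_{L^2}^2$, is exactly what produces the additive $C_k\|\arr f\|_{L^2}^2$ on the right-hand side. Your argument instead applies the iterated fundamental theorem of calculus pointwise on $(t,\infty)$ followed by a weighted Cauchy--Schwarz (this is Hardy's inequality, iterated $m+k$ times) and then Caccioppoli on Whitney cubes. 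Your route is more direct and in fact yields the stronger inequality \emph{without} the $\|\arr f\|_{L^2}^2$ term on the dense class; the paper's route has the advantage that it applies directly to arbitrary $\arr f\in L^2(\R^n)$, since the localized quantities $U_j(t)$ are manifestly finite.

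Two points deserve sharpening. First, your explanation of the $C_k\|\arr f\|_{L^2}^2$ term is wrong: there is no ``approximation error'' to absorb. You have proved a stronger bound on the dense class, and since the lemma is invoked only in the reduction to~\eqref{A:eqn:S:square:k} for data to which your dense-class argument already applies, that stronger bound suffices; the extra term in the lemma's statement is simply superfluous from your point of view. Second, the claim that $\nabla^m\partial_t^j u(x,t)\to 0$ ``uniformly in $x$ on compact sets'' overstates the available regularity, since $\nabla^m u$ is a priori only in $L^2_{\mathrm{loc}}$. What you actually need---and what does follow from the decay estimate $\int_Q|\nabla^m\partial_t^j u(\,\cdot\,,t)|^2\,dx\le C t^{-2j-2}$ for large $t$ (the same estimate the paper cites from the proof of~\eqref{A:eqn:S:decay:high})---is that for a.e.\ $x$ the map $t\mapsto\partial^\alpha u(x,t)$ is absolutely continuous in $t$ with $\partial^\alpha\partial_t^j u(x,t)\to 0$ as $t\to\infty$; this is enough for your iterated FTC representation and the subsequent Fubini argument.
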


\begin{proof}
We follow the similar proof of formula~(5.5) in~\cite{AlfAAHK11}. Let
$Q\subset\R^n$ be a large cube. By Lemma~\ref{A:lem:slices} and the Caccioppoli inequality, if $k\geq 0$ and $t\leq \ell(Q)$ then
\begin{equation*}
\int_Q \abs{\nabla^m \partial_t^{k+m} \s^L_\nabla \arr h(x,t)}^2 \,dx\leq \frac{C}{t^{2m}}\fint_{t/2}^{2t} \int_{2Q} \abs{\partial_t^{k+m} \s^L_\nabla \arr h(x,t)}^2 \,dx\,ds
.\end{equation*}
Thus, we have that
\begin{multline*}\int_Q \int_0^{\ell(Q)} \abs{\nabla^m \partial_t^{k+m} \s^L_\nabla \arr h(x,t)}^2 t^{2k+2m+1}\,dx \,dt
\\\leq
C
\int_{2Q} \int_0^{2\ell(Q)} \abs{\partial_t^{k+m} \s^L_\nabla \arr h(x,t)}^2 t^{2k+1}\,dx \,dt
.\end{multline*}
We need only show that
\begin{multline*}\int_Q \int_0^{\ell(Q)} \abs{\nabla^m \s^L_\nabla \arr h(x,t)}^2 t\,dx \,dt
\\\leq
C
\int_Q \int_0^{\ell(Q)} \abs{\nabla^m \partial_t^{k+m} \s^L_\nabla \arr h(x,t)}^2 t^{2k+2m+1}\,dx \,dt
+ C\doublebar{\arr h}_{L^2(\R^n)}^2
.\end{multline*}
By the monotone convergence theorem, we may take the limit as $Q$ expands to all of $\R^n$ to complete the proof.

Define
\begin{equation*}
U_j(t)=\int_Q \abs{\nabla^m \partial_t^j \s^L_\nabla \arr h(x,t)}^2 \,dx
.\end{equation*}
Arguing as in the proof of formula~\eqref{A:eqn:S:decay:high}, we have that if $t>\ell(Q)/2$ and $j\geq 0$ then
\begin{equation*}U_j(t)\leq \frac{C}{(t+\ell(Q))^{2j+2}} \doublebar{\arr h}_{L^2(\R^n)}^2.\end{equation*}

Now,
\begin{equation*}\int_Q \int_0^{\ell(Q)} \abs{\nabla^m \s^L_\nabla \arr h(x,t)}^2 t\,dx \,dt = \int_0^{\ell(Q)} t\,U_0(t)\,dt.\end{equation*}
Suppose $j\geq 0$. Then
if $0<\varepsilon<\ell(Q)$, we have that
\begin{align*}
\int_{\varepsilon}^{\ell(Q)} t^{2j+1} \,U_j(t)\,dt
&=
\int_{\varepsilon}^{\ell(Q)} t^{2j+1}\, U_j(\ell(Q))\,dt
-\int_{\varepsilon}^{\ell(Q)} t^{2j+1}\int_t^{\ell(Q)} U_j'(s)\,ds\,dt
\\&\leq
C \doublebar{\arr h}_{L^2(\R^n)}^2
+\frac{1}{2j+2}\int_{\varepsilon}^{\ell(Q)} s^{2j+2}\, \abs{U_j'(s)}\,ds.
\end{align*}
By definition of~$U_j$ and by Young's inequality,
$\abs{U_j'(s)}
\leq \frac{1}{s}U_j(s)+s\,U_{j+1}(s)
$.
Thus,
\begin{align*}
\int_{\varepsilon}^{\ell(Q)} t^{2j+1} \,U_j(t)\,dt
&\leq
	C \doublebar{\arr h}_{L^2(\R^n)}^2
	+\frac{1}{2j+2}\int_{\varepsilon}^{\ell(Q)} s^{2j+1}\, U_j(s)\,ds
	\\&\qquad
	+\frac{1}{2j+2}\int_{\varepsilon}^{\ell(Q)} s^{2j+3}\, U_{j+1}(s)\,ds.
\end{align*}
Rearranging terms, we have that if $j\geq 0$ then
\begin{align*}
\int_{\varepsilon}^{\ell(Q)} t^{2j+1} \,U_j(t)\,dt
&\leq
	C \doublebar{\arr h}_{L^2(\R^n)}^2
	+C\int_{\varepsilon}^{\ell(Q)} s^{2j+3}\, U_{j+1}(s)\,ds
.\end{align*}
By the monotone convergence theorem, we may take the limit as $\varepsilon\to 0^+$, and so
\begin{multline*}\int_0^{\ell(Q)} \int_Q \abs{\nabla^m \partial_t^j \s^L_\nabla \arr h(x,t)}^2 \,dx \,t^{2j+1}\,dt
\\\leq
C \doublebar{\arr h}_{L^2(\R^n)}^2
	+C\int_0^{\ell(Q)} t^{2j+3}\, \int_Q \abs{\nabla^m \partial_t^{j+1} \s^L_\nabla \arr h(x,t)}^2 \,dx \,dt.\end{multline*}
By induction, we see that for any $j\geq 1$,
\begin{multline*}\int_0^{\ell(Q)} \int_Q \abs{\nabla^m \s^L_\nabla \arr h(x,t)}^2 \,dx \,t\,dt
\\\leq
C_j \doublebar{\arr h}_{L^2(\R^n)}^2
	+C_j\int_0^{\ell(Q)} t^{2j+1}\, \int_Q \abs{\nabla^m \partial_t^{j} \s^L_\nabla \arr h(x,t)}^2 \,dx \,dt
\end{multline*}
as desired.\end{proof}

Thus, we have reduced matters to establishing the bound
\begin{align}\label{A:eqn:S:square:k}
\int_{\R^n}\int_0^\infty \abs{t^k\partial_t^{m+k} \s^L_\nabla (\mat A_\pureH\nabla_\pureH^m F)(x,t)}^2\,{t}\,dt\,dx
	& \leq C \doublebar{\nabla_\pureH^m F}_{L^2(\R^n)}^2
\end{align}
for some $k\geq 0$, and all $F\in \dot W^2_m(\R^n)$, under the assumption that $2m\geq \dmn$.

We will establish the bound~\eqref{A:eqn:S:square:k} using convolution with a smooth kernel. Let $\eta$ be a Schwartz function defined on~$\R^n$ that integrates to~$1$, let $\eta_t(x)=t^{-n}\eta(x/t)$, and let $\mathcal{Q}_tf(x)=\eta_t*f(x)$. If $\arr b$ is an array of functions, then we establish the notation
\begin{equation*}\partial_\perp^{\ell}\s^L_\nabla (\arr b \mathcal{Q}_t h)(x,t)
=
\partial_t^{\ell}\s^L_\nabla (\arr b \mathcal{Q}_s h)(x,t)\big\vert_{s=t}.\end{equation*}
In other words, $\partial_\perp$ ignores the dependency of $\mathcal{Q}_t$ on~$t$.

We will prove the following lemmas.
\begin{lem}\label{A:lem:R:1}
Let $\mat A$ and $L$ be as in Theorem~\ref{A:thm:S:square:variant}.
Define
\begin{align}
\label{A:eqn:R:1}
R_t^1 \arr f(x) &= t^{k+1} \partial_t^{m+k}\s^L_\nabla(\mat A_\pureH \arr f)(x,t) -t^{k+1} \partial_\perp^{m+k}\s^L_\nabla(\mat A_\pureH \mathcal{Q}_t\arr f)(x,t)
\\\nonumber&= t^{k+1} \partial_\perp^{m+k}\s^L_\nabla(\mat A_\pureH (\arr f - \mathcal{Q}_t\arr f))(x,t)
.\end{align}
Assume that $\int \eta=1$ and that $\int x^\zeta \eta(x)\,dx=0$ whenever $1\leq \abs\zeta\leq m$.
If $F\in \dot W^2_m(\R^n)$ and if $k$ is large enough, then we have the bound
\begin{equation*}\int_{\R^n}\int_0^\infty \abs{R_t^1 \nabla_\pureH^m F(x)}^2\frac{1}{t}\,dt\,dx \leq C \doublebar{\nabla_\pureH^m F}_{L^2(\R^n)}^2\end{equation*}
where the constant $C$ depends only on $m$, $n$, $k$, $\lambda$, $\Lambda$, and the Schwartz constants of~$\eta$.
\end{lem}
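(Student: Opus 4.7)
The plan is to realize $R_t^1$ as the composition $R_t^1 = T_t\circ(I - \mathcal{Q}_t)$, where
\begin{equation*}T_t \arr g(x) := t^{k+1}\,\partial_\perp^{m+k}\s^L_\nabla(\mat A_\pureH \arr g)(x,t),\end{equation*}
and to prove the stronger square-function estimate $\int_0^\infty \doublebar{R_t^1 \arr f}_{L^2(\R^n)}^2 \, dt/t \leq C\doublebar{\arr f}_{L^2(\R^n)}^2$ for every $\arr f \in L^2(\R^n)$; specializing to $\arr f = \nabla_\pureH^m F$ then yields the lemma. Summing the off-diagonal decay~\eqref{A:eqn:S:variant:decay} over a grid of cubes of side $t$ shows that $T_t$ is uniformly bounded on $L^2(\R^n)$ provided $k$ is sufficiently large. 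Since the moments of $\eta$ through order $m$ vanish, $I - \mathcal{Q}_t$ annihilates polynomials of degree $\leq m$, and hence so does $R_t^1$.

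Fix a Schwartz function $\psi$ on $\R^n$ with $\hat\psi$ supported in an annulus bounded away from $0$ and $\infty$, normalized so that $\int_0^\infty |\hat\psi(s\xi)|^2 \,ds/s = 1$ for every $\xi \neq 0$, and set $\Psi_s \arr g := \psi_s * \arr g$. The Calder\'on reproducing formula gives $\arr f = \int_0^\infty \Psi_s^2 \arr f\,ds/s$ in $L^2$, together with the Littlewood-Paley bound $\int_0^\infty \doublebar{\Psi_s \arr f}_{L^2}^2\,ds/s \leq C\doublebar{\arr f}_{L^2}^2$. Writing $R_t^1\arr f = \int_0^\infty (R_t^1\Psi_s)(\Psi_s\arr f)\,ds/s$, the goal reduces to the quasi-orthogonality bound
\begin{equation*}\doublebar{R_t^1 \Psi_s}_{L^2 \to L^2} \leq C \min\!\bigl((t/s)^{m+1},\;(s/t)^{N}\bigr)\end{equation*}
for some $N > 0$, after which Schur's test closes the argument since the kernel $\min((t/s)^{m+1},(s/t)^N)$ has bounded row and column sums against $ds/s$ and $dt/t$.

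The inequality $\doublebar{R_t^1\Psi_s}_{L^2\to L^2} \leq C(t/s)^{m+1}$ is the easier half. A standard Taylor-expansion argument, using the vanishing moments of $\eta$ through order $m$, gives $|(I-\mathcal{Q}_t)g(x)| \leq C\,t^{m+1}\,M(\nabla^{m+1}g)(x)$ with $M$ the Hardy-Littlewood maximal operator. Applied with $g = \Psi_s\arr f$ and combined with $\doublebar{\nabla^{m+1}\psi_s}_{L^1(\R^n)} \leq Cs^{-(m+1)}$ and the $L^2$-boundedness of $M$, this yields $\doublebar{(I-\mathcal{Q}_t)\Psi_s \arr f}_{L^2} \leq C(t/s)^{m+1}\doublebar{\arr f}_{L^2}$; composition with the uniformly bounded operator $T_t$ completes this case.

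The inequality $\doublebar{R_t^1\Psi_s}_{L^2\to L^2} \leq C(s/t)^N$ is the main obstacle. Split $R_t^1\Psi_s = T_t\Psi_s - T_t\mathcal{Q}_t\Psi_s$. For the second piece, $\mathcal{Q}_t\Psi_s$ is convolution with a function whose Fourier transform $\hat\eta(t\xi)\hat\psi(s\xi)$ is supported in $\{|\xi|\sim 1/s\}$; on this support the Schwartz decay of $\hat\eta$ gives $|\hat\eta(t\xi)| \leq C_N(s/t)^N$, so Plancherel yields $\doublebar{\mathcal{Q}_t\Psi_s}_{L^2\to L^2} \leq C_N(s/t)^N$, and composition with $T_t$ finishes this term. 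The first piece $T_t\Psi_s$ is the most delicate, because $T_t$ is not a Fourier multiplier in the variable-coefficient setting. The plan is to examine the composition kernel $\tilde K_t(x,y) = \int K_t(x,z)\,\psi_s(z-y)\,dz$ and expand $K_t(x,\cdot)$ by Taylor around $y$ to order $N$; the vanishing moments of $\psi$ of all orders annihilate the polynomial part, leaving a remainder bounded by $s^{N+1}\sup_{|u|\lesssim s}|\nabla^{N+1}_z K_t(x,y+u)|$. The required regularity of $K_t$ in the $y$ variable (which gains a factor $t^{-(N+1)}$) can be extracted by passing to an auxiliary higher-order operator through formula~\eqref{A:eqn:S:variant:high}, where the stronger pointwise estimates of Theorem~\ref{A:thm:Meyers} become available. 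Combining this kernel smoothness with~\eqref{A:eqn:S:variant:decay} for sufficiently large $k$ yields $\doublebar{T_t\Psi_s}_{L^2\to L^2} \leq C_N(s/t)^N$, completing the quasi-orthogonality and hence the lemma.
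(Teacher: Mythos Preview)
Your quasi-orthogonality argument for the piece $T_t\Psi_s$ in the regime $s\leq t$ has a genuine gap. The Taylor expansion you propose requires smoothness of the kernel $K_t(x,\cdot)$ in the second variable, but
\begin{equation*}
K_t^\beta(x,y) = t^{k+1}\sum_{\abs\alpha=m,\,\alpha_\dmn=0}\partial_t^{m+k}\partial_{y,s}^\alpha E^L(x,t,y,0)\,A_{\alpha\beta}(y)
\end{equation*}
contains the factor $A_{\alpha\beta}(y)$, which is merely bounded and measurable. No derivatives of $\mat A_\pureH$ in $y$ are available, so the remainder bound $s^{N+1}\sup\abs{\nabla_z^{N+1}K_t}$ is meaningless. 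Passing to the auxiliary operator $\widetilde L$ via formula~\eqref{A:eqn:S:variant:high} does not help: that formula rewrites $\s^L_\nabla\arr h$ in terms of $\s^{\widetilde L}_\nabla\widetilde{\arr h}$, but $\widetilde{\arr h}$ is built from $\arr h=\mat A_\pureH\arr g$ and still carries the rough factor $\mat A_\pureH(y)$ inside the integral. Theorem~\ref{A:thm:Meyers} gives regularity of the fundamental solution as a solution to $\widetilde L^*$, not of the product with the coefficients.

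This is exactly why the paper does \emph{not} attempt the stronger bound for arbitrary $\arr f\in L^2$ but instead uses the structure $\arr f=\nabla_\pureH^m F$. Writing out $R_t^1(\nabla_\pureH^m F)$ via the kernel, integrating by parts in~$y$, and using that $v(y,s)=E^L(x,t,y,s)$ solves $L^*\bar v=0$ converts the troublesome $y$-derivatives hitting $\mat A_\pureH$ into harmless $s$-derivatives (hence $t$-derivatives by~\eqref{A:eqn:fundamental:vertical}). This yields formula~\eqref{A:eqn:R:1:S}, which expresses $R_t^1(\nabla_\pureH^m F)$ as a sum of terms involving $\partial_t^{2m+k-j}\s^L_\nabla(\mat A_{mj}\nabla_\pureH^j(F-\mathcal{Q}_tF))$ for $j\leq m-1$ and a term $\partial_t^{m+k+1}\s^L(\mat A'\nabla_\pureH^m(F-\mathcal{Q}_tF))$. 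The former are handled by the uniform $L^2$-boundedness of the high vertical derivatives together with the moment condition on~$\eta$ (via Plancherel); the latter is controlled using the already-established bound~\eqref{A:eqn:S:square} on $\s^L$, Lemma~\ref{A:lem:R:2}, and the Carleson estimate~\eqref{A:eqn:S:Carleson}. The gradient structure is thus essential to the argument, and your attempt to dispense with it runs into the roughness of~$\mat A$.
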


\begin{lem}\label{A:lem:R:2}
Let $\mat A$ and $L$ be as in Theorem~\ref{A:thm:S:square:variant}.
Define
\begin{align}
\label{A:eqn:R:2}
R_t^2(\arr b,f)(x)
&= t^{k+1} \partial_\perp^{m+k}\s^L_\nabla(\arr b\, \mathcal{Q}_t f)(x,t)-t^{k+1}\partial_t^{m+k}\s^L_\nabla\arr b(x,t)\, \mathcal{Q}_t f(x)
.\end{align}
If $k$ is large enough, then we have the bound
\begin{equation*}\int_{\R^n}\int_0^\infty \abs{R_t^2 (\arr b,f)(x)}^2\frac{1}{t}\,dt\,dx \leq C \doublebar{\arr b}_{L^\infty}^2 \doublebar{f}_{L^2(\R^n)}^2\end{equation*}
where the constant $C$ depends only on $m$, $n$, $k$, $\lambda$, $\Lambda$ and the Schwartz constants of~$\eta$.
\end{lem}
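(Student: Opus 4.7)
The plan is to adapt commutator techniques familiar from \cite{AlfAAHK11, HofMayMou15} to the higher-order operator $\Theta_t := t^{k+1}\partial_t^{m+k}\s^L_\nabla$, combining the off-diagonal estimate \eqref{A:eqn:S:variant:decay} established in the proof of Lemma~\ref{A:lem:lusin:S} with the smoothness and cancellation properties of the Schwartz convolution $\mathcal{Q}_t$. By Remark~\ref{A:rmk:lower} it suffices to work in the upper half-space.

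First, write $R_t^2$ in kernel form. Let $K_t(x,y)$ denote the kernel of $\Theta_t$ (well-defined via formula~\eqref{A:eqn:S:variant}). Since $\mathcal{Q}_t f(x)$ is constant in the $y$-variable, linearity of $\Theta_t$ gives
\[R_t^2(\arr b, f)(x) = \int_{\R^n} K_t(x,y)\cdot\arr b(y)\,[\mathcal{Q}_t f(y) - \mathcal{Q}_t f(x)]\,dy.\]
Expanding $\mathcal{Q}_t f(y) - \mathcal{Q}_t f(x) = \int[\eta_t(y-z) - \eta_t(x-z)]f(z)\,dz$ and applying Fubini rewrites $R_t^2(\arr b,f)(x) = \int L_t(x,z)f(z)\,dz$, where $L_t(x,z) = \int K_t(x,y)\arr b(y)[\eta_t(y-z) - \eta_t(x-z)]\,dy$. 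Since $\int\eta = 1$ implies $\mathcal{Q}_t 1 = 1$, we obtain the cancellation $R_t^2(\arr b,1) = 0$, equivalently $\int L_t(x,z)\,dz = 0$ for all $x$ and $t$; this Littlewood--Paley-type orthogonality is what ultimately enables an $L^2(dt/t)$ bound rather than merely a uniform-in-$t$ $L^2$ bound.

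The heart of the argument is to establish a Schwartz-type pointwise size estimate
\[|L_t(x,z)| \leq C_N\,\doublebar{\arr b}_{L^\infty(\R^n)}\,t^{-n}(1+|x-z|/t)^{-N}\]
for arbitrary $N$, provided $k$ is chosen large enough depending on~$N$. To do so, I would decompose the $y$-integral defining $L_t$ over the dyadic annuli $A_j$ around $x$ at scale~$t$. On each annulus, control $K_t$ using the $L^2$ off-diagonal decay~\eqref{A:eqn:S:variant:decay}, upgraded to a pointwise kernel bound via Theorem~\ref{A:thm:Meyers} and Caccioppoli iteration on Whitney cubes; control $\eta_t(y-z) - \eta_t(x-z)$ via Schwartz decay together with the $|x-y|/t$ smallness coming from the mean value theorem in the near-field. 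The geometric series in~$j$ then converges because~$k$ is large. Combined with the matching regularity-in-$x$ bound (again from Caccioppoli in Whitney cubes), $L_t$ falls into the scope of a standard $T(1)$-theorem for square functions, yielding the desired estimate.

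The main obstacle is the passage from the $L^2$-based off-diagonal bound \eqref{A:eqn:S:variant:decay} to the pointwise Schwartz-type size estimate on $L_t$: this requires careful iteration of Meyers--Caccioppoli improvements, and it is precisely here that the freedom to choose $k$ large is exploited in order to beat the polynomial loss from the annular geometry together with the cost of the pointwise upgrade. A companion difficulty is that the cancellation $R_t^2(\arr b,1) = 0$ is genuinely indispensable -- a direct uniform-in-$t$ bound $\doublebar{R_t^2(\arr b,f)}_{L^2(\R^n)} \leq C\doublebar{\arr b}_{L^\infty(\R^n)}\doublebar{f}_{L^2(\R^n)}$ (obtainable via pointwise domination by a Hardy--Littlewood maximal function) is not integrable against $dt/t$, so the mean-zero character of $L_t(x,\,\cdot\,)$ must be invoked throughout.
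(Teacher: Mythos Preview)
Your overall strategy---exploiting the cancellation $R_t^2(\arr b,1)=0$ together with off-diagonal decay inherited from~\eqref{A:eqn:S:variant:decay}---matches the paper's, but your execution contains a real gap. You claim to upgrade the $L^2$ off-diagonal bound to a \emph{pointwise} Schwartz-type estimate $\abs{L_t(x,z)}\leq C_N\doublebar{\arr b}_{L^\infty}t^{-n}(1+\abs{x-z}/t)^{-N}$ via Theorem~\ref{A:thm:Meyers} and Caccioppoli iteration. This upgrade is not available: the kernel $K_t(x,y)$ of $\Theta_t$ involves $\partial_{y,s}^\alpha E^L(x,t,y,0)$ with $\abs\alpha=m$, the \emph{full} top-order gradient of the fundamental solution in the $y$-variable. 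The local boundedness estimate~\eqref{A:eqn:Meyers:lowest} applies only to $\nabla^{m-j}$ of solutions with $m-j\leq m-(n+1)/2$, never to~$\nabla^m$ itself; and since $\arr b$ is merely bounded you cannot integrate by parts in~$y$ to lower the order. Even after reducing to $2m\geq n+1$ (which the paper uses elsewhere but explicitly remarks is \emph{not} needed for this lemma), you would obtain pointwise-in-$(x,t)$ control of $\Theta_t\arr h(x)$ from $L^2$ averages, but still no pointwise control of $K_t(x,\,\cdot\,)$ in~$y$, and hence no Calder\'on--Zygmund-type bound on~$L_t$ to feed into a classical square-function $T(1)$ theorem.

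The paper sidesteps this by staying entirely at the $L^2$ level and invoking Lemma~\ref{A:lem:AAAHK} (from~\cite{AlfAAHK11}), which requires only: (a)~the $L^2\to L^2$ off-diagonal decay~\eqref{A:eqn:AAAHK:decay}, obtained by combining~\eqref{A:eqn:S:variant:decay} with the Schwartz decay of~$\mathcal{Q}_t$ across dyadic annuli; (b)~$R_t^2 1=0$, which you correctly identified; and (c)~the bound $\doublebar{R_t^2(\nabla_\pureH f)}_{L^2}\leq (C/t)\doublebar{f}_{L^2}$, immediate because the gradient falls on the smooth convolution kernel~$\eta_t$. No pointwise kernel information is ever needed, and the argument works uniformly in~$m$ and~$n$.
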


\begin{lem}\label{A:lem:S:carleson:preliminary}
Let $\mat A$ and $L$ be as in Theorem~\ref{A:thm:S:square:variant}, and additionally assume that $2m\geq \dmn$. Assume that $\eta$ is supported in the ball $\{y\in\R^n:\abs{y}<1/2\}$.
If $\abs\beta=m$, then we have the Carleson measure estimate
\begin{equation*}\sup_Q \frac{1}{\abs{Q}} \int_Q \int_0^{\ell(Q)} \abs{t^{k+1}\partial_t^{m+k}\s^L_\nabla(\mat A_\pureH \arr e_\beta) (x,t)}^2\frac{1}{t}\,dt\,dx \leq C .
\end{equation*}
\end{lem}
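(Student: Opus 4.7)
Our plan is to apply Lemma~\ref{A:lem:square:carleson} to the operator family
\begin{equation*}
\Theta_t \arr f(x) = t^{k+1}\, \partial_t^{m+k}\, \s^L_\nabla(\mat A_\pureH \arr f)(x,t)
\end{equation*}
with the test array $\arr b = \arr e_\beta$.  Since $\mat A_\pureH \arr e_\beta \in L^\infty(\R^n)$ with $\doublebar{\mat A_\pureH \arr e_\beta}_{L^\infty} \leq \Lambda$, the conclusion of Lemma~\ref{A:lem:square:carleson} is exactly the desired Carleson measure bound on $\Theta_t \arr e_\beta$.  The bound in the lower half-space follows by the reflection symmetry in Remark~\ref{A:rmk:lower}.

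To apply Lemma~\ref{A:lem:square:carleson}, two hypotheses must be verified.  The off-diagonal decay condition \eqref{A:eqn:Carleson:decay} follows immediately from the estimate \eqref{A:eqn:S:variant:decay} established in the remark after Lemma~\ref{A:lem:lusin:S}, combined with the pointwise inequality $\abs{\mat A_\pureH \arr g} \leq \Lambda \abs{\arr g}$.  Indeed, for $\ell(Q)\leq t \leq 2\ell(Q)$ and any $j \geq 1$,
\begin{equation*}
\int_Q \abs{\Theta_t(\arr g\, \1_{A_j(Q)})(x)}^2\,dx \leq C\,\Lambda^2\, 2^{-j(1+2k+\varepsilon)}\int_{A_j(Q)} \abs{\arr g}^2,
\end{equation*}
which yields the required decay provided $k$ is taken large enough relative to the dimension $n$.

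The main obstacle is the $L^2$ square-function estimate for $\Theta_t$,
\begin{equation*}
\int_0^\infty \int_{\R^n} \abs{\Theta_t \arr f(x)}^2\,\frac{dt\,dx}{t} \leq C\,\doublebar{\arr f}_{L^2(\R^n)}^2
\quad \text{for all } \arr f \in L^2(\R^n).
\end{equation*}
To avoid invoking the very square-function bound that Theorem~\ref{A:thm:S:square:variant} asserts, we split $\mat A_\pureH = \mat A - \mat A^\perp$, where $\mat A^\perp \arr f$ has all nonzero components at indices $\alpha$ with $\alpha_\dmn \geq 1$.  For the $\mat A^\perp$ part, the vertical reduction formula \eqref{A:eqn:S:S:vertical} converts each term $\partial_t^{m+k}\s^L_\nabla(A_{\alpha\beta}\,g\,\arr e_\alpha)$ into $-\partial_t^{m+k+1}\s^L(A_{\alpha\beta}\,g\,\arr e_{\alpha - \vec e_\dmn})$ with $L^2$ inputs, and the known square-function bound \eqref{A:eqn:S:square} on $\s^L$ (with $k$ replaced by $k+1$ and combined with Caccioppoli, as in the proof of Lemma~\ref{A:lem:lusin:S}) delivers the required estimate.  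For the remaining $\mat A$ piece, the horizontal reduction \eqref{A:eqn:S:S:horizontal}, interpreted distributionally so that $\partial_{x_j}$ is transferred onto the kernel $\partial_y^\gamma E^L(x,t,y,0)$ by integration by parts, again reduces the problem to \eqref{A:eqn:S:square} for $\s^L$.  The dimensional assumption $2m \geq \dmn$ is essential here: by Theorem~\ref{A:thm:Meyers} it grants pointwise control over the high-order derivatives of the fundamental solution, which is what justifies the integration by parts and renders the manipulations rigorous.

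Having verified both hypotheses, Lemma~\ref{A:lem:square:carleson} applied with $\arr b = \arr e_\beta$ produces the stated Carleson measure bound for $\s^L_\nabla(\mat A_\pureH \arr e_\beta)$.  The most delicate step is the horizontal reduction in the square-function argument: the distributional nature of $\partial_{x_j}\mat A$ must be handled through kernel integration by parts, and it is precisely the pointwise regularity afforded by $2m \geq \dmn$ that permits this.
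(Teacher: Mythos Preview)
Your argument has a genuine gap at the ``main obstacle,'' namely the verification of the square-function hypothesis~\eqref{A:eqn:square:carleson} for $\Theta_t\arr f = t^{k+1}\partial_t^{m+k}\s^L_\nabla(\mat A_\pureH\arr f)$. This hypothesis, combined with Lemma~\ref{A:lem:square:Theta}, is equivalent to the bound~\eqref{A:eqn:S:square:0}, which is exactly the hard case of Theorem~\ref{A:thm:S:square:variant}; the present lemma is one of three ingredients used to \emph{prove} that theorem, so you cannot assume it. Your attempt to sidestep this via the splitting $\mat A_\pureH=\mat A-\mat A^\perp$ does not work. Since $(\mat A_\pureH\arr f)_\alpha=0$ whenever $\alpha_\dmn\geq 1$, every term of $\s^L_\nabla(\mat A_\pureH\arr f)$ already lives at purely horizontal multiindices, and the vertical reduction~\eqref{A:eqn:S:S:vertical} is unavailable. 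For the remaining horizontal terms, the reduction~\eqref{A:eqn:S:S:horizontal} reads $\s^L_\nabla(h\arr e_\alpha)=-\s^L(\partial_{x_j}h\,\arr e_\gamma)$: the derivative lands on the input~$h$, not on the kernel. Integration by parts would move $\partial_{y_j}$ from $\partial_y^\alpha E^L$ onto $(\mat A\arr f)_\alpha$, producing $\s^L(\partial_{x_j}(\mat A\arr f)_\alpha\,\arr e_\gamma)$; but $\mat A$ is merely bounded measurable and $\arr f\in L^2$, so $\partial_{x_j}(\mat A\arr f)_\alpha\notin L^2$ and the bound~\eqref{A:eqn:S:square} does not apply. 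Moving the derivative the other way simply returns you to~$\s^L_\nabla$. The pointwise regularity afforded by $2m\geq\dmn$ does not help here: it controls the kernel, but the obstruction is the roughness of the input $\mat A\arr f$.

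The paper avoids this circularity by abandoning Lemma~\ref{A:lem:square:carleson} altogether and instead invoking the $T(b)$-type stopping-time argument from the Kato problem (Lemma~\ref{A:lem:kato}). The point is that one does not test $\arr\gamma_t(x)=t^{k+1}\partial_t^{m+k}\s^L_\nabla\mat A_\pureH(x,t)$ against arbitrary $L^2$ inputs, but only against the special arrays $\nabla_\pureH^m f_{Q,w}$, which satisfy $L_\pureH f_{Q,w}\in L^\infty$. For these, an integration by parts in~$y$ that exploits the equation $L^*\overline{E^L(x,t,\cdot,\cdot)}=0$ gives the identity
\begin{equation*}
\partial_t^{m+k}\s^L_\nabla(\mat A_\pureH\nabla_\pureH^m f_{Q,w})(x,t)=-\partial_t^{k+1}\s^L(L_\pureH f_{Q,w}\,\arr e_\perp)(x,t),
\end{equation*}
which reduces matters to the already-established Carleson bound~\eqref{A:eqn:S:Carleson} for~$\s^L$. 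The remaining error terms are handled by Lemmas~\ref{A:lem:R:1} and~\ref{A:lem:R:2} (whose proofs use only~\eqref{A:eqn:S:square} and~\eqref{A:eqn:S:Carleson}, not the bound on~$\s^L_\nabla$) together with a standard orthogonality estimate comparing $\mathcal{Q}_t$ and the dyadic averaging~$A_t^Q$. The hypothesis $2m\geq\dmn$ enters through Lemma~\ref{A:lem:kato}, where it guarantees the pointwise heat-kernel bounds underlying the construction of the test functions~$f_{Q,w}$.
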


Before proving these lemmas, we show how they imply the bound \eqref{A:eqn:S:square:k}. Let $\eta$ satisfy the conditions of Lemmas~\ref{A:lem:R:1} and~\ref{A:lem:S:carleson:preliminary}.
Choose some $F\in \dot W^2_m(\R^n)$. Then
\begin{align*}t^{k+1}\partial_t^{m+k}\s^L_\nabla (\mat A_\pureH \nabla_\pureH^m F)(x,t)
&=
R_t^1(\nabla_\pureH^m F)(x,t)
+\sum_{\abs\beta=m} R_t^2 (\mat A_\pureH \arr e_\beta, \partial^\beta F)(x,t)
\\&\qquad
+\sum_{\abs\beta=m}t^{k+1}\partial_t^{m+k}\s^L_\nabla(\mat A_\pureH \arr e_\beta) (x,t) \,\mathcal{Q}_t (\partial^\beta F)(x)
.\end{align*}
The first two terms satisfy square-function estimates by Lemmas~\ref{A:lem:R:1} and~\ref{A:lem:R:2}, while by Lemma~\ref{A:lem:S:carleson:preliminary}, Carleson's lemma (see, for example, \cite[Section~II.2]{Ste93}) and the fact that
\begin{equation}\label{A:eqn:NQ:carleson}\doublebar{N_+(\abs{\mathcal{Q}_t f}^2)}_{L^1(\R^n)} = \doublebar{N_+(\mathcal{Q}_t f)}_{L^2(\R^n)}^2
\leq C\doublebar{f}_{L^2(\R^n)}^2,\end{equation}
where $N_+$ is the nontangential maximal function in Carleson's lemma,
we have that the third term satisfies a square-function estimate.

\subsection{Proof of Lemma~\ref{A:lem:R:2}}

Fix $\arr b\in L^\infty(\R^n)$ and let $R_t^2 f=R_t^2(\arr b,f)$. We seek to bound $R_t^2$ using Lemma~\ref{A:lem:AAAHK}.

We begin with the bound \eqref{A:eqn:AAAHK:decay}. Let $t>0$ and let $Q\subset\R^n$ be a cube with $\ell(Q)\leq t\leq 2\ell(Q)$.

Recall the decay estimate~\eqref{A:eqn:S:variant:decay}: for such $t$ and~$Q$, and for $A_j(Q)$ as in formula~\eqref{A:eqn:annuli},
\begin{equation*}
\int_Q \abs{t^{k+1} \partial_t^{m+k}\s^L_\nabla(\arr h\1_{A_j(Q)})(x,t)}^2
\,dx
\leq
	C2^{-j[2k+1+\varepsilon]}
	\int_{A_j(Q)}\abs{\arr h}^2
.\end{equation*}
Let $K=2k+1+\varepsilon$.
Notice that this estimate is valid even if $2m\leq n$; the assumption $2m\geq \dmn$ is needed only to prove Lemma~\ref{A:lem:S:carleson:preliminary}, and not to prove the present result.

Suppose that $f_j$ is supported in $A_j(Q)$.
Recall that $\mathcal{Q}_t$ denotes convolution with $\eta_t$, and so for any integers $j\geq 0$ and $\ell\geq 0$,
\begin{align*}\doublebar{\mathcal{Q}_t f_j}_{L^\infty(A_\ell(Q))}
&\leq t^{-n} \sup_{x\in A_j(Q),y\in A_\ell(Q)} \abs[bigg]{\eta\biggl(\frac{x-y}{t}\biggr)}\doublebar{f_j}_{L^1(A_j(Q))}\\&\leq {C} t^{-n/2}2^{jn/2} \doublebar{f_j}_{L^2(A_j(Q))}.
\end{align*}
If $\dist(A_j(Q),A_\ell(Q))>0$, we can improve this estimate.
Because $\eta$ is a Schwartz function, we have that if $\ell\leq j-2$ or $\ell\geq j+2$, then for any integer $N>0$ there is a constant $C_N$ such that
\begin{align*}\doublebar{\mathcal{Q}_t f_j}_{L^\infty(A_\ell(Q))}
\leq {C_N} t^{-n/2}2^{jn/2} 2^{-N\max(j,\ell)}\doublebar{f_j}_{L^2(A_j(Q))}.
\end{align*}

Thus, if $f_j$ is supported in $A_j(Q)$, and if $k$ and $N$ are large enough, then
\begin{align}\label{A:eqn:SQ:decay}
\biggl(\int_Q \abs{t^{k+1} \partial_\perp^{m+k}\s^L_\nabla(\arr b \mathcal{Q}_t f_j)(x,t)}^2\,dx\biggr)^{1/2}
&\leq
\sum_{\ell=0}^\infty C2^{-\ell K/2} \doublebar{\arr b}_{L^\infty} \doublebar{\mathcal{Q}_t f_j}_{L^2(A_\ell(Q))}
\\\nonumber&\leq
C2^{-j (K/2-n)} \doublebar{\arr b}_{L^\infty}\doublebar{f_j}_{L^2(A_j(Q))}
.\end{align}
Furthermore,
\begin{multline*}
\int_Q \abs{t^{k+1}\partial_t^{m+k}\s^L_\nabla\arr b(x,t)\, \mathcal{Q}_t f_j(x)}^2\,dx
\\\begin{aligned}
&\leq
C_N t^{-n} 2^{j(n-2N)}\doublebar{f_j}_{L^2(A_j(Q))}^2
\int_Q \abs{t^{k+1}\partial_t^{m+k}\s^L_\nabla\arr b(x,t)}^2\,dx
\end{aligned}\end{multline*}
and if $k$ is large enough then
\begin{align}
\label{A:eqn:S:infty}
\int_Q \abs{t^{k+1}\partial_t^{m+k}\s^L_\nabla\arr b(x,t)}^2\,dx
&\leq
\int_Q \abs[Big]{\sum_{\ell=0}^\infty t^{k+1}\partial_t^{m+k}\s^L_\nabla(\1_{A_\ell(Q)}\arr b)(x,t)}^2\,dx
\\\nonumber&\leq Ct^n\doublebar{\arr b}_{L^\infty}^2
.\end{align}
Thus, if $N$ and $k$ are large enough then
\begin{equation}
\label{A:eqn:R:2:decay}
\int_Q \abs{R_t^2 f_j(x)}^2\,dx
\leq
C 2^{-j(K-2n)}\doublebar{\arr b}_{L^\infty}^2 \doublebar{f_j}_{L^2(A_j(Q))}^2
.\end{equation}
Thus, if $k$ is large enough, then $R_t^2$ satisfies the decay estimate~\eqref{A:eqn:AAAHK:decay}.

For future reference, we observe that we have the same decay estimate on $R_t^1$. Recall that
\begin{align*}
R_t^1 \arr f(x) &= t^{k+1} \partial_t^{m+k}\s^L_\nabla(\mat A_\pureH \arr f)(x,t) -t^{k+1} \partial_\perp^{m+k}\s^L_\nabla(\mat A_\pureH \mathcal{Q}_t\arr f)(x,t)
.\end{align*}
Bounding the first term by the decay estimate~\eqref{A:eqn:S:variant:decay} and the second term by the bound~\eqref{A:eqn:SQ:decay}, we have that
\begin{equation}
\label{A:eqn:R:1:decay}
\int_Q \abs{R_t^1 (\1_{A_j(Q)}\arr f(x)}^2\,dx
\leq
C 2^{-j(K-2n)}\doublebar{\mat A_\pureH}_{L^\infty}^2 \doublebar{\arr f}_{L^2(A_j(Q))}^2
.\end{equation}

We now return to $R_t^2$. As observed in Lemma~\ref{A:lem:AAAHK}, the estimate~\eqref{A:eqn:R:2:decay} (or~\eqref{A:eqn:S:infty}) means that $R_t^21$ may be defined as a locally integrable function. From the definition of $R_t^2$ we may easily see that $R_t^2 1(x)=0$ for all $x\in\R^n$.

Finally,
$\doublebar{\mathcal{Q}_t(\nabla f)}_{L^2(\R^n)} \leq (C/t) \doublebar{f}_{L^2(\R^n)}$ and so $R_t^2$ satisfies the condition~(ii) of Lemma~\ref{A:lem:AAAHK}. Thus,
\begin{equation*}
\int_0^\infty \doublebar{R_t^2 f}_{L^2(\R^n)}^2\frac{dt}{t}\leq C\doublebar{ f}_{L^2(\R^n)}^2
\end{equation*}
as desired.

\subsection{Proof of Lemma~\ref{A:lem:R:1}}
Recall that
\begin{align*}
R_t^1 \arr f(x)
&=
t^{k+1} \partial_\perp^{m+k}\s^L_\nabla(\mat A_\pureH (\arr f - \mathcal{Q}_t\arr f))(x,t)
.
\end{align*}
Thus,
\begin{multline*}
\frac{1}{t^{k+1}}R_t^1 (\nabla_\pureH^m F)(x)
\\=
\sum_{\substack{\abs\alpha=m\\\alpha_\dmn=0}} \sum_{\substack{\abs\beta=m\\\beta_\dmn=0}}
\int_{\R^n} \partial_t^{m+k} \partial_{y}^\alpha E^L(x,t,y,0)\,A_{\alpha\beta}(y)\,\partial_y^\beta (F(y)-\mathcal{Q}_tF(y))\,dy
.\end{multline*}
Integrating by parts in~$y$, applying the fact (see Theorem~\ref{A:thm:fundamental}) that $v(y,s)=E^{L}(x,t,y,s)=\overline{E^{L^*}(y,s,x,t)}$ satisfies $L^*\bar v=0$, and integrating by parts again (and using formula~\eqref{A:eqn:fundamental:vertical}) we see that
\begin{multline*}
\frac{1}{t^{k+1}}R_t^1 (\nabla_\pureH^m F)(x)
\\\begin{aligned}
&=
	-
	\sum_{\substack{\abs\alpha=m\\\mathclap{\abs\delta\leq m-1,\>\delta_\dmn=0}}}
	\int_{\R^n} \partial_t^{m+k}
	\partial_t^{m-\abs\delta}\partial_{y,s}^\alpha E^L(x,t,y,0)
	\,A_{\alpha\delta}(y)\,\partial_y^\delta(F(y)-\mathcal{Q}_tF(y))\,dy
	\\&\qquad+
	\sum_{\substack{\abs\gamma=m-1\\ \mathclap{\abs\beta=m,\>\beta_\dmn=0}}}
	\int_{\R^n}\partial_t^{m+k+1} \partial_{y,s}^\gamma E^L(x,t,y,0))\,A_{\gamma\beta}(y)\,\partial_y^\beta (F(y)-\mathcal{Q}_tF(y))\,dy
.\end{aligned}\end{multline*}
Here $A_{\zeta\xi}=A_{\tilde\zeta\tilde \xi}$, where $\tilde \zeta=\zeta+(m-\abs\zeta)\vec e_\dmn$ is a multiindex of length~$m$ whose horizontal part coincides with that of~$\zeta$.

Recalling formulas~\eqref{A:eqn:S:fundamental} and~\eqref{A:eqn:S:variant} for $\s^L$ and~$\s^L_\nabla$, we see that
\begin{align}
\label{A:eqn:R:1:S}
R_t^1 (\nabla_\pureH^m F)(x)
&=
	-
	\sum_{j=0}^{m-1}
	t^{k+1}\partial_t^{2m+k-j}
	\s^L_\nabla (\mat A_{mj} \nabla_\pureH^j (F-\mathcal{Q}_t F))(x,t)
	\\\nonumber&\qquad+
	t^{k+1}\partial_\perp^{m+k+1} \s^L(\mat A'\nabla_\pureH^mF)(x,t)
	\\\nonumber&\qquad
	-
	t^{k+1}\partial_\perp^{m+k+1} \s^L(\mat A'\nabla_\pureH^m\mathcal{Q}_tF)(x,t)
\end{align}
where $(\mat A_{mj} \arr f)_\alpha= \sum_{\substack{\abs\delta=j\\\delta_\dmn=0}} A_{\alpha\delta}\,f_\delta$ and $(\mat A' \arr f)_\zeta= \sum_{\substack{\abs\beta=m\\\beta_\dmn=0}} A_{\zeta\beta}\,f_\beta$.

By the bound~\eqref{A:eqn:S:square} and the Caccioppoli inequality,
\begin{equation*}\int_0^\infty \int_{\R^n} \abs{t^{k+1}\partial_t^{m+k+1} \s^L(\mat A'\nabla_\pureH^m F)(x,t)}^2\frac{1}{t}\,dx\,dt\leq
C\doublebar{\nabla_\pureH^m F}_{L^2(\R^n)}^2.
\end{equation*}

If $0\leq j\leq m-1$ and $k$ is large enough, then the bound \eqref{A:eqn:S:variant:decay} implies that $\arr h\mapsto t^{m+k-j+1}\partial_t^{2m+k-j}\s^L_\nabla\arr h(\,\cdot\,,t)$ is bounded on $L^2(\R^n)$, uniformly in~$t$. Thus,
\begin{multline*}\int_0^\infty \int_{\R^n} \abs{t^{k+1} \partial_t^{2m+k-j}
	\s^L_\nabla (\mat A_{mj} \nabla_\pureH^j (F-\mathcal{Q}_t F))(x,t)}^2\,\frac{1}{t}\,dx\,dt
\\\leq
\int_0^\infty \int_{\R^n} \abs{t^{j-m} (\nabla_\pureH^j F(x)-\mathcal{Q}_t \nabla_\pureH^jF(x))}^2\,\frac{1}{t}\,dx\,dt
.\end{multline*}
In the statement of Lemma~\ref{A:lem:R:1}, we assumed that the higher moments of the kernel $\eta$ of $\mathcal{Q}_t$ vanish. This implies that $\partial^\zeta \widehat \eta(0)=0$ if $1\leq\abs\zeta\leq m$, and so $\abs{\widehat \eta(\omega)-1}\leq C_\eta \abs{\omega}^{m+1}$. We also have that $\widehat \eta$ is uniformly bounded, and so
\begin{equation*}\abs{\widehat \eta(\omega)-1}\leq C_\eta \min(1,\abs{\omega}^{m+1}).\end{equation*}
By Plancherel's theorem, \begin{equation*}\int_0^\infty \int_{\R^n} \abs{t^{j-m} (\nabla_\pureH^j F(x)-\mathcal{Q}_t \nabla_\pureH^jF(x))}^2\,\frac{1}{t}\,dx\,dt\leq
C\doublebar{\nabla_\pureH^m F}_{L^2(\R^n)}^2
\end{equation*}
and so the first term on the right-hand side of formula~\eqref{A:eqn:R:1:S} satisfies a square-function estimate.

We are left with the term
$t^{k+1}\partial_\perp^{m+k+1} \s^L(\mat A'\nabla_\pureH^m\mathcal{Q}_tF)(x,t)
$.
Recall the definition \eqref{A:eqn:R:2} of $R_t^2$.
Using formula~\eqref{A:eqn:S:S:vertical}, we compute that
\begin{multline*}
t^{k+1}\partial_t^{m+k+1} \s^L(\mat A'\nabla_\pureH^m \mathcal{Q}_tF)(x,t)
\\\begin{aligned}
&=
\!\!\!\sum_{\substack{\abs\beta=m,\>\beta_\dmn=0}} \!\!\!
t^{k+1}\partial_t^{m+k+1} \s^L(\mat A'\arr e_\beta)(x,t)\, \partial_\pureH^\beta \mathcal{Q}_tF(x)
-R_t^2 (\mat A''\arr e_\beta ,\partial_\pureH^\beta F)(x)
\end{aligned}\end{multline*}
where $(\mat A''\arr e_\beta)_\alpha=A_{\alpha\beta}$ if $\alpha_\dmn\geq 1$ and $(\mat A''\arr e_\beta)_\alpha=0$ if $\alpha_\dmn=0$.
By Lemma~\ref{A:lem:R:2}, $R_t^2 (\mat A''\arr e_\beta ,\partial_\pureH^\beta F)(x)$ satisfies a square-function estimate. By the bound~\eqref{A:eqn:S:Carleson}, \begin{equation*}\abs{t^{k+1}\partial_t^{m+k+1} \s^L(\mat A'\arr e_\beta)(x,t)}^2\frac{1}{t}\,dx\,dt\end{equation*}
is a Carleson measure. By the bound~\eqref{A:eqn:NQ:carleson}, $N_+(\mathcal{Q}_t\partial_\pureH^\beta F)\in L^2(\R^n)$ with $L^2$ norm at most $C\doublebar{\nabla_\pureH^m F}_{L^2(\R^n)}$, and so by Carleson's lemma,
\begin{equation*}
\int_0^\infty
\int_{\R^n}
\abs{t^{k+1}\partial_t^{m+k+1} \s^L(\mat A'\arr e_\beta)(x,t)\, \partial_\pureH^\beta \mathcal{Q}_tF(x)}^2 \frac{1}{t}\,dx\,dt
\leq C\doublebar{\nabla_\pureH^m F}_{L^2(\R^n)}^2
.\end{equation*}
This completes the proof.

\subsection{Proof of Lemma~\ref{A:lem:S:carleson:preliminary}}
Recall that we seek to establish a Carleson measure estimate on $t^{k+1} \partial_t^{m+k} \s^L_\nabla \mat A_\pureH(x)$ given that $2m\geq \dmn$. (We may regard $\s^L_\nabla \mat A_\pureH$ as an array of functions with $(\s^L_\nabla \mat A_\pureH)_\beta=\s^L_\nabla (\mat A_\pureH\arr e_\beta)$.)

We will use some tools from the proof of the Kato conjecture, in particular from the paper \cite{AusHMT01}. The following lemma was established therein.

\begin{lem}
Let the matrix $\mat A_\pureH$ be uniformly bounded and satisfy the ellipticity condition
\begin{equation}
\label{A:eqn:kato:elliptic}
\re\langle \nabla^m_\pureH \varphi,\mat A_\pureH \nabla_\pureH^m\varphi\rangle_{\R^n} \geq \lambda\doublebar{\nabla^m_\pureH \varphi}_{L^2(\R^n)}^2
.\end{equation}
Suppose that $2m\geq n$. There is some $W$ depending only on the standard constants such that, for each cube~$Q\subset\R^n$, there exist $W$ functions $f_{Q,w}$
that satisfy the estimates
\begin{align}
\label{A:eqn:kato:sobolev}
\int_R \abs{\nabla_\pureH^m f_{Q,w}}^2 &\leq C \abs{Q} \qquad \text{for any cube $R$ with $\ell(R)=\ell(Q)$},
\\
\label{A:eqn:kato:L}
\abs{L_\pureH f_{Q,w}(x)} &\leq \frac{C}{\ell(Q)^{m}}
,\end{align}
and such that, for any array~$\arr \gamma_t$,
\begin{multline}
\label{A:eqn:kato:test}
\sup_Q \frac{1}{\abs{Q}} \int_0^{\ell(Q)} \int_Q \abs{\arr \gamma_t(x)}^2 \frac{dx\,dt}{t}
\\\leq
	C \sum_{w=1}^W \sup_{Q} \frac{1}{\abs{Q}}
 	\int_0^{\ell(Q)} \int_Q \abs{\langle\arr \gamma_t(x), A_t^Q \nabla^m_\pureH f_{Q,w}(x)\rangle}^2 \frac{dx\,dt}{t}
\end{multline}
where
$A_t^Q f(x)=\fint_{Q'} f(y)\,dy$, for $Q'\subset Q$ the unique dyadic subcube that satisfies $x\in Q'$ and $t\leq \ell(Q')<2t$.
\end{lem}

Here
\begin{equation*}L_\pureH = (-1)^m\,\,\sum_{\mathclap{\substack{\abs\alpha=\abs\beta=m\\ \alpha_\dmn=\beta_\dmn=0}}} \,\, \partial^\alpha(A_{\alpha\beta}\partial^\beta)\end{equation*}
is the elliptic operator of order $2m$ acting on functions defined on $\R^n$ (rather than on~$\R^\dmn$) associated to the coefficients~$\mat A_\pureH$.
As observed in the proof of Lemma~\ref{A:lem:Hodge}, the bound~\eqref{A:eqn:kato:elliptic} follows from the bound~\eqref{A:eqn:elliptic}.

Specifically, the bound~\eqref{A:eqn:kato:L} is the bound (2.19) in \cite{AusHMT01}. The bound~\eqref{A:eqn:kato:sobolev} follows from the bound (2.18) in~\cite{AusHMT01} (if $R=Q$) and the observation that, by Lemma~3.1 in \cite{AusHMT01} and the definition of $f_{Q,w}$ therein, $\nabla_\pureH^m f_{Q,w}=\nabla_\pureH^m f_{R,w}$ whenever $\ell(Q)=\ell(R)$. Finally, the bound~\eqref{A:eqn:kato:test} is simply Lemma~2.2 of \cite{AusHMT01}. The requirement that $2m\geq n$ is a sufficient condition (see \cite[Propositon~2.5]{AusHMT01} or \cite{Dav95,AusT98}) for $L_\parallel$ to satisfy a pointwise upper bound; this condition is assumed in the proofs of the above results.

Thus, we need only show that, for any cube~$Q\subset\R^n$,
\begin{equation*}\frac{1}{\abs{Q}}
 	\int_0^{\ell(Q)} \int_Q \abs{\langle t^{k+1} \partial_t^{m+k} \s^L_\nabla \mat A_\pureH(x,t), A_t^Q \nabla^m_\pureH f_{Q,w}(x)\rangle}^2 \frac{dx\,dt}{t}
\leq C
.\end{equation*}

Now, by formulas~\eqref{A:eqn:S:variant} and~\eqref{A:eqn:S:fundamental} for $\s^L_\nabla$ and~$\s^L$ and by formula~\eqref{A:eqn:fundamental:vertical},
\begin{equation*}\partial_t^{m+k} \s^L_\nabla (\mat A_\pureH \nabla^m_\pureH f_{Q,w})(x,t)
=-\partial_t^{k+1}\s^L(L_\pureH f_{Q,w}\arr e_{\perp})(x,t)
\end{equation*}
where $\arr e_\perp=\arr e_{(m-1)\vec e_\dmn}$ is the multiindex corresponding to purely vertical derivatives.
Thus, by the bounds~\eqref{A:eqn:S:Carleson} and~\eqref{A:eqn:kato:L}, if $k$ is large enough then
\begin{equation*}\frac{1}{\abs{Q}}
 	\int_0^{\ell(Q)} \int_Q \abs{ t^{k+1} \partial_t^{m+k} \s^L_\nabla (\mat A_\pureH\nabla^m_\pureH f_{Q,w})(x,t)}^2 \frac{dx\,dt}{t}
\leq C
.\end{equation*}

So we need only bound
\begin{equation*}
	t^{k+1}\partial_t^{m+k} \s^L_\nabla (\mat A_\pureH\nabla^m_\pureH f_{Q,w})(x,t)
	-
	t^{k+1}\partial_t^{m+k} \s^L_\nabla \mat A_\pureH(x,t)\cdot A_t^Q \nabla^m_\pureH f_{Q,w}(x)
.\end{equation*}
By formulas~\eqref{A:eqn:R:1} and \eqref{A:eqn:R:2} for $R_t^1$ and~$R_t^2$,
\begin{multline*}
R_t^1 (\nabla_\pureH^m f_{Q,w})(x)
+
\sum_{\abs\beta=m,\>\beta_\dmn=0}
R_t^2 (\mat A_\pureH \arr e_\beta,\partial_\pureH^\beta f_{Q,w})(x)
\\=
t^{k+1} \partial_t^{m+k}\s^L_\nabla(\mat A_\pureH \nabla_\pureH^m f_{Q,w})(x,t)
-t^{k+1}\partial_t^{m+k}\s^L_\nabla\mat A_\pureH(x,t)\cdot \mathcal{Q}_t\nabla_\pureH^m f_{Q,w}(x)
.\end{multline*}
Thus, we must bound $R_t^1 (\nabla_\pureH^m f_{Q,w})$, $R_t^2 (\mat A_\pureH ,\nabla_\pureH^m f_{Q,w})=
\sum_\beta
R_t^2 (\mat A_\pureH \arr e_\beta,\partial_\pureH^\beta f_{Q,w})
$ and
\begin{equation}\label{A:eqn:kato:argument}
	t^{k+1}\partial_t^{m+k}\s^L_\nabla\mat A_\pureH(x,t)\cdot (\mathcal{Q}_t\nabla_\pureH^m f_{Q,w}(x)
	- A_t^Q \nabla^m_\pureH f_{Q,w}(x) )
.\end{equation}

We have established boundedness of $R_t^1(\nabla_\pureH^m F)$ and $R_t^2(\arr b, f)$ for $\nabla_\pureH^m F\in L^2(\R^n)$, $f\in L^2(\R^n)$, rather than for $\nabla_\pureH^m F$ satisfying the bound \eqref{A:eqn:kato:sobolev}. Thus, more work must be done to contend with $R_t^1$ and~$R_t^2$.
Let $\chi$ be a smooth cutoff function that is equal to 1 on $4Q$ and supported on $8Q$. If we normalize $f_{Q,w}$ so that $\int_{8Q} \nabla_\pureH^j f_{Q,w}=0$ whenever $0\leq j\leq m-1$, then by the Poincar\'e inequality and the bound~\eqref{A:eqn:kato:sobolev}, $\chi f_{Q,w}\in \dot W^2_m(\R^n)$ with bounded norm.
By the established square-function estimates on $R_t^1$ and $R_t^2$,
\begin{equation*}\frac{1}{\abs{Q}}
 	\int_0^{\ell(Q)} \int_Q \abs{ R_t^1 (\nabla_\pureH^m (\chi f_{Q,w}))(x) + R_t^2 (\mat A_\pureH,\nabla_\pureH^m (\chi f_{Q,w}))(x) }^2 \frac{dx\,dt}{t}
\leq C
.\end{equation*}
Recall that $R_t^1$ and $R_t^2$ satisfy the decay estimates~\eqref{A:eqn:R:1:decay} and~\eqref{A:eqn:R:2:decay}. Combined with the bound~\eqref{A:eqn:kato:sobolev} on~$\nabla_\pureH^m f_{Q,w}$, we may establish that
\begin{align*}\frac{1}{\abs{Q}}
 	\int_0^{\ell(Q)} \int_Q \abs{ R_t^2 (\mat A_\pureH,\nabla_\pureH^m ((1-\chi)f_{Q,w}))(x) }^2 \frac{dx\,dt}{t}
\leq C
,\\
\frac{1}{\abs{Q}}
 	\int_0^{\ell(Q)} \int_Q \abs{ R_t^1 (\nabla_\pureH^m ((1-\chi)f_{Q,w}))(x) }^2 \frac{dx\,dt}{t}
\leq C
.\end{align*}

We are left with the term \eqref{A:eqn:kato:argument}. By the bound \eqref{A:eqn:S:infty} and the local bound \eqref{A:eqn:Meyers:lowest}, if $k$ is large enough then
\begin{equation*}\abs{t^{k+1} \partial_t^{m+k} \s^L_\nabla \mat A_\pureH(x,t)}
\leq
C\doublebar{\mat A_\pureH}_{L^\infty(\R^n)}^2.\end{equation*}
Thus, it suffices to show that
\begin{equation*}\frac{1}{\abs{Q}}
 	\int_0^{\ell(Q)} \int_Q \abs{ A_t^Q \nabla^m_\pureH f_{Q,w}(x) - Q_t\nabla_\pureH^m f_{Q,w}(x) }^2 \frac{dx\,dt}{t}
\leq C
.\end{equation*}
This follows from a standard orthogonality estimate. See \cite[Section~9.3]{BarHM17} for the details of this argument in the present case, under the assumption that $\eta$ is supported in $B(0,1/2)$. This completes the proof.

\section{\texorpdfstring{$\D^{\mat A}$}{The double layer potential}: square-function estimates}
\label{A:sec:D:square:variant}

In this section we will establish the following square function estimate on the double layer potential; this is the bound~\eqref{A:eqn:D:square:rough:intro} of Theorem~\ref{A:thm:square:rough}. The key argument is formula~\eqref{A:eqn:D:SD}; from this formula the desired bound follows from the known results of \cite{BarHM17} and Section~\ref{A:sec:S:square:variant}.

\begin{thm}\label{A:thm:D:square:variant}
Suppose that $L$ is an operator of the form~\eqref{A:eqn:L} of order~$2m$, $m\geq 1$, acting on functions defined in $\R^\dmn$, $\dmnMinusOne\geq 1$, associated to coefficients~$\mat A$ that are $t$-independent in the sense of formula~\eqref{A:eqn:t-independent} and satisfy the bounds \eqref{A:eqn:elliptic} and~\eqref{A:eqn:elliptic:bounded}.

Then the double layer potential $\D^{\mat A}$ satisfies the square-function estimate
\begin{align}
\label{A:eqn:D:square:variant}
\int_{\R^n}\int_{-\infty}^\infty \abs{\nabla^m \D^{\mat A} \arr f(x,t)}^2\,\abs{t}\,dt\,dx
	& \leq C \doublebar{\arr f}_{L^2(\R^n)}^2
\end{align}
for all $\arr f\in \dot W\!A^2_{m-1,0}(\R^n)$.
\end{thm}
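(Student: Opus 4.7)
\medskip

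The plan is to follow the blueprint of Theorem~\ref{A:thm:S:square:variant} closely. First, by an argument exactly parallel to Remark~\ref{A:rmk:lower}, it suffices to establish the bound in the upper half-space. Then I would prove an analog of Lemma~\ref{A:lem:square:Theta} with $\s^L_\nabla\arr f$ replaced by $\D^{\mat A}\arr f$: setting $U_j(t)=\int_Q\abs{\nabla^m\partial_t^j\D^{\mat A}\arr f(x,t)}^2\,dx$, the same telescoping integration-by-parts argument (combined with a far-field decay estimate on iterated vertical derivatives of $\D^{\mat A}\arr f$, obtainable from Theorem~\ref{A:thm:fundamental} as in Lemma~\ref{A:lem:lusin:S}) reduces the theorem to the estimate
\begin{equation*}
\int_{\R^n}\int_0^\infty \abs{t^k\partial_t^{m+k}\D^{\mat A}\arr f(x,t)}^2\,t\,dt\,dx \leq C\doublebar{\arr f}_{L^2(\R^n)}^2
\end{equation*}
for some sufficiently large $k$ and all $\arr f$ in the dense subspace $\mathfrak{D}$ of Definition~\ref{A:dfn:Whitney}.

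The second step is to re-express the quantity under the integral as (essentially) the modified single layer potential $\s^L_\nabla$ applied to an array of $L^2$ data built from $\arr f$, plus tractable remainders. Given $\arr f\in\mathfrak{D}$, write $\arr f=\Tr_{m-1}^+\varphi$ and extend $\varphi$ to a smooth, compactly supported function $F\in\dot W^2_m(\R^\dmn_-)$. Starting from the fundamental-solution formula~\eqref{A:eqn:D:fundamental}, I would integrate by parts in the $(y,s)$ variables to move all $m$ derivatives off $F$, exploiting the $t$-independence of $\mat A$ and the vertical-derivative identity~\eqref{A:eqn:fundamental:vertical}. Each boundary contribution at $s=0$ is of the form
\begin{equation*}
\int_{\R^n}\partial^\zeta_{x,t}\partial^\mu_{y,s}E^L(x,t,y,0)\,f_\gamma(y)\,dy,
\end{equation*}
which, by the definitions~\eqref{A:eqn:S:fundamental} and~\eqref{A:eqn:S:variant} together with the conversion identities~\eqref{A:eqn:S:S:vertical} and~\eqref{A:eqn:S:S:horizontal}, can be identified as a vertical derivative of $\s^L_\nabla\arr h$ for some array $\arr h$ that is a bounded linear function of $\arr f$, with $\doublebar{\arr h}_{L^2(\R^n)}\leq C\doublebar{\arr f}_{L^2(\R^n)}$. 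The leftover volume term over $\R^\dmn_-$ involves $F$ multiplied by $\partial^\zeta_{x,t}\partial^{\beta+\xi}_{y,s}E^L\cdot A_{\beta\xi}$; this will be handled in the spirit of Lemma~\ref{A:lem:R:1} by writing $F$ as $\mathcal{Q}_s F\vert_{s=0}$ plus an error, producing $R_t^1$- and $R_t^2$-type commutator terms together with a paraproduct against $\D^{\mat A}\mathbf{1}$ or $\s^L_\nabla\mat A_\pureH$ that is controlled by the Carleson measure estimate~\eqref{A:eqn:S:Carleson} and Carleson's lemma.

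Having performed this decomposition, the main pieces become $t^k\partial_t^{m+k}\s^L_\nabla\arr h_i(x,t)$ (or expressions differing by one further vertical derivative), and the square-function estimate for each such piece is immediate from Theorem~\ref{A:thm:S:square:variant} or from Lemma~\ref{A:lem:lusin:S}. The commutator and paraproduct remainders are handled by direct analogs of Lemmas~\ref{A:lem:R:1}, \ref{A:lem:R:2}, and~\ref{A:lem:S:carleson:preliminary}. The main obstacle I expect is the bookkeeping in the integration-by-parts step: components $f_\gamma$ with vertical index $\gamma_\dmn>0$ naturally yield terms expressible via $\s^L_\nabla$ through~\eqref{A:eqn:S:S:vertical}, while components with $\gamma_\dmn=0$ require tangential integrations by parts and identification via~\eqref{A:eqn:S:S:horizontal}, and one must verify that all the resulting pieces assemble into arrays $\arr h_i\in L^2(\R^n)$ whose norms are dominated by $\doublebar{\arr f}_{L^2(\R^n)}$ without loss. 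This combinatorial verification, together with matching the Kato-type step of Lemma~\ref{A:lem:S:carleson:preliminary} to the new paraproduct coefficients that arise, is where the delicate work lies.
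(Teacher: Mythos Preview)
Your plan is much more elaborate than the paper's proof and, more importantly, has a real obstruction in the integration-by-parts step.

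The paper's argument is a one-page reduction built on a \emph{specific} extension of~$\arr f$. Given $\arr f=\Tr_{m-1}f$ with $f\in C_0^\infty(\R^\dmn)$, set
\[
\psi(x,t)=\sum_{j=0}^{m-1}\frac{t^{j+1}}{(j+1)!}\,\eta(t)\,\partial_\perp^j f(x,0),\qquad \widetilde f=\partial_t\psi,
\]
so that $\Tr_{m-1}\widetilde f=\arr f$. Plugging $\widetilde f$ into formula~\eqref{A:eqn:D:fundamental} and integrating by parts \emph{once} in~$s$ (legitimate because $\mat A$ is $t$-independent) gives the exact identity
\[
\partial^\alpha\D^{\mat A}\arr f = -\partial^\alpha\s^L_\nabla\bigl(\mat A\nabla^m\psi\big\vert_{\R^n}\bigr) + \partial^\alpha\partial_t\D^{\mat A}(\Tr_{m-1}\psi).
\]
By the explicit form of~$\psi$, one checks $\partial^\xi\psi(y,0)=f_{\xi-\vec e_\dmn}$ if $\xi_\dmn\geq 1$ and $=0$ if $\xi_\dmn=0$; hence $\doublebar{\mat A\nabla^m\psi\vert_{\R^n}}_{L^2}\leq C\doublebar{\arr f}_{L^2}$. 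Likewise $\Tr_{m-1}\psi$ is built from order-$(m-2)$ boundary derivatives of~$f$, so $\doublebar{\nabla_\pureH\Tr_{m-1}\psi}_{L^2}\leq C\doublebar{\arr f}_{L^2}$. Theorem~\ref{A:thm:S:square:variant} handles the first term and the known bound~\eqref{A:eqn:D:square} the second. That is the whole proof: no analog of Lemma~\ref{A:lem:square:Theta}, no $R_t^1$/$R_t^2$ commutators, no Kato step.

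The gap in your scheme is the claim that one can ``integrate by parts in the $(y,s)$ variables to move all $m$ derivatives off~$F$.'' Horizontal integration by parts is unavailable: the coefficients $A_{\beta\xi}(y)$ are merely bounded measurable, so differentiating them is meaningless. Only vertical $s$-derivatives can be moved, and for a generic extension~$F$ the boundary term after one $s$-integration by parts involves $\partial^\xi F(y,0)$ with $\abs\xi=m$, which is \emph{not} determined by $\arr f=\Tr_{m-1}F$ and has no $L^2$ control in terms of~$\doublebar{\arr f}_{L^2}$. The paper's choice $F=\partial_s\psi$ is engineered precisely so that one $s$-integration undoes the extra $\partial_s$ and the resulting boundary data $\nabla^m\psi\vert_{s=0}$ is a bounded function of~$\arr f$. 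Your proposed treatment of the volume remainder (writing $F=\mathcal{Q}_sF\vert_{s=0}+\text{error}$) has the same defect: the lower-order norms of~$F$ on~$\R^\dmn_-$ are not dominated by $\doublebar{\arr f}_{L^2}$, so the $R_t^1$/$R_t^2$ machinery of Section~\ref{A:sec:S:square:variant} does not apply.
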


\begin{proof} By Remark~\ref{A:rmk:lower}, it suffices to work in the upper half-space. Furthermore, it suffices to establish this estimate under the assumption that $\arr f=\Tr_{m-1}f$ for some $f\in C^\infty_0(\R^\dmn)$.

Let $\eta$ be a smooth, compactly supported cutoff function defined on $\R$ with $\eta\equiv 1$ near zero. Define
\begin{equation*}\psi(x,t)= \sum_{j=0}^{m-1} \frac{1}{(j+1)!}t^{j+1}\,\eta(t)\,\partial_\perp^j f(x,0)\end{equation*}
and let $\widetilde f(x,t)=\partial_t \psi(x,t)$. Observe that $\arr f=\Tr_{m-1}f=\Tr_{m-1}\widetilde f$. Furthermore, if $\abs\xi=m$, then $\partial^\xi \psi(x,0)=\partial^{\xi-\vec e_\dmn}f(x,0)$ if $\xi_\dmn\geq 1$ and $\partial^\xi \psi(x,0)=0$ if $\xi_\dmn=0$.

By formula~\eqref{A:eqn:D:fundamental}, if $\abs\alpha=m$ then
\begin{align*}
\partial^\alpha \D^{\mat A} \arr f(x,t)
&=
- \!\!\sum_{\substack{\abs{\beta}=\abs{\xi}=m}} \int_{\R^\dmn_-} \partial_{x,t}^\alpha \partial_{y,s}^\beta E^L(x,t,y,s)\,A_{\beta\xi}(y) \, \partial^\xi \partial_s \psi(y,s)\,ds\,dy.
\end{align*}
Integrating by parts in~$s$, we see that
\begin{align*}
\partial^\alpha \D^{\mat A} \arr f(x,t)
&=
- \!\!\sum_{\substack{\abs{\beta}=\abs{\xi}=m}} \int_{\R^n} \partial_{x,t}^\alpha \partial_{y,s}^\beta E^L(x,t,y,0)\,A_{\beta\xi}(y) \, \partial^\xi \psi(y,0)\,dy
\\&\qquad
+ \!\!\sum_{\substack{\abs{\beta}=\abs{\xi}=m}} \int_{\R^\dmn_-} \partial_{x,t}^\alpha \partial_{y,s}^\beta \partial_s E^L(x,t,y,s)\,A_{\beta\xi}(y) \, \partial^\xi \psi(y,s)\,ds\,dy.
\end{align*}
By formula~\eqref{A:eqn:fundamental:vertical} and formulas~\eqref{A:eqn:S:variant} and~\eqref{A:eqn:D:fundamental} for $\s^L_\nabla$ and~$\D^{\mat A}$,
\begin{align}
\label{A:eqn:D:SD}
\partial^\alpha \D^{\mat A} \arr f(x,t)
&=
- \partial^\alpha
\s^L_\nabla(\mat A\nabla^m\psi\big\vert_{\R^n})(x,t)
+\partial^\alpha \partial_t \D^{\mat A}(\Tr_{m-1}\psi)(x,t)
.\end{align}
Thus,
\begin{align*}\int_{\R^n}\int_0^\infty \abs{\nabla^m \D^{\mat A} \arr f(x,t)}^2\,{t}\,dt\,dx
&\leq
C\int_{\R^n}\int_0^\infty \abs{\nabla^m\s^L_\nabla(\mat A\nabla^m\psi\big\vert_{\R^n}) (x,t)}^2\,{t}\,dt\,dx
\\&\qquad+C\int_{\R^n}\int_0^\infty \abs{\nabla^m \partial_t \D^{\mat A}(\Tr_{m-1}\psi)(x,t)}^2\,{t}\,dt\,dx
.\end{align*}
We have that $\doublebar{\nabla_\pureH \Tr_{m-1}\psi}_{L^2(\R^n)}\leq C\doublebar{\arr f}_{L^2(\R^n)}$, and so by the bound \eqref{A:eqn:D:square} the second integral on the right-hand side is at most~$C\doublebar{\arr f}_{L^2(\R^n)}^2$. Also, we have that $\doublebar{\mat A\nabla^m \psi\big\vert_{\R^n}}_{L^2(\R^n)}\leq C\doublebar{\arr f}_{L^2(\R^n)}$ and so by Theorem~\ref{A:thm:S:square:variant} the first integral is at most~$C\doublebar{\arr f}_{L^2(\R^n)}^2$.
\end{proof}

\section{Vertical derivatives of \texorpdfstring{$\s^L_\nabla$}{the modified single layer potential}: estimates for \texorpdfstring{$p>2$}{p>2} and other bounds}
\label{A:sec:S:vertical:variant}

In this section we prove the bounds \eqref{A:eqn:S:variant:Carleson}, \eqref{A:eqn:lusin:p:variant:intro} and~\eqref{A:eqn:S:Schwartz:p} on $\s^L_\nabla$ of
Theorem~\ref{A:thm:Carleson:intro}. These estimates will be used in our paper~\cite{BarHM17pB} to establish certain Fatou-type theorems.

\begin{lem}\label{A:lem:S:carleson:variant}
Suppose that $L$ is an operator of the form~\eqref{A:eqn:L} of order~$2m$, $m\geq 1$, acting on functions defined in $\R^\dmn$, $\dmnMinusOne\geq 1$, associated to coefficients~$\mat A$ that are $t$-independent in the sense of formula~\eqref{A:eqn:t-independent} and satisfy the bounds \eqref{A:eqn:elliptic} and~\eqref{A:eqn:elliptic:bounded}.

Then we have the Carleson measure estimate
\begin{align*}
\sup_{Q\subset\R^n}
\frac{1}{\abs{Q}}\int_Q\int_{0}^{\ell(Q)} \abs{t^k \partial_t^{m+k}\s^L_\nabla \arr b(x,t)}^2\,{t}\,dt\,dx
	& \leq C \doublebar{\arr b}_{L^\infty(\R^n)}^2
\end{align*}
for any $k\geq (n-1)/2$, where the supremum is taken over all cubes $Q$ contained in~$\R^n$. By the Caccioppoli inequality, the estimate
\begin{equation*}\sup_{Q\subset\R^n} \frac{1}{\abs{Q}}\int_Q \int_0^{\ell(Q)} \abs{t^{k+m} \nabla^m \partial_t^{m+k}\s^L_\nabla \arr b(x,t)}^2\,t\,dx\,dt
\leq C\doublebar{\arr b}_{L^\infty(\R^n)}^2\end{equation*}
is also valid. Corresponding estimates are valid in the lower half-space.

Furthermore, the bound
\begin{equation*}
\doublebar{\mathcal{A}_2^\pm (\abs t^{k+1}\,\partial_t^{m+k} \s^L_\nabla \arr h)}_{L^p(\R^n)}\leq C(p) \doublebar{\arr h}_{L^p(\R^n)}
\end{equation*}
is valid for all $2\leq p<\infty$, where $\mathcal{A}_2^\pm$ is as in formula~\eqref{A:eqn:lusin}.
\end{lem}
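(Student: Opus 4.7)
The plan is to follow the same scheme as the proof of Lemma~\ref{A:lem:lusin:S}, taking Theorem~\ref{A:thm:S:square:variant} as the input square-function estimate (in place of \eqref{A:eqn:S:square}) and using the modified-potential decay estimate already recorded in \eqref{A:eqn:S:variant:decay}. By Remark~\ref{A:rmk:lower} it suffices to work in the upper half-space. Set
\begin{equation*}
\Theta_t\arr f(x) = t^{k+1}\partial_t^{m+k}\s^L_\nabla\arr f(x,t),
\end{equation*}
so that the Carleson estimate becomes $\sup_Q \frac{1}{|Q|}\int_Q\int_0^{\ell(Q)} |\Theta_t\arr b|^2\,\frac{dx\,dt}{t} \leq C\doublebar{\arr b}_{L^\infty}^2$ and the area-integral estimate becomes $\doublebar{\mathcal{A}_2(\Theta_t\arr h)}_{L^q} \leq C\doublebar{\arr h}_{L^q}$.

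The first task is to verify the $L^2$ square-function estimate~\eqref{A:eqn:square:carleson} for $\Theta_t$. Since $L\s^L_\nabla\arr h=0$ away from $\{t=0\}$, the Caccioppoli inequality applied in Whitney balls of radius $\sim t$ (equivalently, Lemma~\ref{A:lem:slices}) allows us to trade the extra $k$ vertical derivatives against the full $\nabla^m$ at the cost of a factor $t^{-2k}$ in the integrand. This reduces the desired $\int_{\R^\dmn_+} |\Theta_t\arr h|^2\,\frac{dt\,dx}{t}$ bound to
\begin{equation*}
\int_{\R^\dmn_+} |\nabla^m\s^L_\nabla\arr h(x,t)|^2\,t\,dt\,dx \leq C\doublebar{\arr h}_{L^2(\R^n)}^2,
\end{equation*}
which is exactly Theorem~\ref{A:thm:S:square:variant}.

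The second task is to verify the off-diagonal decay~\eqref{A:eqn:Carleson:decay}; this is already recorded as formula~\eqref{A:eqn:S:variant:decay}, which for $\ell(Q)\leq t\leq 2\ell(Q)$ reads
\begin{equation*}
\int_Q |\Theta_t(\arr h\1_{A_j(Q)})(x)|^2\,dx \leq C\,2^{-j(2k+1+\varepsilon)}\int_{A_j(Q)} |\arr h|^2.
\end{equation*}
Under the hypothesis $k\geq (n-1)/2$ one has $2k+1+\varepsilon \geq n+\varepsilon$, which is precisely the decay required by Lemma~\ref{A:lem:square:carleson}. Invoking that lemma yields the Carleson measure estimate for $\Theta_t\arr b$ with $\arr b\in L^\infty(\R^n)$; the $\nabla^m$-version follows by applying the Caccioppoli inequality inside Whitney cubes, and the corresponding lower half-space estimates follow via Remark~\ref{A:rmk:lower}. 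Finally, the tent-space interpolation of Remark~\ref{A:rmk:interpolation:carleson} upgrades the square-function bound on $\Theta_t$ to $\doublebar{\mathcal{A}_2(\Theta_t\arr h)}_{L^q}\leq C(q)\doublebar{\arr h}_{L^q}$ for all $2\leq q<\infty$.

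All of the pieces (the $L^2$ square-function bound of Theorem~\ref{A:thm:S:square:variant}, the decay estimate \eqref{A:eqn:S:variant:decay}, Lemma~\ref{A:lem:square:carleson}, Caccioppoli, and Remark~\ref{A:rmk:interpolation:carleson}) are already in place, so there is no real obstacle; the only verification that needs attention is the Caccioppoli step that converts Theorem~\ref{A:thm:S:square:variant} into the square-function estimate for $\Theta_t$, which is routine since $\s^L_\nabla\arr h$ is a solution of $Lu=0$ on each horizontal slab and $t$-independence of $\mat A$ permits differentiation in $t$ without cost.
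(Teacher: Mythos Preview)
Your proposal is correct and follows essentially the same approach as the paper's own proof: define $\Theta_t\arr h=t^{k+1}\partial_t^{m+k}\s^L_\nabla\arr h(\,\cdot\,,t)$, verify the square-function hypothesis~\eqref{A:eqn:square:carleson} of Lemma~\ref{A:lem:square:carleson} via Theorem~\ref{A:thm:S:square:variant} and Caccioppoli in Whitney balls, verify the decay hypothesis~\eqref{A:eqn:Carleson:decay} via~\eqref{A:eqn:S:variant:decay} (using $2k+1\geq n$), and then invoke Lemma~\ref{A:lem:square:carleson} and Remark~\ref{A:rmk:interpolation:carleson}. One small quibble: the parenthetical ``(equivalently, Lemma~\ref{A:lem:slices})'' is not quite apt, since Lemma~\ref{A:lem:slices} is a slice estimate rather than the Whitney-ball Caccioppoli reduction you actually use; the Caccioppoli step alone (iterated $k$ times on $\partial_t^j u$, each a solution by $t$-independence) is what converts $\int t^{2k+1}\abs{\partial_t^{m+k}\s^L_\nabla\arr h}^2$ into $\int t\,\abs{\nabla^m\s^L_\nabla\arr h}^2$.
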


\begin{proof} By Remark~\ref{A:rmk:lower}, it suffices to work in the upper half-space.
By Theorem~\ref{A:thm:S:square:variant} and the Caccioppoli inequality applied in Whitney cubes, and by the decay estimate~\eqref{A:eqn:S:variant:decay}, if
$2k+1\geq n$, then the operators
\begin{equation*}\Theta_t \arr h=t^{k+1} \partial_t^{m+k}\s^L_\nabla\arr h(\,\cdot\,,t)\end{equation*}
satisfy the conditions of Lemma~\ref{A:lem:square:carleson} and Remark~\ref{A:rmk:interpolation:carleson}, and so the given bounds are valid.
\end{proof}

We conclude this section by establishing the bound~\eqref{A:eqn:S:Schwartz:p}; this estimate will be of use in \cite{BarHM17pB}.
Recall that we have square-function estimates on $R_t^2$, where
\begin{align*}
R_t^2(\arr b,f)(x)
&= t^{k+1} \partial_\perp^{m+k}\s^L_\nabla(\arr b\, \mathcal{Q}_t f)(x,t)-t^{k+1}\partial_t^{m+k}\s^L_\nabla\arr b(x,t)\, \mathcal{Q}_t f(x)
\end{align*}
and where $\partial_\perp^{m+k}\s^L_\nabla(\arr b\mathcal{Q}_t f)(x,t)=\partial_t^{m+k}\s^L_\nabla(\arr b\mathcal{Q}_s f)(x,t)\big\vert_{s=t}$.
We would like to estimate the term $t^{k+1} \partial_\perp^{m+k}\s^L_\nabla(\arr b\, \mathcal{Q}_t f)(x,t)$ alone.

\begin{lem}\label{A:lem:S:Schwartz:p}
Suppose that $L$ is an operator of the form~\eqref{A:eqn:L} of order~$2m$, $m\geq 1$, acting on functions defined in $\R^\dmn$, $\dmnMinusOne\geq 1$, associated to coefficients~$\mat A$ that are $t$-independent in the sense of formula~\eqref{A:eqn:t-independent} and satisfy the bounds \eqref{A:eqn:elliptic} and~\eqref{A:eqn:elliptic:bounded}.

Let $\eta$ be a Schwartz function defined on $\R^n$ with $\int\eta=1$.
Let $\mathcal{Q}_t$ denote convolution with $\eta_t=t^{-n}\eta(\,\cdot\,/t)$.
Let $k>0$ be an integer. Let $\arr b$ be any array of bounded functions. If $k$ is large enough, then for any $p$ with $1<p<\infty$, we have that
\begin{equation*}
\doublebar{\mathcal{A}_2^\pm(\abs t^{k+1}\partial_\perp^{k+m}\s^L_\nabla (\arr b \mathcal{Q}_{\abs t} h))}_{L^p(\R^n)}
\leq C(p)\doublebar{\arr b}_{L^\infty(\R^n)}\doublebar{h}_{L^p(\R^n)}
\end{equation*}
where $\mathcal{A}_2^\pm$ is as in formula~\eqref{A:eqn:lusin}, and
where the constant $C(p)$ depends only on $p$, $k$, the Schwartz constants of~$\eta$, and on the standard parameters $n$, $m$, $\lambda$, and~$\Lambda$.
\end{lem}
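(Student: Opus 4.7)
Following the decomposition used in the proof of Theorem~\ref{A:thm:S:square:variant}, I would write
\begin{equation*}
|t|^{k+1}\partial_\perp^{k+m}\s^L_\nabla(\arr b\,\mathcal{Q}_{|t|} h)(x,t) = R_{|t|}^2(\arr b, h)(x) + |t|^{k+1}\partial_t^{k+m}\s^L_\nabla \arr b(x,t)\cdot \mathcal{Q}_{|t|} h(x),
\end{equation*}
with $R_t^2$ as in formula~\eqref{A:eqn:R:2}, and estimate the two summands separately; by Remark~\ref{A:rmk:lower} it suffices to treat $\mathcal{A}_2^+$ in the upper half-space.

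The Carleson piece is handled by appealing to Lemma~\ref{A:lem:S:carleson:variant}, which says that, for $k$ large enough, the measure $d\mu(y,t)=|t^{k+1}\partial_t^{k+m}\s^L_\nabla \arr b(y,t)|^2\,dy\,dt/t$ is Carleson with norm $\lesssim \|\arr b\|_{L^\infty}^2$. Combined with the $L^p$ Carleson embedding $\iint |\mathcal{Q}_t h|^p\,d\mu \lesssim \|d\mu\|_C \|h\|_{L^p}^p$ for $1<p<\infty$, and the pointwise control of $\mathcal{Q}_{|t|}h$ along cones by a Hardy--Littlewood type maximal function (valid because $\eta$ is Schwartz), a standard Coifman--Meyer--Stein tent-space argument yields $\|\mathcal{A}_2^+(|t|^{k+1}\partial_t^{k+m}\s^L_\nabla \arr b\cdot \mathcal{Q}_{|t|} h)\|_{L^p(\R^n)}\lesssim \|\arr b\|_{L^\infty}\|h\|_{L^p(\R^n)}$ for all $1<p<\infty$.

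For the $R_{|t|}^2$ piece, Lemma~\ref{A:lem:R:2} provides the $L^2$ tent-space bound $\|\mathcal{A}_2^+ R_{|t|}^2(\arr b,h)\|_{L^2}\lesssim \|\arr b\|_{L^\infty}\|h\|_{L^2}$, and the proof of that lemma supplies the off-diagonal $L^2$ decay estimate~\eqref{A:eqn:R:2:decay} for $k$ large. For $2\le p<\infty$ the desired $L^p$ estimate follows at once from Remark~\ref{A:rmk:interpolation:carleson}. For $1<p<2$, I would perform a Calder\'on--Zygmund decomposition of $h$ at height $\lambda$: the good part is absorbed by the $L^2$ bound, and for each bad atom $b_Q$ supported in a cube $Q\subset\R^n$ with $\int b_Q=0$ the $t$-integration is split at $t\sim\ell(Q)$. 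For $t\lesssim\ell(Q)$ one uses the $L^2$ estimate together with the support properties of $b_Q$; for $t\gg\ell(Q)$ the mean-zero property of $b_Q$ combined with the Schwartz regularity of $\eta$ extracts a factor $(\ell(Q)/t)^N$ from $\mathcal{Q}_{|t|}b_Q$, which combined with~\eqref{A:eqn:R:2:decay} yields summable tails. Marcinkiewicz interpolation completes the case $1<p<2$.

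The principal technical obstacle is this weak-type step for $R_t^2$: because our kernel estimates on $\s^L_\nabla$ are only of averaged $L^2$ rather than pointwise type, the Calder\'on--Zygmund argument must be run entirely through the decay estimate~\eqref{A:eqn:R:2:decay} together with the Schwartz smoothness of $\eta$, carefully tracking annulus by annulus how much cancellation the convolution $\mathcal{Q}_{|t|}$ supplies to offset the decay loss over each $A_j(Q)$. This is essentially a square-function analogue of standard Calder\'on--Zygmund $L^p$ extrapolation for operators whose kernels are controlled only in an averaged sense.
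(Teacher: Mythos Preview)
For $p\ge 2$ your outline matches the paper: the $L^2$ case via the $R_t^2$/Carleson split and Lemmas~\ref{A:lem:R:2} and~\ref{A:lem:S:carleson:variant}, and then $2<p<\infty$ by Remark~\ref{A:rmk:interpolation:carleson} together with the decay estimate~\eqref{A:eqn:SQ:decay}.

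For $1<p<2$ there is a genuine gap in your treatment of the Carleson piece. The ``$L^p$ Carleson embedding'' $\iint|\mathcal{Q}_t h|^p\,d\mu\lesssim\|d\mu\|_{\mathcal C}\|h\|_{L^p}^p$ is the wrong inequality: you need $\|\mathcal{A}_2(G\,\mathcal{Q}_t h)\|_{L^p}$ with $G(y,t)=t^{k+1}\partial_t^{m+k}\s^L_\nabla\arr b(y,t)$, and the Carleson condition on $|G|^2\,dy\,dt/t$ alone does not give this for $p<2$. Indeed, take $G(y,t)=1$ for $y\in Q_0$, $0<t<\ell(Q_0)$ and $G=0$ otherwise; then $|G|^2\,dy\,dt/t$ is Carleson, yet $\mathcal{A}_2(G\,\mathcal{Q}_t\1_{Q_0})(x)=+\infty$ for $x$ in the interior of~$Q_0$. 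One can repair this piece by using the uniform local $L^2$ bound~\eqref{A:eqn:S:infty} on~$G$ inside an $H^1\to L^1$ atomic argument, but that is not the ``standard Coifman--Meyer--Stein tent-space argument'' you invoke. Your Calder\'on--Zygmund sketch for $R_t^2$ is in better shape, though the intermediate regime $\ell(Q)\ll t\ll |x-x_Q|$ requires combining the off-diagonal decay with the mean-zero Schwartz gain simultaneously, not sequentially, to get an integrable tail.

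The paper avoids these difficulties by \emph{not} splitting for $1<p<2$: it keeps the full operator $R_t h=t^{k+1}\partial_\perp^{k+m}\s^L_\nabla(\arr b\,\mathcal{Q}_t h)$, proves $H^1\to L^1$ directly on atoms, and interpolates. The device absent from your plan is formula~\eqref{A:eqn:S:variant:high}, which replaces $L$ by a higher-order operator~$\widetilde L$ with $2\widetilde m>\dmn$; by Theorem~\ref{A:thm:Meyers} the fundamental solution $E^{\widetilde L}$ then enjoys \emph{pointwise} bounds, so $R_t a$ can be written as an explicit integral against $\partial_r^k\partial_y^\varepsilon E^{\widetilde L}$, estimated pointwise on dyadic annuli around~$Q$, and combined with the Schwartz--cancellation bounds on $\mathcal{Q}_t a$ to obtain $\mathcal{A}_2(R_t a)(x)\le C\ell(Q)/|x-x_Q|^{\dmn}$ for $x\notin cQ$. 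This pointwise kernel control is what substitutes for the unavailable tent-space extrapolation below~$p=2$.
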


\begin{proof} By Remark~\ref{A:rmk:lower}, it suffices to work in the upper half-space.
Let $R_t h(x)=t^{k+1}\partial_\perp^{k+m}\s^L_\nabla (\arr b \mathcal{Q}_t h)(x,t)$.
Observe that
\begin{equation*}R_t h(x)
=
R_t^2(\arr b,h)(x)+
t^{k+1}\partial_t^{m+k}\s^L_\nabla\arr b(x,t)\, \mathcal{Q}_t h(x)
\end{equation*}
where $R_t^2$ is as in Lemma~\ref{A:lem:R:2}. Thus,
\begin{multline*}\int_{\R^n}\int_0^\infty \abs{R_t h(x)}^2\,\frac{1}{t}\,dt\,dx
\\
\leq
2\int_{\R^n}\int_0^\infty \abs{R_t^2(\arr b,h)(x)}^2\,\frac{dt}{t}\,dx
+
2\int_{\R^n}\int_0^\infty \abs{
t^{k}\partial_t^{m+k}\s^L_\nabla\arr b(x,t)\, \mathcal{Q}_t h(x)
}^2\,t\,dt\,dx
.\end{multline*}
If $k$ is large enough, then we may bound the term involving $R_t^2$ using Lemma~\ref{A:lem:R:2}, while by Lemma~\ref{A:lem:S:carleson:variant}, $\abs{t^k\partial_t^{m+k} \s^L_\nabla\arr b(x,t)}^2\,t\,dx\,dt$ is a Carleson measure and so we may bound the second term using the bound~\eqref{A:eqn:NQ:carleson} and Carleson's lemma.

Thus, we have
$L^2$ boundedness of $h\mapsto\mathcal{A}_2^+(R_t h)$. $L^p$ boundedness for $2<p<\infty$ follows from Remark~\ref{A:rmk:interpolation:carleson} and the decay estimate~\eqref{A:eqn:SQ:decay}.

We are left with the case $1<p<2$.
Let $H^1(\R^n)$ be the standard Hardy space. It is well known that interpolation between Hardy and Lebesgue spaces is valid; that is, if we can establish the estimate
\begin{equation}
\label{A:eqn:H1:bound}
\doublebar{\mathcal{A}_2^+ (R_th)}_{L^1(\R^n)}
\leq C \doublebar{h}_{H^1(\R^n)},
\end{equation}
then boundedness $L^p\mapsto L^p$ will be valid for $1<p<2$ by interpolation. See \cite[Section~5]{FefS72}.

One of the most useful properties of the Hardy spaces is the atomic decomposition. That is, to establish the bound~\eqref{A:eqn:H1:bound}, it suffices to show that
\begin{equation*}\doublebar{\mathcal{A}_2^+ (R_ta)}_{L^1(\R^n)}
\leq C \end{equation*}
whenever $a$ is supported in a cube $Q\subset\R^n$, $\doublebar{ a}_{L^2(\R^n)}=\doublebar{ a}_{L^2(Q)}\leq\abs{Q}^{-1/2}$, and $\int_Q a=0$. See~\cite{Coi74,Lat78,MedSV08}.

Choose some such~$a$ and~$Q$ and let $x_Q$ be the midpoint of~$Q$. Then
\begin{align*}
\int_{4Q}\mathcal{A}_2^+(R_t a)(x)\,dx
&\leq 4^{n/2}\abs{Q}^{1/2}\biggl(\int_{cQ}\mathcal{A}_2^+(R_t a)(x)^2\,dx\biggr)^{1/2}
\\&\leq C\abs{Q}^{1/2}\doublebar{a}_{L^2(\R^n)}
\leq C.\end{align*}

Now, let $x\in\R^n\setminus 4Q$, so $\abs{x-x_Q}\geq 2\ell(Q)$. Then
\begin{equation*}\mathcal{A}_2^+(R_t a)(x) = \biggl(\int_0^\infty \int_{\abs{x-z}<t} \abs{R_t a(z)}^2\,\frac{dz\,dt}{t^\dmn}\biggr)^{1/2}.\end{equation*}
Recall that
\begin{align*}
R_ta(z)
&=
	t^{k+1} \partial_\perp^{k+m}\s^L_\nabla (\arr b\mathcal{Q}_t a)(z,t)
=
	\sum_{\abs\alpha=m}
	t^{k+1} \partial_\perp^{k+m} \s^L_\nabla ( b_\alpha\mathcal{Q}_t a\,\arr e_\alpha)(z,t)
.\end{align*}
Let the matrix $\widetilde A$ and the constants $\kappa_\xi$ be as in formula~\eqref{A:eqn:coefficients:high}.
By formula~\eqref{A:eqn:S:variant:high},
\begin{align*}
R_ta(z)
&=
	\sum_{{\abs\alpha=m}}
	 t^{k+1} \partial_\perp^{k+m}
	\sum_{\substack{\abs\zeta=M}} \kappa_\zeta
	\partial^{2\zeta}
	\s^{\widetilde{L}}_\nabla \Bigl(
	\sum_{\abs\xi=M} \kappa_\xi b_\alpha\mathcal{Q}_t a\,\arr e_{\alpha+2\xi}\Bigr)(z,t)
.\end{align*}
Thus,
\begin{align*}\int_{\abs{x-z}<t} \abs{R_ta(z)}^2\,dz
&\leq
C\!
\sum_{{\abs\alpha=m}}
\sum_{{\abs\xi=M}}
\int_{\abs{x-z}<t}\abs[big]{
	 t^{k+1} \nabla^{\widetilde m} \partial_\perp^k
	\s^{\widetilde{L}}_\nabla \bigl(
	 b_\alpha\mathcal{Q}_t a\,\arr e_{\alpha+2\xi}\bigr)(z,t) }^2\,dz
\end{align*}
where $\widetilde m=m+2M$.
Let
\begin{equation*}
u_{t}^{\alpha,\xi}(z,r)
=t^{k+1-\widetilde m} \partial_{{r}}^k
	\s^{\widetilde{L}}_\nabla \bigl(
	 b_\alpha\mathcal{Q}_t a\,\arr e_{\alpha+2\xi}\bigr)(z,r).\end{equation*}
Observe that $\widetilde Lu_{t}^{\alpha,\xi}=0$ in $\R^\dmn_\pm$. Thus, by Lemma~\ref{A:lem:slices} and the Caccioppoli inequality,
\begin{align*}\int_{\abs{x-z}<t} \abs{R_ta(z)}^2\,dz
&\leq
C
\sum_{{\abs\alpha=m}}
\sum_{{\abs\xi=M}}
\int_{\abs{x-z}<t} \abs[big]{
	 t^{\widetilde m} \nabla^{\widetilde m} u_{t}^{\alpha,\xi}(z,t) }^2\,dz
\\&\leq
C
\sum_{{\abs\alpha=m}}
\sum_{{\abs\xi=M}}
\int_{\abs{x-z}<2t} \fint_{t/2}^{2t} \abs[big]{u_{t}^{\alpha,\xi}(z,r) }^2\,dr\,dz
.\end{align*}
Thus,
for any multiindices $\alpha$, $\xi$ and $\varepsilon=\alpha+2\xi$,
we wish to bound
$u_{t}(z,r)=u_{t}^{\alpha,\xi}(z,r) $
for $\abs{z-x}<2t$ and $t/2<r<2t$.

Now, observe that by formula~\eqref{A:eqn:S:variant},
\begin{align*}
u_{t}(z,r)
&=
	t^{k+1-\widetilde m}
	\int_{\R^n}\partial_r^k\partial_{y,{s}}^\varepsilon E^{\widetilde L}(z,r,y,0)\, b_\alpha(y)\,\mathcal{Q}_t a(y)\,dy
\end{align*}
where $\abs\varepsilon=\widetilde m$ and $\widetilde L$ is an elliptic operator of order~$2\widetilde m$.

Recall that $a$ is supported in the cube $Q\subset\R^n$. Let $A_j(Q)$ be as in formula~\eqref{A:eqn:annuli}. Let $J$ be such that $x\in A_J$; then $2^{{J-1}}\ell(Q)\leq \abs{x-x_Q}\leq \sqrt{n}\, 2^J\ell(Q)$. Recall that $x\in \R^n\setminus 4Q$, and so we need only consider $J\geq 2$. Define
\begin{equation*}U_j =U_j(z,r)= t^{k-\widetilde m+1}\int_{A_j(Q)} \abs{\partial_r^k\partial_{y,{s}}^\varepsilon E^{\widetilde L}(z,r,y,0)} \,dy.\end{equation*}
We then have that
\begin{equation*}\abs{u_{t}(z,r)} \leq \doublebar{\arr b}_{L^\infty(\R^n)}\sum_{j=0}^\infty U_j(z,r) \doublebar{\mathcal{Q}_t a}_{L^\infty(A_j(Q))}.\end{equation*}

We now establish bounds on~$U_j$. We choose $M$ large enough that $2\widetilde m>\dmn$, and so vertical derivatives of $E^{\widetilde L}$ are pointwise bounded by Theorem~\ref{A:thm:Meyers}. We assume $k\geq \widetilde m$.

By Lemma~\ref{A:lem:slices}, the Caccioppoli inequality, the bound~\eqref{A:eqn:fundamental:far}, and Theorem~\ref{A:thm:Meyers},
if $r>0$, then
\begin{multline*}
\int_{A_j(Q)} \abs{\partial_r^k\partial_{y,{s}}^\varepsilon E^{\widetilde L}(z,r,y,0)} \,dy
\\\leq
	C (2^j\ell(Q))^{(n-1)/2} (r+\dist(z,A_j(Q)))^{-\pdmn/2}
	\\\times
	\max(2^j\ell(Q),r+\dist(z,A_j(Q)))^{\widetilde m-k}.
\end{multline*}
We will consider the cases $j\geq J+2$, $j\leq J-2$ and $\abs{J-j}\leq 1$ separately.

We begin with $j\geq J+2$. If $\abs{x-z}<2t$ and $t/2<r<2t$, then
\begin{equation*} U_j(z,r) \leq C t^{k+1-\widetilde m} (2^j\ell(Q))^{(n-1)/2} (t+2^j\ell(Q))^{\widetilde m-k-\pdmn/2}
\qquad\text{ for all }j\geq J+2. \end{equation*}

We now consider the cases $j\leq J+1$.
If $t>\abs{x-x_Q}$, let $2^{J'}\ell(Q)\leq 4t\leq 2^{J'+1}\ell(Q)$; because $\abs{x-x_Q}>2^{J-1}\ell(Q)$, we have that $J'\geq J+1$. Then
\begin{equation*}\sum_{j=0}^{J'} U_j(z,r) \leq C .\end{equation*}

If $0 < t\leq \abs{x-x_Q}$, we bound $U_j$ differently. First, observe that
\begin{equation*}\sum_{j=0}^{J-2} U_j(z,r) \leq C t^{k+1-\widetilde m} \abs{x-x_Q}^{\widetilde m-k-1}.\end{equation*}

We have bound $U_j$ for $j\geq J+2$; we are left with $U_{J}$ and $U_{J\pm 1}$.

Let $\Delta_0 = B(x,2t)\subset\R^n$ and $\Delta_\ell=B(x,2^{\ell+1}t)\setminus B(x,2^\ell t)$. We remark that $\Delta_\ell$ differs from $A_j(Q)$ in that $\Delta_\ell$ is centered about $x$ rather than $x_Q$. As before, using Lemma~\ref{A:lem:slices}, the Caccioppoli inequality, the bound~\eqref{A:eqn:fundamental:far}, and Theorem~\ref{A:thm:Meyers}, we may show that
\begin{equation*}U_{J-1}+U_{J}+U_{J+1} \leq \sum_{\ell=0}^\infty t^{k-\widetilde m+1}\int_{\Delta_\ell} \abs{\partial_r^k\partial_{y,{s}}^\varepsilon E^{\widetilde L}(z,r,y,0)} \,dy\leq C
\end{equation*}
provided $k\geq \widetilde m$.

We now establish bounds on ${\mathcal{Q}_t a}$.
Because $\mathcal{Q}_t$ denotes convolution with~$\eta_t$, and $\eta\in L^2(\R^n)$, we have that
\begin{equation*}\doublebar{\mathcal{Q}_t a}_{L^\infty(\R^n)}\leq
Ct^{-n/2}\doublebar{a}_{L^2(\R^n)}\leq \frac{C}{t^{n/2}\sqrt{\abs{Q}}}.\end{equation*}
Because
$\mathcal{Q}_t$ is an approximate identity with a Schwartz kernel~$\eta_t$, and because $\int a=0$, we have that
\begin{equation*}\doublebar{\mathcal{Q}_t a}_{L^\infty(\R^n)}\leq C\diam(\supp a)\doublebar{\nabla \eta_t}_{L^\infty(\R^n)}\doublebar{a}_{L^1(\R^n)}\leq C\frac{\ell(Q)}{t^{n+1}}.\end{equation*}
Furthermore, if $N>0$ is an integer and $j\geq 1$ then
\begin{equation*}
\doublebar{\mathcal{Q}_t a}_{L^\infty(A_j(Q))} \leq C_N\frac{\ell(Q)}{t^{n+1}} \biggl(\frac{t}{2^j\ell(Q)}\biggr)^N
.\end{equation*}
Thus,
\begin{align*}
\abs{u_{t}(z,r)}
&\leq
	C\sum_{j=0}^\infty U_j(z,r) \doublebar{\mathcal{Q}_t a}_{L^\infty(A_j(Q))}
\\&\leq
	C\min\biggl(\frac{1}{ t^{n/2}\sqrt{\abs{Q}}}, \frac{\ell(Q)}{t^{n+1}}\biggr)
	\min\biggl(1, \frac{t}{\abs{x-x_Q}}\biggr)^{k+1-\widetilde m}
	\\&\qquad+
	C_{N_1}\frac{\ell(Q)}{t^{n+1}} \biggl(\frac{t}{\abs{x-x_Q}}\biggr)^{N_1}
	\\&\qquad+
	C_{N_2}\sum_{j=J+2}^\infty
	\frac{\ell(Q)}{t^{n+1}} \biggl(\frac{t}{2^j\ell(Q)}\biggr)^{N_2}
	\frac{t^{k+1-\widetilde m} (2^j\ell(Q))^{(n-1)/2} }{(t+2^j\ell(Q))^{k+1-\widetilde m+(n-1)/2}}
.\end{align*}
If $t<\ell(Q)$, then by letting $N_1=k+2-\widetilde m+n$ and $N_2=n+1$, we see that
\begin{align*}
\abs{u_{t}(z,r)}
&\leq
	\frac{C}{\sqrt{\abs Q}}\frac{t^{k+1-\widetilde m-n/2}}{ \abs{x-x_Q}^{k+1-\widetilde m} }
.\end{align*}
If $\ell(Q)<t<\abs{x-x_Q}$, then by letting $N_1=k+1-\widetilde m$ and $N_2=0$, we see that
\begin{align*}
\abs{u_{t}(z,r)}
&\leq
	\frac{C\ell(Q)t^{k-\widetilde m-n} }{ \abs{x-x_Q}^{k+1-\widetilde m}}
.\end{align*}

If $t>\abs{x-x_Q}$, then we bound $u_t(z,r)$ differently, by writing
\begin{align*}
\abs{u_{t}(z,r)}
&\leq
	\frac{C\ell(Q)}{t^{n+1}}\sum_{j=0}^{J'} U_j
	+
	 C_N\frac{\ell(Q)}{t^{n+1}} \sum_{j=J'}^\infty
	\biggl(\frac{t}{2^j\ell(Q)}\biggr)^{N+k+1-\widetilde m}
\end{align*}
and choosing $N\geq \widetilde m-k$ we see that $\abs{u_{t}(z,r)} \leq {C\ell(Q)}/{t^{n+1}}$.

Thus, we have that if $k$ is large enough, then for all $x\notin cQ$,
\begin{align*}
\mathcal{A}_2^+(R_t a)(x)
&\leq
	C\max_{\alpha,\xi}\biggl(\int_0^\infty \sup_{\abs{z-x}<2t,\>t/2<s<2t} \abs{u_{t}^{\alpha,\xi}(z,r)}^2\frac{dt}{t}\biggr)^{1/2}
\\&\leq
	\biggl(\int_0^{\ell(Q)} \frac{1}{\abs{Q}}\biggl(\frac{Ct^{k+1-\widetilde m-n/2}}{\abs{x-x_Q}^{k+1-\widetilde m}}\biggr)^2
	\frac{dt}{t} \biggr)^{1/2}
	\\&\qquad+
	\biggl(\int_{\ell(Q)}^{\abs{x-x_Q}} \biggl(\frac{C\ell(Q)t^{k-\widetilde m-n}}{\abs{x-x_Q}^{k+1-\widetilde m}}\biggr)^2\frac{dt}{t}\biggr)^{1/2}
	\\&\qquad+
	\biggl(\int_{\abs{x-x_Q}}^\infty \biggl(\frac{C\ell(Q)}{t^{n+1}}\biggr)^2\frac{dt}{t}\biggr)^{1/2}
\\&\leq \frac{C\ell(Q)}{\abs{x-x_Q}^\dmn}.
\end{align*}
This is in $L^1(\R^n\setminus 4Q)$, and so $ h\mapsto \mathcal{A}_2^+(R_t h)$ is bounded $H^1\mapsto L^1$. By interpolation, it is bounded $L^p\mapsto L^p$ for any $1<p<\infty$, as desired.
\end{proof}

\section{Bounds on \texorpdfstring{$\s^L$}{the single layer potential} for \texorpdfstring{$1<p<2$}{1<p<2} in low dimensions}
\label{A:sec:atomic}

In Sections~\ref{A:sec:S:vertical} and~\ref{A:sec:S:vertical:variant}, we established the area integral estimates \eqref{A:eqn:lusin:p:intro} and~\eqref{A:eqn:lusin:p:variant:intro} for $2\leq p<\infty$. In low dimensions we can extend the bound~\eqref{A:eqn:lusin:p:intro} below $p=2$.

\begin{lem}\label{A:lem:atomic} Let $L$, $m$, and $n$ be as in Lemma~\ref{A:lem:lusin:S}.
Suppose that the De Giorgi-Nash-Moser type estimate
\begin{equation}\label{A:eqn:DGNM} \sup_{\substack{X,Y\in B(X_0,r)\\X\neq Y}} \frac{\abs{\nabla^{m-1}u(X)-\nabla^{m-1}u(Y)}}{\abs{X-Y}^\varepsilon} \leq \frac{H}{r^\varepsilon} \biggl(\fint_{B(X_0,2r)} \abs{\nabla^{m-1} u}^2\biggr)^{1/2}\end{equation}
is valid in all balls $B(X_0,r)\subset\R^\dmn$ and for all $u\in \dot W^2_m(B(X_0,2r))$ with $Lu=0$ in $B(X_0,2r)$,
for some positive constants $H$, $\varepsilon>0$ that depend only on $L$ (and not on $u$, $X$, $Y$, $X_0$, or~$r$). Then the area integral estimates \eqref{A:eqn:lusin:atomic} and~\eqref{A:eqn:lusin:atomic:k} are valid for $1<p<2$ and $k\geq 2$.

If the ambient dimension $\dmn$ satisfies either $\dmn=2$ or $\dmn=3$, then the estimate~\eqref{A:eqn:DGNM} is valid, and thus so are
the area integral estimates \eqref{A:eqn:lusin:atomic} and~\eqref{A:eqn:lusin:atomic:k}.
\end{lem}

\begin{proof}
We begin by showing that the bound \eqref{A:eqn:DGNM} is valid whenever $\dmn=2$ or $\dmn=3$.
By Theorem~\ref{A:thm:Meyers}, there is some $p_L^+>2$ such that, if $0<q<p_L^+$ and if $Lu=0$ in $B(X_0,2r)$, then $\nabla^m u\in L^q(B(X_0,r))$. If $\dmn=2$, then by Morrey's inequality, $\nabla^{m-1} u$ is H\"older continuous; furthermore, by these theorems and the Caccioppoli inequality we have that
\begin{equation*}\sup_{\substack{X,Y\in B(X_0,r)\\X\neq Y}} \frac{\abs{\nabla^{m-1} u(X)-\nabla^{m-1} u(Y)}}{\abs{X-Y}^{1-2/q}}\leq \frac{C}{r^{1-2/q}}\biggl(\fint_{B(X_0,2r)}\abs{\nabla^{m-1} u}^2\biggr)^{1/2}. \end{equation*}

The next argument is essentially that of \cite[Appendix~B]{AlfAAHK11}. By Lemma~\ref{A:lem:slices}, if $\mat A$ is $t$-independent then there is some $q>2$ such that, if $Lu=0$ in $2Q\times (t-\ell(Q),t+\ell(Q))$, then $\nabla^m u\in L^q(Q\times\{t\})$. If $\dmn=3$, then $n=2$ and $Q\subset\R^2$, so by Morrey's inequality
\begin{equation*}\sup_{\substack{x,y\in Q\\x\neq y}} \frac{\abs{\nabla^{m-1} u(x,t)-\nabla^{m-1} u(y,t)}}{\abs{x-y}^{1-2/q}}\leq C\ell(Q)^{2/q}\biggl(\fint_{2Q}\fint_{t-\ell(Q)}^{t+\ell(Q)} \abs{\nabla^m u}^2\biggr)^{1/2}. \end{equation*}
We may use the Caccioppoli inequality and Lemma~\ref{A:lem:slices} to bound $\nabla^{m-1}\partial_\dmn u$; thus, we have that if $\dmn=3$ then
\begin{equation*}\sup_{\substack{X,Y\in B(X_0,r)\\X\neq Y}} \frac{\abs{\nabla^{m-1} u(X)-\nabla^{m-1} u(Y)}}{\abs{X-Y}^{1-2/q}}\leq \frac{C}{r^{1-2/q}}\biggl(\fint_{B(X_0,2r)}\abs{\nabla^{m-1} u}^2\biggr)^{1/2}. \end{equation*}

Thus, if $\dmn=2$ or $\dmn=3$, then the bound~\eqref{A:eqn:DGNM} is valid with $\varepsilon=1-2/q$, where $2<q<p^+_L$.

We now establish the estimates \eqref{A:eqn:lusin:atomic} and~\eqref{A:eqn:lusin:atomic:k} for $1<p<2$ and $k\geq 2$.

Let $\dmn\geq 2$ and suppose that the bound~\eqref{A:eqn:DGNM} is valid.
By the bound \eqref{A:eqn:S:square} (and the Caccioppoli inequality), the two bounds are valid for $p=2$. As in the proof of Lemma~\ref{A:lem:S:Schwartz:p}, let $H^1(\R^n)$ be the Hardy space; it suffices to show that
\begin{align*}
\doublebar{\mathcal{A}_2^\pm (\abs{t}\,\nabla^{m-1} \partial_t^{2} \s^L (a\arr e_\gamma))}_{L^1(\R^n)}
&\leq C ,
\\
\doublebar{\mathcal{A}_2^\pm (\abs{t}^k\,\nabla^m \partial_t^{k} \s^L (a\arr e_\gamma))}_{L^1(\R^n)}
&\leq C(k)
\end{align*}
whenever $\abs\gamma=m-1$ and whenever $a$ is an $H^1$-atom.

By Remark~\ref{A:rmk:lower} it suffices to consider $\mathcal{A}_2^+$.
Choose some $H^1$ atom $a$ and some multiindex $\gamma$ with $\abs\gamma=m-1$, and let $\arr a=a\arr e_\gamma$.
By H\"older's inequality,
\begin{align*}\int_{16Q} \mathcal{A}_2^+(t^k\nabla^m\partial_t^k\s^L\arr a)
&\leq 4\abs{Q}^{1/2}\biggl(\int_{\R^n} \mathcal{A}_2^+(t^k\nabla^m\partial_t^k\s^L\arr a)^2\biggr)^{1/2}
\\&=4\abs{Q}^{1/2}\biggl(\int_{\R^n}\int_0^\infty t^{2k-1}\abs{\nabla^m\partial_t^k\s^L\arr a(x,t)}^2\,dt\,dx\biggr)^{1/2}
.\end{align*}
Applying the Caccioppoli inequality in Whitney cubes, we see that if $k\geq 1$ then
\begin{align*}\int_{16Q} \mathcal{A}_2^+(t^k\nabla^m\partial_t^k\s^L\arr a)
&\leq C\abs{Q}^{1/2}\biggl(\int_{\R^n}\int_0^\infty \abs{\nabla^m\partial_t\s^L\arr a(x,t)}^2t\,dt\,dx\biggr)^{1/2}
\end{align*}
and so by the bound~\eqref{A:eqn:S:square},
\begin{align*}\int_{16Q} \mathcal{A}_2^+(t^k\nabla^m\partial_t^k\s^L\arr a)
&\leq C\abs{Q}^{1/2}\doublebar{\arr a}_{L^2(\R^n)}
\leq C.\end{align*}

We want to bound $\mathcal{A}_2^+(t^k\nabla^m\partial_t^{k}\s^L\arr a)(z)$ or $\mathcal{A}_2^+(\nabla^{m-1}\partial_t^2\s^L\arr a)(z)$ for $z\notin 16Q$. By formula~\eqref{A:eqn:S:fundamental},
\begin{equation*}\nabla^{m-1}\partial_t^{k}\s^L\arr a(x,t)
=
\int_Q \nabla_{x,t}^{m-1}\partial_t^{k} \partial_{y,s}^\gamma E^L(x,t,y,0)\,a(y) \,dy
.\end{equation*}
Combining the bound \eqref{A:eqn:DGNM} with the estimate~\eqref{A:eqn:fundamental:far}, we see that if $k\geq 1$ then
\begin{equation*}\abs{\nabla_{x,t}^{m-1}\partial_{t}^{k} \nabla^{m-1}_{y,s} E^L(x,t,y,0)}\leq \frac{C}{(\abs{x-y}+\abs t)^{n+k-1}}
\end{equation*}
and that if $\abs{y_1-y_2}<\abs{x-y_1}/2+\abs{t}/2$, then
\begin{multline*}\abs{\nabla_{x,t}^{m-1}\partial_{t}^{k} \nabla^{m-1}_{y,s} E^L(x,t,y_1,0)-\nabla_{x,t}^{m-1}\partial_{t}^{k} \nabla^{m-1}_{y,s} E^L(x,t,y_2,0)}
\\\leq
\frac{C\abs{y_1-y_2}^\varepsilon}{(\abs{x-y_1}+\abs t)^{n+k-1+\varepsilon}}
.\end{multline*}
Thus, using the cancellation properties of the atom~$a$, we have that
\begin{equation*}
\abs{\nabla^{m-1}\partial_{t}^{k}\s^L\arr a(x,t)}
\leq \frac{C \ell(Q)^\varepsilon}{(\abs t+\dist(x,Q))^{n+k-1+\varepsilon}}
.\end{equation*}
Applying the Caccioppoli inequality in Whitney cubes, we see that if $k\geq 2$ then
\begin{align*}\mathcal{A}_2^+(t^k\nabla^{m}\partial_{t}^{k}\s^L(a\arr e_\gamma))(z)
&\leq \biggl(\int_0^\infty \int_{\abs{x-z}<2t}
t^{2}\abs{\nabla^{m-1}\partial_{t}^2\s^L(a\arr e_\gamma)(x,t)}^2 \,\frac{dx\,dt}{t^\dmn}\biggr)^{1/2}
.\end{align*}
Clearly $\mathcal{A}_2^+(t\nabla^{m-1}\partial_{t}^2\s^L(a\arr e_\gamma))(z)$ is controlled by the right-hand side as well. But
\begin{multline*}
\biggl(\int_0^\infty \int_{\abs{x-z}<2t}
t^{2}\abs{\nabla^{m-1}\partial_{t}^2\s^L(a\arr e_\gamma)(x,t)}^2 \,\frac{dx\,dt}{t^\dmn}\biggr)^{1/2}
\\\leq
\biggl(\int_0^\infty \int_{\abs{x-z}<2t} \frac{C t^2 \ell(Q)^{2\varepsilon}}{( t+\dist(x,Q))^{2n+2\varepsilon}}\,\frac{dx\,dt}{t^\dmn}\biggr)^{1/2}
\leq \frac{C\ell(Q)^\varepsilon}{\dist(z,Q)^{n+\varepsilon}}
\end{multline*}
which is in~$L^1(\R^n\setminus 16Q)$ with norm independent of~$Q$, as desired.
\end{proof}

\section{Bounds on \texorpdfstring{$\s^L$}{the single layer potential} and \texorpdfstring{$\s^L_\nabla$}{the modified single layer potential} for \texorpdfstring{$2-\varepsilon<p<2$}{p<2}}
\label{A:sec:p-}

In this section we conclude the paper by proving the bounds~\eqref{A:eqn:S:p-} and~\eqref{A:eqn:S:p-:variant}. We hope to prove these bounds in the dual range $2<p<2+\varepsilon$ in a future paper.

\begin{thm} \label{A:thm:S:p-} Let $L$, $m$, and $n$ be as in Lemma~\ref{A:lem:lusin:S}. Then there is some $\varepsilon>0$ depending only on the standard constants $m$, $n$, $\lambda$ and~$\Lambda$ such that the bounds
\begin{align*}
\doublebar{\mathcal{A}_2(t\,\nabla^m \partial_t\s^L\arr g)}_{L^p(\R^n)}
\leq C(p)\doublebar{\arr g}_{L^p(\R^n)}
,\\
\doublebar{\mathcal{A}_2(t\,\nabla^m \s^L_\nabla\arr h)}_{L^p(\R^n)}
\leq C(p)\doublebar{\arr h}_{L^p(\R^n)}
\end{align*}
are valid for all $2-\varepsilon< p<2$.
\end{thm}

\begin{proof}
By formula~\eqref{A:eqn:S:S:vertical} and by density it suffices to consider only $\s^L_\nabla \arr h$ with $\arr h\in L^2(\R^n)\cap L^p(\R^n)$. Furthermore, by formula~\eqref{A:eqn:S:variant:high} we may assume $2m>\dmn$.

Let $T_2^p=\{\psi:\mathcal{A}_2\psi\in L^p(\R^n)\}$
with the natural norm. By \cite[Theorem~2, Section~5]{CoiMS85}, if $1<p<\infty$ then under the inner product
\begin{equation*}\langle f,g\rangle = \int_{\R^\dmn_+} f(x,t)\,g(x,t)\,\frac{dx\,dt}{t}\end{equation*}
the dual space to $T_2^p$ is $T_2^{p'}$, where $1/p+1/p'=1$.
Thus,
\begin{equation*}\doublebar{\mathcal{A}_2(t\nabla^m \s^L_\nabla\arr h)}_{L^p(\R^n)}
=
\sup_{0\neq\arr \Psi\in T_2^{p'}} \frac{1}{\doublebar{\mathcal{A}_2(\arr \Psi)}_{L^{p'}(\R^n)}} \abs[bigg]{\int_{\R^\dmn_+}\langle \arr \Psi,\nabla^m \s^L_\nabla\arr h\rangle}
.\end{equation*}
In fact, we may take the supremum over $\arr\Psi$ bounded and with support compactly contained in~$\R^\dmn_+$.

For such~$\arr \Psi$, by formula~\eqref{A:eqn:S:variant}
\begin{align*}\langle \arr \Psi,\nabla^m \s^L_\nabla\arr h\rangle_{\R^\dmn_+}
&=\int_{\R^\dmn_+}\langle \arr \Psi(x,t),\nabla^m \s^L_\nabla\arr h(x,t)\rangle \,dx\,dt
\\&=
\sum_{\abs\alpha=\abs\beta=m}\int_{\R^\dmn_+}
\int_{\R^n} \partial_{x,t}^\beta \partial_{y,s}^\alpha E^L(x,t,y,0)\,h_\alpha(y)\,dy \, \overline{\Psi_\beta(x,t)} \,dx\,dt
\end{align*}
and by formula~\eqref{A:eqn:fundamental:symmetric}
\begin{align*}\langle \arr\Psi, \nabla^m \s^L_\nabla\arr h\rangle_{\R^\dmn_+}
&=
\sum_{\abs\alpha=\abs\beta=m}\int_{\R^n} \int_{\R^\dmn_+}
\overline{\partial_{x,t}^\beta \partial_{y,s}^\alpha E^{L^*}(y,0,x,t) \Psi_\beta(x,t)} \,dx\,dt\,h_\alpha(y)\,dy
\end{align*}
and so by formula~\eqref{A:eqn:fundamental:2}
\begin{align*}\langle \arr\Psi, \nabla^m \s^L_\nabla\arr h\rangle_{\R^\dmn_+}
&=
\sum_{\abs\alpha=m}\int_{\R^n}\! \partial^\alpha
\overline{\Pi^{L^*}(\1_+\arr\Psi)(y,0)}
\,h_\alpha(y)\,dy
=\langle \Tr_m^+\Pi^{L^*}(\1_+\arr\Psi),\arr h\rangle_{\R^n}
\end{align*}
where $\1_+\arr\Psi$ denotes the extension of $\arr\Psi$ by zero from $\R^\dmn_+$ to $\R^\dmn$.
Thus,
\begin{equation*}\doublebar{\mathcal{A}_2(t\nabla^m \s^L_\nabla\arr h)}_{L^p(\R^n)}
=
\sup_{\arr\Psi} \frac{1}{\doublebar{\mathcal{A}_2(\arr \Psi)}_{L^{p'}(\R^n)}} \abs{\langle \Tr_m^+\Pi^{L^*}(\1_+\arr\Psi),\arr h\rangle_{\R^n}}
\end{equation*}
and so we have reduced matters to proving the estimate
\begin{equation}\label{A:eqn:trace:newton}
\doublebar{\Tr_m^+\Pi^{L^*}(\1_+\arr\Psi)}_{L^{p'}(\R^n)}\leq C_p{\doublebar{\mathcal{A}_2\arr \Psi}_{L^{p'}(\R^n)}}\end{equation}
for all $2<p'<2+\varepsilon$.

By Theorem~\ref{A:thm:S:square:variant} and the above duality results, the bound~\eqref{A:eqn:trace:newton} is valid for $p'=2$.
We will apply the following lemma.
\begin{lem}[{\cite[Lemma~3.2]{Iwa98}}] \label{A:lem:iwaniec} Suppose that $g$, $h\in L^q(\R^n)$ are nonnegative real-valued functions, $1<q<\infty$, and that for some $C_0>0$ and for all cubes $Q\subset\R^n$,
\begin{equation*}\biggl(\fint_Q g^q\biggr)^{1/q}\leq C_0\fint_{4Q}g+\biggl(\fint_{4Q} h^q\biggr)^{1/q}.\end{equation*}
Then there exist numbers $s>q$ and $C>0$ depending on $n$, $q$ and $C_0$ such that
\begin{equation*}\int_{\R^n} g^s\leq C\int_{\R^n} h^s.\end{equation*}
\end{lem}

Let $g=\abs{\Tr_m\Pi^{L^*}(\1_+\arr \Psi)}$ and let
\begin{equation*}h(z)=\mathcal{C}\arr\Psi(z)=\sup_{Q\owns z} \biggl( \frac{1}{\abs{Q}} \int_Q \int_0^{\ell(Q)} \abs{\arr \Psi(x,t)}^2\,\frac{dt\,dx}{t}\biggr)^{1/2}\end{equation*}
where the supremum is taken over cubes $Q\subset\R^n$ with $z\in Q$.
By \cite[Theorem 3, Section~6]{CoiMS85}, if $2<p'<\infty$ then $\doublebar{\mathcal{A}_2 H}_{L^{p'}(\R^n)}\approx \doublebar{\mathcal{C} H}_{L^{p'}(\R^n)}$. We claim that $g$ and $h$ satisfy the conditions of the lemma with $q=2$; there is then some $s>2$ such that
\begin{equation*}
\doublebar{\Tr_m^+\Pi^{L^*}\arr\Psi}_{L^s(\R^n)}
\leq
C \doublebar{\mathcal{C}\arr \Psi}_{L^s(\R^n)}
\leq
C{\doublebar{\mathcal{A}_2\arr \Psi}_{L^s(\R^n)}}
\end{equation*}
and so the bound \eqref{A:eqn:trace:newton} is valid for $p'=s$. (By interpolation it is valid for all $2<p'<s$ as well.)

We now prove the claim.

For notational convenience we will let $\arr\Psi=\1_+\arr\Psi$.
Choose some cube $Q\subset\R^n$. Let $R_j=2^{j}Q\times(0,2^{j}\ell(Q))$, and let $\arr\Psi_0=\arr\Psi\1_{R_2}$.

Then
\begin{equation*}\biggl(\fint_Q \abs{\Tr_m \Pi^{L^*} \arr \Psi}^2\biggr)^{1/2}
\leq
\biggl(\fint_Q \abs{\Tr_m \Pi^{L^*} \arr \Psi_0}^2\biggr)^{1/2}
+\biggl(\fint_Q \abs{\Tr_m \Pi^{L^*} (\arr \Psi-\arr\Psi_0)}^2\biggr)^{1/2}
.\end{equation*}
Because formula~\eqref{A:eqn:trace:newton} is valid for $p'=2$, we have that
\begin{align*}
\biggl(\int_{\R^n} \abs{\Tr_m \Pi^{L^*} \arr \Psi_0}^2\biggr)^{1/2}
&\leq
	C\doublebar{\mathcal{A}_2(\arr\Psi_0)}_{L^2(\R^n)}
\\&=
	 C\abs{Q}^{1/2}\biggl( \frac{1}{\abs{4Q}}\int_{4Q}\int_0^{4\ell(Q)} \abs{\arr\Psi(x,t)}^2\,\frac{dt\,dx}{t}\biggr)^{1/2}
.\end{align*}
By definition of~$\mathcal{C}$, the right hand side is at most $C\abs{Q}^{1/2}\mathcal{C}\arr\Psi(z)$ for all $z\in 4Q$,
and so
\begin{equation}
\label{A:eqn:Newton:local}
\biggl(\int_{\R^n} \abs{\Tr_m \Pi^{L^*} \arr \Psi_0}^2\biggr)^{1/2}
\leq
	C\abs{Q}^{1/2}\biggl(\fint_Q(\mathcal{C}\arr\Psi)^2\biggr)^{1/2}
.\end{equation}

We are left with the ${\Tr_m \Pi^{L^*} (\arr \Psi-\arr\Psi_0)}$ term.
Let $u=\Pi^{L^*} (\arr \Psi-\arr\Psi_0)$. Notice that $u$ is a solution to $L^*u=0$ in $4Q\times (-4\ell(Q),4\ell(Q))$ (and also in the lower half-space).
We will apply the following lemma.

\begin{lem}\label{A:lem:iterate} Let $L$ be as in Theorem~\ref{A:thm:S:p-}. Let $Q$ be a cube, let $u\in \dot W^2_m(3Q\times (-2\ell(Q),2\ell(Q)))$, and suppose that $Lu=0$ in $3Q\times (-2\ell(Q),2\ell(Q))$. Let $0\leq j\leq m$ and let $\arr c$ be a constant array. Then
\begin{align*}
\biggl(\fint_{2Q}\fint_{-\ell(Q)}^{\ell(Q)} \abs{\nabla^j u(x,t)}^2\,dt\,dx\biggr)^{1/2}
&\leq
C\fint_{3Q}\fint_{-2\ell(Q)}^{2\ell(Q)}\abs{\partial_t^j u(x,t)}\,dt\,dx
\\&\qquad+\frac{C}{\ell(Q)} \fint_{3Q} \abs{\nabla^{j-1} u(x,0)-\arr c}\,dx
.\end{align*}
In particular, by the Poincar\'e inequality and Lemma~\ref{A:lem:slices},
\begin{align*}
\biggl(\fint_{Q} \abs{\nabla^j u(x,0)}^2\,dx\biggr)^{1/2}
&\leq
C\fint_{3Q}\fint_{-2\ell(Q)}^{2\ell(Q)}\abs{\partial_t^j u(x,t)}\,dt\,dx
+\frac{C}{\ell(Q)} \fint_{3Q} \abs{\nabla^{j} u(x,0)}\,dx
.\end{align*}
\end{lem}

\begin{proof}
Let $0\leq k\leq j-1$. Let $\varepsilon>0$ be a small positive number and let $Q_k=(1+k\varepsilon)Q$. Observe that there is some polynomial $P$ of degree $j-1$ such that $\nabla^{j-1}P=\arr c$, and that $\tilde u=u-P$ is also a solution to $Lu=0$; thus, we need only prove the lemma in the case $\arr c=0$.

By the Caccioppoli inequality,
\begin{multline*}
\fint_{2Q_k} \fint_{-\ell(Q_k)}^{\ell(Q_k)} \abs{\nabla^{j-k}\partial_t^k u(x,t)}^2\,dt\,dx
\\\leq \frac{C(\varepsilon)}{\ell(Q)^2}\fint_{2Q_{k+1/2}} \fint_{-\ell(Q_{k+1/2})}^{\ell(Q_{k+1/2})} \abs{\nabla^{j-k-1}\partial_t^k u(x,t)}^2\,dt\,dx.\end{multline*}
By Theorem~\ref{A:thm:Meyers},
\begin{multline*}
\fint_{2Q_{k+1/2}} \fint_{-\ell(Q_{k+1/2})}^{\ell(Q_{k+1/2})} \abs{\nabla^{j-k-1}\partial_t^k u(x,t)}^2\,dt\,dx
\\\leq
C(\varepsilon)\biggl(\fint_{2Q_{k+1}} \fint_{-\ell(Q_{k+1})}^{\ell(Q_{k+1})} \abs{\nabla^{j-k-1} \partial_t^k u(x,t)}\,dt\,dx\biggr)^2
.\end{multline*}
If $x\in 2 Q_{k+1}$ and $-\ell(Q_{k+1})<t<\ell(Q_{k+1})$, then
\begin{align*}
\abs{\nabla^{j-k-1}\partial_\dmn^k u(x,t)}^2
&\leq
	\abs{
	\nabla^{j-k-1}\partial_\dmn^{k} u(x,t)-\nabla^{j-k-1}\partial_\dmn^{k} u(x,0)
	}
	\\&\qquad
	+ \abs{\nabla^{j-k-1}\partial_\dmn^{k} u(x,0)}
\\&\leq
	C\ell(Q)
	\fint_{-\ell(Q_{k+1})}^{\ell(Q_{k+1})}
	\abs{
	\nabla^{j-k-1}\partial_t^{k+1} u(x,s)
	}\,ds
	\\&\qquad
	+ \abs{\nabla^{j-k-1}\partial_\dmn^{k} u(x,0)}
.\end{align*}
Thus,
\begin{multline*}
\fint_{2Q_k} \fint_{-\ell(Q_k)}^{\ell(Q_k)} \abs{\nabla^{j-k}\partial_t^{k} u(x,t)}^2\,dt\,dx
\\\begin{aligned}
&
\leq
{C(\varepsilon)}
\biggl(\fint_{2Q_{k+1}}
	\fint_{-\ell(Q_{k+1})}^{\ell(Q_{k+1})}
	\abs{
	\nabla^{j-k-1} \partial_t^{k+1} u(x,t)} \,dt\,dx\biggr)^2
\\&\qquad+
\frac{C(\varepsilon)}{\ell(Q)^2}
\biggl(\fint_{2Q_{k+1}} \abs{\nabla^{j-k-1}\partial_\dmn^k u(x,0)}\,dx\biggr)^2
.\end{aligned}\end{multline*}
Iterating, we see that
\begin{align*}
\fint_{2Q_k} \fint_{-\ell(Q)}^{\ell(Q)} \abs{\nabla^{j} u(x,t)}^2\,dt\,dx
&
\leq
{C(\varepsilon)}
\biggl(\fint_{2Q_j}
	\fint_{-\ell(Q_j)}^{\ell(Q_j)}
	\abs{
	\partial_t^j u(x,t)
	}\,dt\,dx\biggr)^2
	\\&\qquad
	+
	\frac{C(\varepsilon)}{\ell(Q)^2}
	\biggl(\fint_{2Q_j} \abs{\nabla^{j-1} u(x,0)}\,dx\biggr)^2
.\end{align*}
Letting $\varepsilon=1/2j$ and so $2Q_j =3Q$ completes the proof.
\end{proof}

Recall that $u=\Pi^{L^*} (\arr \Psi-\arr\Psi_0)$. By Lemma~\ref{A:lem:iterate},
\begin{align*}
\biggl(\fint_Q \abs{\Tr_m \Pi^{L^*} (\arr \Psi-\arr\Psi_0)}^2\biggr)^{1/2}
&\leq
	C\fint_{3Q}\fint_{-2\ell(Q)}^{2\ell(Q)} \abs{\partial_\dmn^m \Pi^{L^*} (\arr \Psi-\arr\Psi_0)(x,t)}\,dt\,dx
	\\&\qquad +
	C\fint_{3Q} \abs{\Tr_m \Pi^{L^*} (\arr \Psi-\arr\Psi_0)}
.\end{align*}
It will be convenient to have an additional vertical derivative; thus,
{\multlinegap=0pt\begin{multline*}
\fint_{3Q}\fint_{-2\ell(Q)}^{2\ell(Q)} \abs{\partial_\dmn^m \Pi^{L^*} (\arr \Psi-\arr\Psi_0)(x,t)}\,dt\,dx
\\\begin{aligned}
&\leq
	\fint_{3Q} \abs{\partial_\dmn^m \Pi^{L^*} (\arr \Psi-\arr\Psi_0)(x,0)}\,dx
	\\&\qquad+
	\fint_{3Q}\fint_{-2\ell(Q)}^{2\ell(Q)} \abs[bigg]{\int_0^t\partial_s^{m+1} \Pi^{L^*} (\arr \Psi-\arr\Psi_0)(x,s)\,ds}\,dt\,dx
\\&\leq
	\fint_{3Q} \abs{\Tr_m \Pi^{L^*} (\arr \Psi-\arr\Psi_0)}
	+
	\fint_{3Q}\int_{-2\ell(Q)}^{2\ell(Q)} \abs{\partial_s^{m+1} \Pi^{L^*} (\arr \Psi-\arr\Psi_0)(x,s)}\,ds\,dx
.\end{aligned}\end{multline*}}
By the bound~\eqref{A:eqn:Newton:local},
\begin{equation*}\fint_{3Q} \abs{\Tr_m \Pi^{L^*} (\arr \Psi-\arr\Psi_0)}
\leq
\fint_{3Q} \abs{\Tr_m \Pi^{L^*} \arr \Psi}
+C\biggl(\fint_Q(\mathcal{C}\arr\Psi)^2\biggr)^{1/2}
.\end{equation*}
Thus,
\begin{align*}
\biggl(\fint_Q \abs{\Tr_m \Pi^{L^*} (\arr \Psi-\arr\Psi_0)}^2\biggr)^{1/2}
&\leq
	C\fint_{3Q}\int_{-2\ell(Q)}^{2\ell(Q)} \abs{\partial_t^{m+1} \Pi^{L^*} (\arr \Psi-\arr\Psi_0)(x,t)}\,dt\,dx
	\\&\qquad +
	C\fint_{3Q} \abs{\Tr_m \Pi^{L^*} \arr \Psi}
	+C\biggl(\fint_Q(\mathcal{C}\arr\Psi)^2\biggr)^{1/2}
.\end{align*}
We now contend with the vertical derivative. Recall that we assumed $2m>\dmn$. If $x\in 3Q$, $\abs{t}<2\ell(Q)$, then by formula~\eqref{A:eqn:fundamental:2},
\begin{align*}
\abs{\partial_t^{m+1} \Pi^{L^*} (\arr \Psi-\arr\Psi_0)(x,t)}
&=
	\abs[bigg]{\sum_{\abs\beta=m}\int_{\R^\dmn_+\setminus R_2}\partial_t^{m+1} \partial_{y,s}^\beta E^{L^*}(x,t,y,s)\,\Psi_\beta(y,s)\,dy\,ds }
.\end{align*}
By H\"older's inequality,
\begin{align*}
\abs{\partial_t^{m+1} \Pi^{L^*} (\arr \Psi-\arr\Psi_0)(x,t)}
&\leq
	\sum_{j=2}^\infty
	\biggl(\int_{R_{j+1}\setminus R_j} \abs{\partial_t^{m+1}\nabla_{y,s}^m E^L(x,t,y,s)}^2\,dy\,ds\biggr)^{1/2}
	\\&\qquad\times \biggl(\int_{R_{j+1}\setminus R_j} \abs{\arr\Psi(y,s)}^2\,dy\,ds \biggr)^{1/2}
\end{align*}
and by the bound \eqref{A:eqn:fundamental:far}, the Caccioppoli inequality and Theorem~\ref{A:thm:Meyers},
\begin{align*}
\abs{\partial_t^{m+1} \Pi^{L^*} (\arr \Psi-\arr\Psi_0)(x,t)}
&\leq
	C\sum_{j=2}^\infty
	\frac{1}{(2^j\ell(Q))^{n/2+3/2}}
	\biggl(\int_{R_{j+1}\setminus R_j} \abs{\arr\Psi(y,s)}^2\,dy\,ds \biggr)^{1/2}
.\end{align*}
By definition of~$\mathcal{C}$,
\begin{align*}
\frac{1}{(2^j\ell(Q))^{\pdmn/2}}
	\biggl(\int_{R_{j+1}\setminus R_j} \abs{\arr\Psi(y,s)}^2\,dy\,ds \biggr)^{1/2}
\leq
	2^{\pdmn/2} \mathcal{C}\arr\Psi(z)
\end{align*}
for all $z\in 2^{j+1}Q$.
Thus if $2m>\dmn$ then
\begin{align*}
\biggl(\fint_Q \abs{\Tr_m \Pi^{L^*} \arr \Psi(x)}^2\,dx\biggr)^{1/2}
&\leq
	C \fint_{3Q} \abs{\Tr_m \Pi^{L^*} \arr \Psi(x)}\,dx
	+
	C\biggl(\fint_{3Q}\mathcal{C}\arr\Psi^2\biggr)^{1/2}
\end{align*}
as desired.
\end{proof}

\subsection*{Acknowledgements}
We would like to thank the American Institute of Mathematics for hosting the SQuaRE workshop on ``Singular integral operators and solvability of boundary problems for elliptic equations with rough coefficients,'' and the Mathematical Sciences Research Institute for hosting a Program on Harmonic Analysis, at which many of the results and techniques of this paper were discussed.

\bibliographystyle{amsplain}
\bibliography{bibli}
\end{document}